\theoremstyle{definition}
\newtheorem{definition}{Definition}[section]
\newtheorem{remark}[definition]{Remark}
\newtheorem{example}[definition]{Example}
\newtheorem{examples}[definition]{Examples}
\newtheorem{warning}[definition]{Warning}
\theoremstyle{plain}
\newtheorem{theorem}[definition]{Theorem}
\newtheorem{lemma}[definition]{Lemma}
\newtheorem{corollary}[definition]{Corollary}
\renewcommand{\iff}{{\em\, i\,f\,{f}\quad}}
\newcommand{\bra}{\ensuremath{\langle}}
\newcommand{\ket}{\ensuremath{\rangle}}
\newcommand{\card}{\ensuremath{{\rm{card}}}}
\newcommand{\dom}{\ensuremath{{\rm{dom}}}}
\newcommand{\ran}{\ensuremath{{\rm{ran}}}}
\newcommand{\st}{\ensuremath{{{\rm{st}}}}}
\newcommand{\scf}{\mathcal{F}}
\newcommand{\sca}{\mathcal{A}}
\newcommand{\scu}{\mathcal{U}}
\newcommand{\scv}{\mathcal{V}}
\newcommand{\sci}{\mathcal{I}}
\newcommand{\scl}{\mathcal{L}}
\newcommand{\scs}{\mathcal{S}}
\newcommand{\poweri}{\mathcal{P}(I)}
\newcommand{\powerplus}{\mathcal{P}(\mathbb{R}_+)}
\newcommand{\rplus}{\mathbb{R}_+}
\newcommand{\crplus}{\mathbb{C}^{\mathbb{R}_+}}
\newcommand{\varep}{\varepsilon}
\newcommand{\mathr}{\mathbb{R}}
\newcommand{\mathn}{\mathbb{N}}
\newcommand{\mathq}{\mathbb{Q}}
\newcommand{\subeq}{\subseteq}
\newcommand{\mathc}{\mathbb{C}}
\newcommand{\polycstar}{\starc[x]}
\newcommand{\aep}{a_\varep}
\newcommand{\bep}{b_\varep}
\newcommand{\cep}{c_\varep}
\newcommand{\zep}{z_\varep}
\newcommand{\xep}{x_\varep}
\newcommand{\xdlt}{x_\delta}
\newcommand{\yep}{y_\varep}
\newcommand{\sep}{s_\varep}
\newcommand{\rep}{r_\varep}
\newcommand{\Sep}{\mathcal{S}_\varep}
\renewcommand{\star}{{^{*}}}
\newcommand{\starf}{{^{*}f}}
\newcommand{\starg}{{^{*}g}}
\newcommand{\starc}{{^{*}\mathc}}
\newcommand{\starrplus}{{^{*}\rplus}}
\newcommand{\starx}{{^{*}X}}
\newcommand{\starn}{{^{*}\mathn}}
\newcommand{\starr}{{^{*}\mathr}}
\newcommand{\stars}{{^{*}S}}
\newcommand{\stara}{{^{*}A}}
\newcommand{\starb}{{^{*}B}}
\renewcommand{\iff}{{\em\, i\,f\,{f}\quad}}
\newcommand{\anep}{a_{n\varep}}
\newcommand{\Aep}{A_{\varep}}
\newcommand{\Xep}{X_{\varep}}
\newcommand{\Xdlt}{X_{\delta}}
\newcommand{\bnep}{b_{n\varep}}
\newcommand{\amep}{a_{m\varep}}
\newcommand{\bmep}{b_{m\varep}}
\newcommand{\scistarc}{\sci(\starc)}
\newcommand{\scfstarc}{\scf(\starc)}
\newcommand{\epn}{\varep_{0}}
\newcommand{\nball}{B(s, \frac{1}{n})}
\newcommand{\starball}{{^*B(s, \frac{1}{n})}}
\begin{document}
\pagenumbering{roman}

\pagestyle{empty}
\Large
\noindent \textbf{Reduction of the Number of Quantifiers\\in Real Analysis through Infinitesimals}\\

\normalsize
\vspace{6.6in}
\noindent Ray Cavalcante\\             
California Polytechnic State University\\
San Luis Obispo, California\\
Adviser: Todor D. Todorov
\newpage

\pagestyle{fancy}
\begin{center}
\textbf{Authorization for Reproduction}\vspace{0.5in}
\end{center}

\begin{quote}
I grant permission for the reproduction of this thesis in its entirety or any of its parts, without further authorization from me.\vspace{0.5in}
\end{quote}

\begin{center}
\begin{tabular}{lcl}
\underline{\hspace{12em}} & & \underline{\hspace{12em}} \\
Signature & \hspace{6em} & Date\\
\end{tabular}
\end{center}
\newpage

\begin{center}
\textbf{Approval}\vspace{0.5in}
\end{center}

\begin{center}
\begin{tabular}{rl}
	\emph{Title:} & Reduction of the Number of Quantifiers in Real Analysis through Infinitesimals\\

	\emph{Name:} & Raymond Cavalcante\\

	\emph{Date:} & December 4, 2007
\end{tabular}\vspace{0.75in}
\end{center}

\begin{center}
\begin{tabular}{lcl}
Todor D. Todorov & & \underline{\hspace{12em}} \\
Adviser & \hspace{6em} & Signature\\
\vspace{0.25in}\\
Dylan Retsek & & \underline{\hspace{12em}} \\
Committee Member & \hspace{6em} & Signature\\
\vspace{0.25in}\\
Mark Stankus & & \underline{\hspace{12em}} \\
Committee Member & \hspace{6em} & Signature\\
\vspace{0.25in}\\
\end{tabular}
\end{center}
\newpage


\begin{center}
\textbf{Abstract}\\
\textbf{Reduction of the Number of Quantifiers in\\Real Analysis through Infinitesimals}\\
Raymond Cavalcante
\end{center}

\begin{quote}
We construct the non-standard complex (and real) numbers using the ultrapower method in the spirit of Cauchy's construction of the real numbers. We show that the non-standard complex numbers are a non-archimedean, algebraically closed field, and that the non-standard real numbers are a totally ordered, real-closed, non-archimedean field. We explore the various types of non-standard numbers, and develop the non-standard completeness results (Saturation Principle, Supremum Completeness of Bounded Internal Sets, etc) for $\starr$. We give non-standard characterizations for such usual topological objects as open, closed, bounded, and compact sets in terms of monads. We also consider such traditional topics of real analysis as limits, continuity, uniform continuity, convergence, uniform convergence, etc. in a non-standard setting. In both topology and real analysis we reduce (and in some cases eliminate) the number of quantifiers in the non-standard setting.
\end{quote}

\noindent \textbf{Keywords and phrases:} Non-Standard Analysis, Reduction of Quantifiers, Infinitesimals, Ultrafilter, Monads, Internal Sets, Transfer Principle, Saturation Principle, Spilling Principles, Real Analysis, Usual Topology on $\mathr$

\noindent \textbf{AMS Subject Classification:} 03C10, 03C20, 03C50, 03H05, 12L10, 26E35, 26A03, 26A06, 30G06

\newpage


\tableofcontents
\setcounter{tocdepth}{3}

\newpage

\pagenumbering{arabic}

\chapter{Introduction}

\section{Historical Discussion}
	
	When calculus was discovered by Leibniz in the late seventeenth century, he invented the notation $dx$ to mean an infinitely small change in the variable $x$. As to the existence of an infinitely small quantity, Leibniz explained:

	\begin{quote}
	It will be sufficient if, when we speak of infinitely great (or more strictly unlimited), or of infinitely small quantities (i.e. the very least of those within our knowledge), it is understood that we mean quantities that are indefinitely great or indefinitely small, i.e., as great as you please, or as small as you please, so that the error that one may assign may be less than a certain assigned quantity... (Goldblatt \cite{rGoldblatt} p.6)
	\end{quote}
	
	When Euler developed infinite series for logarithmic, exponential, and trigonometric functions, he frequently used the ideas of arbitrarily small and arbitrarily large (Goldblatt \cite{rGoldblatt} p.8).
	
	Though the idea of infinitely large and small numbers were used to great success by Leibniz, Newton, and Euler, the larger mathematical community was skeptical of Leibniz's explanation given above. In the late nineteenth century the work of Dedekind, Cantor, Cauchy, Balzano, and Weierstrass expunged infinitesimals from analysis and replaced them with the $\varep$-$\delta$ formulations which are widely taught today.
	
	In 1966, Abraham Robinson discovered the non-standard analysis \cite{aRob66}, providing a firm foundation for the infinitesimals which were banished in the late nineteenth century. The version of non-standard analysis developed by Robinson relies heavily on formal logic. The ultrapower formulation (constructionist approach) was discovered soon after by Luxemburg \cite{wLuxNotes} (See also Stroyan and Luxemburg \cite{StroLux76}). For a more contemporary presentation of the ultrapower method we refer to Lindst\o m \cite{tLin}.
	
	Whereas the ultrapower method in Luxemburg \cite{wLuxNotes}, Lindstr\o m \cite{tLin}, and Goldblatt \cite{rGoldblatt} utilizes the natural numbers $\mathn$ as the index set, we shall use $\rplus$ as the index set. Note that another ultrapower non-standard model (with different index set and different ultrafilter) was used in Guy Berger's thesis \cite{gBerger05} for studying delta-like solutions of Hopf's equation.
	
	It is our belief that the use of infinitesimals in analysis simplifies the presentation of key concepts such as limits (of usual functions and sequences), continuity, derivatives, and topological concepts such as open, closed, and compact sets in the usual topology on $\mathr$. Furthermore, we stress the use of the non-commutative quantifiers $\forall$ and $\exists$ in standard analysis creates statements for foundational concepts which can be confusing to the beginner. We demonstrate that in the framework of non-standard analysis we are able to take the previous list of concepts and formulate them with less quantifiers (and sometimes none).
	
	For additional reading we refer to Lindstr\o m \cite{tLin} and Davis \cite{mDavis}. For the reader interested in teaching calculus in the language of infinitesimals we refer to Keisler \cite{jKeisE} \cite{jKeisF} and to Todorov \cite{tdTod2000a}.
	
\section{Summary}

	We take a constructive approach to the non-standard complex (and real) numbers. Our approach is similar in nature to the Cauchy construction of the real numbers as equivalence classes of fundamental sequences of rational numbers. Chapter \ref{C: Preliminaries} is an introduction to Non-Standard Analysis via the ultrapower construction. Section \ref{S: Filters and Ultrafilters} discusses the basic theory of filters and ultrafilters with the intention of specifying a specific ultrafilter with which to define an equivalence relation among sequences of complex numbers. In Section \ref{S: A Non-Standard Extension of C} we demonstrate that the ultrapower set modulo the equivalence relation is a field, which we call $\starc$. In Sections \ref{S: Algebraic Properties} and \ref{S: Order} we show that $\starc$ is an algebraically closed field, with $\starr$ as a real-closed, totally ordered subfield.

	In Section \ref{S: Trichotomy} we define and give examples of infinitesimal, finite, and unlimited elements of $\starc$; thus demonstrating that both $\starc$ and $\starr$ are non-archimedean fields. We introduce the standard part mapping in Section \ref{S: SPM} to connect the non-standard $\starc$ to the standard $\mathc$. It may then seem natural that the standard part mapping is the ring homomorphism which proves that the set of finite non-standard numbers modulo the infinitesimals is isomorphic to the complex numbers.
	
	In Section \ref{S: NSE of Set} we generalize the method of Section \ref{S: A Non-Standard Extension of C} for any set. In a similar spirit of generalizing our methods we develop internal sets in Section \ref{S: Internal Sets} as nets of subsets of $\rplus$, and we demonstrate that the work of Section \ref{S: NSE of Set} is a special case of the internal sets. Having defined internal sets, we then develop the completeness results for $\starr$ in Section \ref{S: Completeness}. We first prove Dedekind Completeness, which is the non-standard analogue of the supremum completeness of $\mathr$. We discuss the Spilling Principles to address the infinitesimal/finite and finite/unlimited barrier. The Saturation Principle is proven in simplified form, and the Cantor Principle is given as a direct corollary.
	
	Sections \ref{S: Transfer by Example} and \ref{S: Logic for Transfer} develop the necessary background for the Transfer Principle; these are the only sections in which we use formal logic. The Transfer Principle is a powerful theorem allowing us to literally transfer properly formed statements about $\mathr$ into statements about $\starr$, and vice versa. The Transfer Principle also tells us where to place the asterisks in the formalization of traditional real analytic statements.
	
	Chapter \ref{C: Topology} reviews open, closed, bounded, and compact sets in the context of the usual topology on $\mathr$. We introduce the monad, and think of it as a universally open infinitesimal interval. We proceed to develop the non-standard characterizations of the items in the list above. We emphasize that the non-standard characterizations reduce the number of quantifiers, and in some cases, eliminates them completely (hence the title of the work: "Reduction of the number of quantifiers..."). For example, the standard definition of a compact set has two quantifiers (For every cover there is a finite subcover). The non-standard characterization of compactness is given in terms of monads and is free of quantifiers.
	
	Chapter \ref{C: Analysis} is concerned with usual topics from real analysis in a non-standard setting. We review the standard definitions of limits, continuity, uniform continuity, sequences, uniform convergence, and derivatives. We then present and prove the non-standard characterizations, and again emphasize the reduction of quantifiers. For example, the standard definition of a limit has three quantifiers, whereas the non-standard characterization has only one quantifier. As another, more subtle, example, the standard definition of uniform continuity has four quantifiers, whereas the non-standard characterization has two quantifiers. Moreover, the non-standard setting gives a definition which conforms to what our intuition would expect uniform continuity to mean.
	
	The examples cited above are only a few instances in which we may reduce the number of quantifiers.

\chapter{Introduction to Non-Standard Analysis}\label{C: Preliminaries}

We present aspects of the theory of filters and ultrafilters in order to construct the non-standard complex (and real) numbers via the ultrapower method. This process is then generalized to turn standard sets and functions into non-standard sets and functions. It turns out that our extension of any standard set is only a special case of an internal set. Knowing what an internal set is, allows us to discuss the various completeness theorems on $\starr$, some of which may come as a surprise. We end the chapter with the Transfer Principle, which is the most general statement for how to move from a standard statement, to a non-standard statement, and vice versa.

\section{Filters and Ultrafilters}\label{S: Filters and Ultrafilters}

Let us begin with the basic theory of filters and ultrafilters. In the following, $I$ is an arbitrary infinite set and $\mathcal{P}(I)$ is the power set of $I$.

	\begin{definition}[Filter and Free Filter]\label{D: Filter and Free Filter}\index{Filter}\index{Free Filter}
		A non-empty set $\scf \subeq \poweri$ is a \textbf{filter} on $I$ if for each $A, B \in \poweri$ we have
	\begin{quote}
		\begin{description}
		
			\item[(a)] $\varnothing \notin \scf$.
			\item[(b)] $A, B \in \scf$ implies $A \cap B \in \scf$.
			\item[(c)] $A \in \scf$ and $A \subeq B \subeq I$ implies $B \in \scf$.
		
		\end{description}
	Further, a filter $\scf$ is a \textbf{free filter} if:
		\begin{description}
			
			\item[(d)] $\cap_{A \in \scf} A = \varnothing$
		
		\end{description}
	A filter $\scf$ is called \textbf{countably incomplete} if:
		\begin{description}
		
			\item[(e)] There is a decreasing sequence of sets $I = I_0 \supset I_1 \supset I_2 \supset ...$ in $\scf$ such that $\bigcap_{n=0}^{\infty} I_n = \varnothing$.
		
		\end{description}
	\end{quote}
	\end{definition}

	\begin{definition}[Ultrafilter]\label{D: Ultrafilter}\index{Ultrafilter}
		A filter $\scu$ on a set $I$ is an \textbf{ultrafilter} (or maximal filter) if there is no filter $\scf$ on $I$ that properly contains $\scu$.
	\end{definition}

	\begin{theorem}[Characterization of Ultrafilters]\label{T: Characterization of Ultrafilter}
		Let $\scu$ be a filter on $I$, then $\scu$ is an ultrafilter on $I$ if and only if for every $A \subeq I$, either $A \in \scu$ or $I \setminus A \in \scu$.
	\end{theorem}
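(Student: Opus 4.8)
The plan is to prove the two implications separately. The reverse direction is quick: assuming the dichotomy, one rules out any proper filter extension directly. The forward direction is the substantive one: assuming maximality, one produces, for a given $A\notin\scu$, an explicit filter containing both $\scu$ and $I\setminus A$, and then invokes maximality to conclude $I\setminus A\in\scu$.

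For the direction ($\Leftarrow$), suppose that for every $A\subeq I$ either $A\in\scu$ or $I\setminus A\in\scu$, and suppose toward a contradiction that some filter $\scf$ on $I$ properly contains $\scu$. Choose $B\in\scf\setminus\scu$. Since $B\notin\scu$, the hypothesis gives $I\setminus B\in\scu\subeq\scf$. As $\scf$ is closed under finite intersection (axiom (b)), we get $\varnothing=B\cap(I\setminus B)\in\scf$, contradicting axiom (a). Hence no such $\scf$ exists, so $\scu$ is an ultrafilter.

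For the direction ($\Rightarrow$), suppose $\scu$ is an ultrafilter and fix $A\subeq I$ with $A\notin\scu$; we must show $I\setminus A\in\scu$. The key observation is that $A\notin\scu$ forces $U\cap(I\setminus A)\neq\varnothing$ for every $U\in\scu$: otherwise $U\subeq A$ for some $U\in\scu$, and axiom (c) would give $A\in\scu$, a contradiction. Now set
\[
\scf\eqdef\{\,C\subeq I : U\cap(I\setminus A)\subeq C\ \text{for some}\ U\in\scu\,\}.
\]
One checks that $\scf$ is a filter: (a) holds precisely because of the non-emptiness observation just made (no $C\in\scf$ can be empty); (b) follows since $U_1\cap(I\setminus A)$ and $U_2\cap(I\setminus A)$ both contain $(U_1\cap U_2)\cap(I\setminus A)$ with $U_1\cap U_2\in\scu$; and (c) is immediate from the definition. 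Moreover $\scu\subeq\scf$ (for $C\in\scu$ take $U=C$), and $I\setminus A\in\scf$ (take $U=I\in\scu$). Thus $\scf$ is a filter with $\scu\subeq\scf$ and $I\setminus A\in\scf$. By maximality of $\scu$ we get $\scf=\scu$, whence $I\setminus A\in\scu$. (One may also note the alternative is exclusive: $A$ and $I\setminus A$ cannot both lie in $\scu$, lest $\varnothing=A\cap(I\setminus A)\in\scu$ contradict (a).)

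The main obstacle — indeed the only step requiring any idea rather than bookkeeping — is isolating the observation that $A\notin\scu$ implies every member of $\scu$ meets $I\setminus A$; this is exactly what makes the candidate $\scf$ avoid the empty set and hence be a genuine filter. Once that is in place, checking the filter axioms for $\scf$ is routine and the maximality of $\scu$ finishes the argument.
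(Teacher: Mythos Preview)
Your proof is correct and follows essentially the same strategy as the paper's. For $(\Leftarrow)$ both arguments are identical; for $(\Rightarrow)$ both build a filter extending $\scu$ that contains $I\setminus A$ and invoke maximality --- your $\scf=\{C:\exists U\in\scu,\ U\cap(I\setminus A)\subeq C\}$ is in fact the very same set as the paper's $\overline{\scu}=\{X:A\cup X\in\scu\}$, just parametrized differently (and you argue directly rather than by contradiction).
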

	\begin{proof} 
		\begin{description}
		
		\item[($\Rightarrow$)] Let $\scu$ be an ultrafilter on $I$, and suppose to the contrary that for a fixed $A \subset I$, neither $A \notin \scu$ nor $I \setminus A \notin \scu$. Define $\overline{\scu} =: \{ X \in \mathcal{P}(I) : A \cup X \in \scu\}$. It is painless to show that $\overline{\scu}$ is a filter on $I$. Note, $\scu \subeq \overline{\scu}$ by taking $B \in \scu$ and using \textbf{(c)} from Definition \ref{D: Filter and Free Filter}. Also, $I \setminus A \in \overline{\scu}$ since $A \cup (I \setminus A) = I \in \scu$, and by assumption $I \setminus A \notin \scu$. So $\scu$ a proper subset of $\overline{\scu}$. Which contradicts the assumption that $\scu$ is a maximal filter.
		\item[($\Leftarrow$)] Let $\scu$ be a filter on $I$, and assume for every $A \subeq I$, either $A \in \scu$ or $I \setminus A \in \scu$, and assume $\scu$ is not an ultrafilter. So there exists $\scv$, a filter on $I$, such that $\scu$ is properly contained by $\scv$. Let $A \in \scv \setminus \scu$. Then $A \cap X \not= \varnothing$ for all $X \in \scu \subset \scv$ (by \textbf{(b)} of Definition \ref{D: Filter and Free Filter}). Particularly, when $X = I \setminus A$ we have $A \cap (I \setminus A) \not= \varnothing$, which is a contradiction.
		
		\end{description}
	\end{proof}

	\begin{corollary}[A Generalization]\label{C: Extension of Characterization}
		Let $\scu$ be an ultrafilter on $I$. If $\bigcup_{n=1}^{m} A_n \in \scu$ for some $m \in \mathn$, then $A_n \in \scu$ for \emph{at least} one $n$. Additionally, if the sets $A_n$ ($n = 1, 2, ..., m$) are mutually disjoint, then $A_n \in \scu$ holds for \emph{exactly} one $n$.
	\end{corollary}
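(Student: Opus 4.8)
The plan is to prove the first assertion by induction on $m$, leaning on the Characterization of Ultrafilters (Theorem \ref{T: Characterization of Ultrafilter}) together with properties \textbf{(b)} and \textbf{(c)} from Definition \ref{D: Filter and Free Filter}. The base case $m = 1$ is immediate, since then $\bigcup_{n=1}^{1} A_n = A_1 \in \scu$. For the inductive step, set $B = \bigcup_{n=1}^{m-1} A_n$, so that $B \cup A_m \in \scu$ by hypothesis. By Theorem \ref{T: Characterization of Ultrafilter}, either $A_m \in \scu$ — in which case we are done with $n = m$ — or $I \setminus A_m \in \scu$. In the latter case I would intersect: $(B \cup A_m) \cap (I \setminus A_m) = B \cap (I \setminus A_m) \in \scu$ by \textbf{(b)}, and since this set is contained in $B \subeq I$, property \textbf{(c)} gives $B = \bigcup_{n=1}^{m-1} A_n \in \scu$. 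The inductive hypothesis then yields $A_n \in \scu$ for some $n \in \{1, \dots, m-1\}$, completing the induction.

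For the second assertion, assuming the sets $A_n$ are mutually disjoint, I would first invoke the part just proved to obtain $A_n \in \scu$ for at least one $n$, and then establish uniqueness. If $A_i, A_j \in \scu$ with $i \neq j$, then by \textbf{(b)} we would have $A_i \cap A_j \in \scu$; but $A_i \cap A_j = \varnothing$ by disjointness, contradicting \textbf{(a)}. Hence exactly one $A_n$ lies in $\scu$.

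I do not anticipate a genuine obstacle here: the statement is essentially a finite iteration of the defining dichotomy of an ultrafilter. The only point requiring a little care is the set-theoretic bookkeeping in the inductive step — making sure that, after discarding $A_m$ in the case $I \setminus A_m \in \scu$, the remaining union $B$ is recovered as an element of $\scu$ by the intersection-then-superset maneuver (using \textbf{(b)} and then \textbf{(c)}) rather than simply assumed.
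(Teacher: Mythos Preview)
Your proposal is correct and follows essentially the same route as the paper, which merely states ``We take Theorem~\ref{T: Characterization of Ultrafilter} as the base case and apply induction'' without spelling out the intersection-then-superset maneuver or the disjoint case. Your write-up is in fact a faithful fleshing-out of exactly that sketch.
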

	\begin{proof}
		We take Theorem \ref{T: Characterization of Ultrafilter} as the base case and apply induction.
	\end{proof}

	\begin{examples}$ $
	\begin{quote}
		\begin{description}
		
			\item[(i)] Let $I = \rplus$ and let $i \in \rplus$. The set $\scf_0 =: \{ A \in \mathcal{P}(\rplus) : i \in A \}$ is a filter which is not free. Indeed, $\varnothing \not\in \scf_0$ since $\rplus \not= \varnothing$. Suppose $A, B \in \scf_0$ so that $i \in A$ and $i \in B$. Then $i \in A \cap B$, therefore $A \cap B \in \scf_0$. Now, let $A \in \scf_0$ and suppose $A \subseteq B \subseteq \rplus$. Then $i \in A$ and by assumption $i \in B$, therefore $B \in \scf_0$. Hence, $\scf_0$ is a filter. To show that $\scf_0$ is not free note that $i \in A$ for all $A \in \scf_0$ and so $\{ i \} \subseteq \cap_{A \in \scf_0} A \not= \varnothing$.
			\item[(ii)] Let $I = \mathn$. The set $\scf_{r} =: \{ A \in \mathcal{P}(\mathn) : (\mathn \setminus A)$ is finite $\}$ is the Frechet filter which is not an ultrafilter. Indeed, $\varnothing \not\in \scf_{r}$ for if it were then $\mathn \setminus \varnothing = \mathn$ would be finite, which is a contradiction. Let $A, B \in \scf_{r}$. Then $(\mathn \setminus A)$ and $(\mathn \setminus B)$ are finite. Therefore, $(\mathn \setminus A) \cup (\mathn \setminus B)$ is finite. By DeMorgan's Laws we have $\mathn \setminus (A \cap B)$ is finite. Hence, $A \cap B \in \scf_{r}$. Next, let $A \in \scf_{r}$ and $A \subseteq B \subseteq \mathn$. Then $(\mathn \setminus A)$ is finite and since $A \subseteq B$ we have $(\mathn \setminus B) \subseteq (\mathn \setminus A)$ giving $(\mathn \setminus B)$ is finite. Hence, $B \in \scf_{r}$. To see that $\scf_{r}$ is not an ultrafilter consider the set of even numbers in $\mathn$, which are infinite, and therefore not in $\scf_r$. Notice that the complement of the evens is the odds, which are also infinite, and therefore not in $\scf_r$. So we have found a set for which neither it, nor its compliment is in the filter. Therefore, $\scf_r$ is not an ultrafilter by Theorem \ref{T: Characterization of Ultrafilter}.
		
		\end{description}
	\end{quote}
	\end{examples}

	We now focus our attention on the existence of the ultrafilter. To that end, let $\scs$ be a partially ordered set with $\leq$ a partial order. An element $m \in \scs$ is a \textbf{maximal element} of $\scs$ if there is no $s \in \scs$ such that $m \lneqq s.$

	\begin{lemma}[Zorn's Lemma]\label{L: Zorn's Lemma}\index{Zorn's Lemma}
		If $\scs$ is a partially ordered set such that every totally ordered subset $\scl$ of $\scs$ has an upper bound $b \in \scs$, then $\scs$ has maximal elements.
	\end{lemma}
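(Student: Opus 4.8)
The plan is to derive Zorn's Lemma from the Axiom of Choice, to which it is in fact logically equivalent; since every other result in this section is given a proof, I will sketch the standard derivation rather than simply postulate the statement. The argument is by contradiction. Suppose $\scs$ is a partially ordered set in which every totally ordered subset has an upper bound in $\scs$, yet $\scs$ possesses no maximal element.

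First I would observe that the hypothesis is slightly stronger than it appears: given any chain $\scl \subeq \scs$, it has an upper bound $b \in \scs$, and since $b$ is not maximal there is some $s \in \scs$ with $b \lneqq s$; such an $s$ is then a \emph{strict} upper bound of $\scl$. Hence for every chain $\scl \subeq \scs$ the set $U(\scl) = \{ s \in \scs : s \text{ is a strict upper bound of } \scl \}$ is non-empty (this applies to the empty chain as well, so in particular $\scs \neq \varnothing$). By the Axiom of Choice, fix a function $g$ with $g(\scl) \in U(\scl)$ for every chain $\scl$.

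Next I would construct, by transfinite recursion on the ordinals, a sequence $(a_\alpha)$ of elements of $\scs$ by setting $a_\alpha = g(\{ a_\beta : \beta < \alpha \})$. A straightforward induction shows that for each ordinal $\alpha$ the set $\{ a_\beta : \beta < \alpha \}$ is a chain (indeed $\beta < \gamma$ implies $a_\beta \lneqq a_\gamma$), so the recursion is well-defined, and the assignment $\alpha \mapsto a_\alpha$ is strictly increasing, hence injective, on the entire class of ordinals. This is impossible: by Hartogs' theorem there is an ordinal admitting no injection into the set $\scs$ (equivalently, the range of the sequence together with Replacement would exhibit the class of all ordinals as a set, contradicting Burali-Forti). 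This contradiction forces $\scs$ to have a maximal element.

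The main obstacle is foundational rather than combinatorial. One must be careful that the transfinite recursion is legitimate — it is a definition by recursion along the proper class of ordinals, justified by the Recursion Theorem — and that the concluding step correctly invokes the appropriate set-theoretic principle (Hartogs' lemma or Burali-Forti) to convert "a strictly increasing ordinal-indexed sequence inside a set" into an outright contradiction. A reader who wishes to avoid ordinals can instead route the argument through the Hausdorff Maximal Principle (every partially ordered set contains a maximal chain) or the Bourbaki--Witt fixed-point theorem, but each of those carries essentially the same transfinite content, so nothing is gained beyond a change of packaging.
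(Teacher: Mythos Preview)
Your argument is correct: this is the standard derivation of Zorn's Lemma from the Axiom of Choice via transfinite recursion and Hartogs' theorem, and the steps you outline (choice of a strict-upper-bound function, ordinal-indexed strictly increasing sequence, contradiction with Hartogs/Burali--Forti) are all sound.

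However, your route differs from the paper's in the most extreme possible way: the paper gives no proof at all. Immediately after stating the lemma it simply remarks that Zorn's Lemma is equivalent to the Axiom of Choice ``and thus can not be proven,'' treating it as a foundational postulate rather than a theorem to be derived. So your premise that ``every other result in this section is given a proof'' led you to supply an argument the author deliberately omitted. What your approach buys is an actual demonstration that Zorn follows from Choice within ZF; what the paper's approach buys is brevity and an honest acknowledgment that, at the level of this thesis, the lemma functions as an axiom.
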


	Recall that Zorn's Lemma is equivalent to the Axiom of Choice, and thus can not be proven.
	
	\begin{theorem}[Existence of Ultrafilter on $I$]\label{T: Existence}
		Let $I$ be an infinite set and $I_n \subseteq I$ such that $$I = I_0 \supset I_1 \supset I_2 \supset ... \textnormal{ and } \bigcap^{\infty}_{n=0} I_n = \varnothing.$$ Then there exists a free ultrafilter $\mathcal{U}$ on $I$ such that $I_n \in \mathcal{U}$ for all $n \in \mathn$.
	\end{theorem}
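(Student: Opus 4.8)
The plan is to build the desired ultrafilter in two stages: first produce a concrete free filter that already contains every $I_n$, and then enlarge it to a maximal filter by an application of Zorn's Lemma (Lemma~\ref{L: Zorn's Lemma}).

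\emph{Stage one.} Set $\scf \eqdef \{A \in \poweri : I_n \subeq A \text{ for some } n \in \mathn\}$. Since the inclusions $I_0 \supset I_1 \supset \cdots$ are proper, no $I_n$ can be empty (an empty term would leave no room for a strictly smaller $I_{n+1}$), so $\varnothing \notin \scf$, which is \textbf{(a)} of Definition~\ref{D: Filter and Free Filter}. For \textbf{(b)}: if $I_n \subeq A$ and $I_m \subeq B$, then $I_{\max(n,m)} \subeq A \cap B$ because the $I_k$ are nested; \textbf{(c)} is immediate from the definition of $\scf$. Thus $\scf$ is a filter, and it is free, since $\bigcap_{A \in \scf} A \subeq \bigcap_{n=0}^{\infty} I_n = \varnothing$. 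By construction $I_n \in \scf$ for every $n$.

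\emph{Stage two.} Let $\scs$ be the set of all filters on $I$ that contain $\scf$, partially ordered by inclusion; $\scs \neq \varnothing$ because $\scf \in \scs$. Given a totally ordered subset $\scl \subeq \scs$, I claim $\scf^{*} \eqdef \bigcup_{\scg \in \scl} \scg$ is again a filter on $I$ and an upper bound for $\scl$ in $\scs$: it avoids $\varnothing$ since each member does; it is closed under supersets trivially; and it is closed under finite intersection because any two of its elements already lie together in one member of the chain, which is a filter. Hence every chain in $\scs$ has an upper bound in $\scs$, and Zorn's Lemma yields a maximal element $\scu \in \scs$.

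Finally, $\scu$ is an ultrafilter: if a filter $\scv$ on $I$ properly contained $\scu$, then $\scf \subeq \scu \subsetneq \scv$ would place $\scv$ in $\scs$, contradicting the maximality of $\scu$ in $\scs$; so $\scu$ is a maximal filter, i.e.\ an ultrafilter by Definition~\ref{D: Ultrafilter}. It contains each $I_n$ because $\scf \subeq \scu$, and it is free because $\bigcap_{A \in \scu} A \subeq \bigcap_{A \in \scf} A = \varnothing$. The only step requiring genuine care is verifying that the union of a chain of filters is a filter — and there the finite-intersection clause is exactly where the chain hypothesis is used; the rest is bookkeeping. (One could instead invoke Theorem~\ref{T: Characterization of Ultrafilter} to check directly that $\scu$ decides every subset of $I$, but the maximality argument above is shorter.)
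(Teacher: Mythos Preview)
Your proof is correct and follows essentially the same route as the paper: build the base filter $\scf_0=\{A:I_n\subeq A\text{ for some }n\}$, then apply Zorn's Lemma to a poset of filters above it. The only cosmetic difference is that the paper takes $\scs$ to be the \emph{free} filters containing each $I_n$, whereas you take $\scs$ to be all filters containing $\scf$ and verify freeness of the maximal element at the end; your version is arguably tidier since it spares you from checking freeness of the chain-union.
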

	\begin{proof} 
		Let $\mathcal{S}$ be the set of all free filters $\mathcal{F}$ on $I$ such that $I_n \in \mathcal{F}$ for all $n \in \mathn$. We order $\mathcal{S}$ by set inclusion, $\subseteq$. Note, $\mathcal{S} \not= \varnothing$ since $\mathcal{F}_0 = \{ A \in \mathcal{P}(I) : I_n \subseteq A$ for some $n \}$ is a free filter on $I$ and $I_n \in \mathcal{F}_0$ for all $n$. Let $\mathcal{L} \subseteq \mathcal{S}$ be a totally ordered subset of $\mathcal{S}$. So each $L \in \mathcal{L}$ is a free filter on $I$ with $I_n \in L$ for all $n$. 
		
		Define $\mathcal{L}^* = \cup_{L \in \mathcal{L}} L$. Then $\mathcal{L}^*$ is also a free filter on $I$. Indeed, $\varnothing \notin \mathcal{L}^*$ since $\varnothing \in \mathcal{L}$ would imply $\varnothing \in L$ for some $L \in \mathcal{L}$, which is a contradiction. Next, let $A, B \in \mathcal{L}^*$. Then $A \in L_1$ and $B \in L_2$ for some $L_1, L_2 \in \mathcal{L}$. Without loss of generality, let $L_1 \subseteq L_2$ so $A \in L_2$ and $B \in L_2$. Therefore $A \cap B \in L_2$ so $A \cap B \in \mathcal{L}^*$. Lastly, let $A \in \mathcal{L}^*$ and $A \subseteq B \subseteq I$. Since $A \in \mathcal{L}^*$, we have $A \in L_1$ for some $L_1 \in \mathcal{L}$ hence $L_1$ a free filter implies $B \in L_1$ so $B \in \mathcal{L}^*$. Therefore $\mathcal{L}^* \in \mathcal{S}$, and $\mathcal{L}^*$ is an upper bound for $\mathcal{L}$, so Zorn's Lemma implies $\mathcal{S}$ has a maximal element. Let $\mathcal{U} \in \mathcal{S}$ be such a maximal element, then $\mathcal{U}$ is a filter on $I$ and $I_n \in \mathcal{U}$ for all $n$.
	\end{proof}

\section{Probability Measure on $\powerplus$}\label{S: Probability Measure}

	We now fix $I = \rplus$, $I_n = (0, \frac{1}{n})$, and $\scu$ to be an ultrafilter on $\rplus$ with $I_n \in \scu$ for all $n \in \mathn$. We know such an ultrafilter exists by Theorem \ref{T: Existence}. The purpose of this excursion into probability measures is to justify our later use of the phrase "almost everywhere", which will ease our notational burden in some instances.

	\begin{definition}[Probability Measure]\label{D: Probability Measure}\index{Probability Measure}
		Let $p: \powerplus \to \{0,1\}$ be a two-valued \textbf{probability measure} with the following properties:
	\begin{quote}
		\begin{description}
		
			\item[(a)] The probability measure $p$ is finitely additive. That is, $p(A \cup B) = p(A) + p(B)$ for disjoint $A$ and $B$,
			\item[(b)] We have $p(I_n) = 1$, where $I_n = \{ \varep \in \rplus : 0 < \varep < \frac{1}{n}\}$, and
			\item[(c)] For every finite subset $A$ of $\rplus$, $p(A) = 0$; in particular, $p(\varnothing) = 0$.
		
		\end{description}
	\end{quote}
	\end{definition}

	\begin{theorem}[Ultrafilter implies Probability Measure] \label{T: Ultrafilter implies Probability Measure}
		Let $\scu$ be an ultrafilter. Define the probability measure $p: \powerplus \to \{0,1\}$ by
		\begin{align}
			p(A) =	
				\begin{cases}
					0 &\text{if $A \notin \scu$}\\ 
					1 &\text{if  $A \in \scu$}\notag
				\end{cases}
		\end{align}
		Then $p$ is a finitely additive measure which satisfies \emph{\textbf{(a)} - \textbf{(c)}} of Definition \ref{D: Probability Measure}.
	\end{theorem}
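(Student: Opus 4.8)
The plan is to verify the three conditions \textbf{(a)}--\textbf{(c)} of Definition \ref{D: Probability Measure} in turn, noting in advance that only finite additivity \textbf{(a)} genuinely exploits the maximality of $\scu$; conditions \textbf{(b)} and \textbf{(c)} need only that $\scu$ is a free filter with $I_n \in \scu$ for all $n$, which is exactly the $\scu$ fixed at the start of this section (its existence being guaranteed by Theorem \ref{T: Existence} applied to $I = \rplus$, $I_n = (0,\frac1n)$).

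For \textbf{(a)}, let $A, B \subseteq \rplus$ be disjoint. Since $p$ is two-valued, I would split according to the membership of $A$ and $B$ in $\scu$. The case $A, B \in \scu$ cannot occur: by \textbf{(b)} of Definition \ref{D: Filter and Free Filter} it would give $A \cap B = \varnothing \in \scu$, contradicting \textbf{(a)} of that definition. If exactly one of them, say $A$, lies in $\scu$, then $A \subseteq A \cup B \subseteq \rplus$ forces $A \cup B \in \scu$ by \textbf{(c)} of Definition \ref{D: Filter and Free Filter}, so $p(A \cup B) = 1 = p(A) + p(B)$; the other single-membership case is symmetric. Finally, if neither $A$ nor $B$ is in $\scu$, then Corollary \ref{C: Extension of Characterization} (with $m=2$) shows $A \cup B \notin \scu$, so $p(A \cup B) = 0 = p(A) + p(B)$. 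These cases are exhaustive, so $p$ is finitely additive.

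Condition \textbf{(b)} is immediate from the choice of $\scu$: since $I_n \in \scu$ for every $n \in \mathn$, we get $p(I_n) = 1$. For \textbf{(c)}, first note $p(\varnothing) = 0$ because $\varnothing \notin \scu$ by \textbf{(a)} of Definition \ref{D: Filter and Free Filter}. Now let $A \subseteq \rplus$ be finite; it suffices to show $A \notin \scu$. I would first rule out singletons: if $\{a\} \in \scu$ with $a > 0$, choose $n \in \mathn$ with $\frac1n \le a$, so that $\{a\} \cap I_n = \varnothing$; since $I_n \in \scu$, closure under intersection would put $\varnothing$ in $\scu$, which is impossible. As $A$ is a finite union of singletons, Corollary \ref{C: Extension of Characterization} shows $A \in \scu$ would force $\{a\} \in \scu$ for some $a$, a contradiction. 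Hence $p(A) = 0$.

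I expect the only delicate point to be the ``neither $A$ nor $B$ in $\scu$'' subcase of \textbf{(a)}, which is precisely where maximality of $\scu$ enters, via Theorem \ref{T: Characterization of Ultrafilter} and its Corollary \ref{C: Extension of Characterization}; everything else in the argument would go through for an arbitrary free filter containing all the $I_n$.
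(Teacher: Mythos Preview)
Your argument is correct. Note, however, that the paper itself does not supply a proof of this theorem: it states the result and immediately moves on to the discussion of the one-to-one correspondence between ultrafilters and two-valued probability measures. So there is no ``paper's own proof'' to compare against; your write-up in fact fills a gap the text leaves open.

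One small remark on your verification of \textbf{(c)}: the argument that no singleton $\{a\}$ lies in $\scu$ can be shortened slightly. Since $\scu$ is free (Definition~\ref{D: Filter and Free Filter}\textbf{(d)}), if $\{a\}\in\scu$ then every $B\in\scu$ must contain $a$ (otherwise $\{a\}\cap B=\varnothing\in\scu$), so $a\in\bigcap_{B\in\scu}B$, contradicting freeness. Your route via the specific sets $I_n$ is of course also valid and has the virtue of making explicit which feature of the chosen $\scu$ is being used.
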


	The above theorem in addition to the converse, which implies the existence of an ultrafilter given a probability measure as defined in Definition \ref{D: Probability Measure}, establishes a one-to-one correspondence between our fixed ultrafilter $\scu$ and our finitely additive two-valued probability measure $p$. Given the existence of $\scu$ on $\rplus$ from Theorem \ref{T: Existence}, we are guaranteed the existence of such a probability measure.
	
	We now develop the meaning of "almost everywhere" in the language of predicates.

	\begin{definition}[Almost Everywhere]\label{D: Predicate}\index{Almost Everywhere}
		Let $P: \rplus \to \{ \textnormal{true, false}\}$ be a predicate in one variable on $\rplus$. We say that $P$ holds \textbf{almost everywhere} (written a.e.) if $p (\{ \varep \in \rplus : P(\varep) \textnormal{ is true} \} ) = 1$. Alternately $P$ holds a.e. if $\{ \varep \in \rplus : P(\varep) \textnormal{ is true} \}\in \scu$.
	\end{definition}
	
	Before presenting examples of predicates, we introduce a definition. For non-empty sets $A$ and $B$, the set of all functions from $A$ to $B$ is the \textbf{ultrapower set}, which is denoted by $B^A$. Let $S$ be a set. We call the elements of $S^{\rplus}$ \textbf{nets} in $S$ and we denot them $(\aep) \in S^{\rplus}$, where $\varep \in \rplus$.

	\begin{examples}
		The following are examples of predicates:
	\begin{quote}
			\begin{description}
			
				\item[(i)] Let $(\aep), (\bep) \in \crplus$. Let $P(\varep)$ be the predicate "$\aep = \bep$".
				\item[(ii)] Let $(\aep) \in \crplus$ and $\mathcal{S} \subseteq \mathc$. Let $P(\varep)$ be the predicate "$\aep \in \mathcal{S}$".
				\item[(iii)] Let $(\aep) \in \crplus$ and $(\Sep) \in \mathcal{P}(\mathc)^{\rplus}$. Let $P(\varep)$ be the predicate "$\aep \in \Sep$".
				\item[(iv)] Let $(\aep), (\bep) \in \mathr^{\rplus}$. Let $P(\varep)$ be the predicate "$\aep < \bep$".
			
			\end{description}
	\end{quote}
	\end{examples}

\section{A Non-Standard Extension of $\mathc$}\label{S: A Non-Standard Extension of C}
	
	We now extend the standard complex numbers $\mathc$ to the non-standard complex numbers $\starc$. This will be done by means of ultrapowers and the ultrafilter which was fixed in Section \ref{S: Probability Measure}.
	
	It is clear that $\crplus$ is a commutative ring with unity and the set $J = \{ (\aep) \in \crplus : \aep = 0 \textnormal{ a.e.} \}$ is an ideal of $\crplus$. Recall from Definition \ref{D: Predicate}, this means $\{ \varep \in \rplus :\aep = 0\} \in \scu$.
	
	\begin{definition}[Equivalence Relation]\label{D: Equivalence Relation}
		For $(\aep), (\bep) \in \crplus$, we say $(\aep) \sim (\bep)$ \iff $\aep = \bep$ a.e. Alternately (from Definition \ref{D: Predicate}), we say $(\aep) \sim (\bep)$ \iff $\{ \varep \in \rplus : \aep = \bep \} \in \scu$.
	\end{definition}

	\begin{definition}[Equivalence Classes]\label{D: Equivalence Classes}
		If $(\aep) \in \crplus$, we denote by $\bra \aep \ket$ the equivalence class of $(\aep)$. We denote the corresponding factor ring $\crplus / \sim$ by $\starc$.
	\end{definition}
	
	At the very least we would like $\starc$ as defined above to be a field. Indeed, we will show it is. However, we will get so much more. Eventually, we show $\starc$ is a non-archimedian, algebraicially closed field which has $\mathc$ as an embedded subset.

	\begin{theorem}\label{T: Field}
		If $J = \{ (\aep) \in \crplus : \aep = 0$ a.e.$\}$, as before, then $J$ is a maximal ideal of $\crplus$; therefore $\starc$ is a field.
	\end{theorem}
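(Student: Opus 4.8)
The plan is to prove the slightly stronger statement that every nonzero element of $\starc$ admits a multiplicative inverse; since $\crplus$ is a commutative ring with unity, this is equivalent to $J$ being a maximal ideal, and hence to $\starc = \crplus/\sim$ being a field. First I would record the easy fact that $J$ is a \emph{proper} ideal: the constant net $(1)$ is not in $J$, because $\{\varep \in \rplus : 1 = 0\} = \varnothing \notin \scu$, so $\starc$ is at least a nonzero commutative ring with unity.

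The heart of the argument is the following. Let $(\aep) \in \crplus$ with $(\aep) \notin J$. By Definition \ref{D: Predicate} this means $\{\varep \in \rplus : \aep = 0\} \notin \scu$, and here the ultrafilter hypothesis does the real work: by Theorem \ref{T: Characterization of Ultrafilter} the complement $A \eqdef \{\varep \in \rplus : \aep \neq 0\}$ must belong to $\scu$. Define a net $(\bep) \in \crplus$ by setting $\bep = \aep^{-1}$ for $\varep \in A$ and $\bep = 0$ for $\varep \notin A$. Then $\aep \bep = 1$ for every $\varep \in A$, so $\{\varep : \aep\bep = 1\} \supseteq A \in \scu$ forces $\aep \bep = 1$ a.e.; that is, $(\aep)(\bep) - (1) \in J$, so in $\starc$ we have $\bra \aep \ket \bra \bep \ket = \bra 1 \ket$ and $\bra \aep \ket$ is a unit.

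To package this as maximality of $J$ directly: given any ideal $K$ of $\crplus$ with $J \subsetneq K$, choose $(\aep) \in K \setminus J$ and construct $(\bep)$ as above. Since $K$ is an ideal, $(\aep)(\bep) \in K$, while $(\aep)(\bep) - (1) \in J \subseteq K$; subtracting, $(1) \in K$, hence $K = \crplus$. Thus $J$ is maximal and $\starc$ is a field. I do not anticipate a genuine obstacle: the only delicate point is the passage from ``$\aep = 0$ fails almost everywhere'' to ``$\aep \neq 0$ almost everywhere,'' which relies essentially on $\scu$ being an ultrafilter (for a merely free filter this step fails and the quotient need not be a field); the remaining verifications—that $J$ is a proper ideal and that $(\bep)$ is a well-defined net with the asserted property—are routine.
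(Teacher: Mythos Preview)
Your proposal is correct and follows essentially the same route as the paper: pick $(\aep)\in K\setminus J$, use the ultrafilter property (Theorem~\ref{T: Characterization of Ultrafilter}) to conclude $A=\{\varep:\aep\neq 0\}\in\scu$, define a pointwise inverse on $A$, and deduce $(1)\in K$. The only cosmetic differences are that the paper sets $\bep=1$ rather than $\bep=0$ off $A$ (immaterial, since that set is $\scu$-null) and that you add the explicit observation that $J$ is proper, which the paper leaves implicit.
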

	\begin{proof} 
		Suppose $K \subeq \crplus$ is an ideal such that $J \subsetneqq K \subeq \crplus$. Let $(\aep) \in K \setminus J$. That is, $A =: \{ \varep \in \rplus : \aep \not= 0 \} \in \scu$. Define $(\bep) \in \crplus$ by
		\begin{align}
			\bep =	
				\begin{cases}
					\frac{1}{\aep} &\text{ if $\varep \in A$}\\ 
					1 &\text{if $\varep \in \rplus \setminus A$}\notag
				\end{cases}
		\end{align}
		Let $B = \{ \varep \in \rplus : \aep \cdot \bep = 1\}$. We have $A \subeq B$ since $\varep \in A$ implies $\bep = \frac{1}{\aep}$ and $\aep \cdot \bep = 1$. By \textbf{(c)} of Definition \ref{D: Filter and Free Filter} we have $B \in \scu$, so $\aep \cdot \bep = 1$ a.e. Hence $(\aep)$ has multiplicative inverse $(\bep)$ as defined above, and $(1) \in K$. Therefore $\crplus \subeq K$ and $K = \crplus$. So $J$ is a maximal ideal and $\starc$ is a field.
	\end{proof}

	By embedding $\mathc$ in $\starc$ we may discuss standard elements of $\mathc$ as elements of $\starc$ so as to compare standard to non-standard elements.

	\begin{definition}\label{D: Embedding}\index{Embedding}
		We \textbf{embed} $\mathc$ in $\starc$, denoted $\mathc \hookrightarrow \starc$, by $c \to \bra \cep \ket$, with $\cep = c$ for all $\varep \in \rplus$. We write $\bra c \ket$ instead of $\bra \cep \ket$, and $\mathc \subset \starc$ rather than the more precise $\mathc \hookrightarrow \starc$.
	\end{definition}

	\begin{lemma}\label{L: Proper Extension}
		The non-standard extension $\starc$ is a proper extension of $\mathc$. That is, $\starc \setminus \mathc \not= \varnothing$.
	\end{lemma}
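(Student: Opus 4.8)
The plan is to exhibit a single explicit element of $\starc$ that cannot equal $\bra c\ket$ for any $c\in\mathc$, thereby witnessing $\starc\setminus\mathc\neq\varnothing$. The natural candidate is the class $\bra\varep\ket$ of the identity net $(\aep)$ with $\aep=\varep$ for every $\varep\in\rplus$. Intuitively this is an infinitesimal, but since infinitesimals have not yet been formally introduced, I would argue directly.

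First I would suppose, for contradiction, that $\bra\varep\ket=\bra c\ket$ for some $c\in\mathc$. Unwinding Definitions \ref{D: Embedding} and \ref{D: Equivalence Relation}, this means the net $(\varep)$ agrees with the constant net $(c)$ almost everywhere, i.e.
\begin{align}
A=:\{\varep\in\rplus:\varep=c\}\in\scu.\notag
\end{align}
Next I would observe that $A$ has at most one element: it is either empty (if $c\notin\rplus$) or the singleton $\{c\}$ (if $c\in\rplus$). In particular $A$ is finite. But by property \textbf{(c)} of Definition \ref{D: Probability Measure} — equivalently, by our running assumption that the $I_n=(0,\frac1n)$ lie in $\scu$ together with the ultrafilter axioms — no finite set can belong to $\scu$. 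Indeed, if a finite $A\in\scu$, then picking $n$ large enough that $A\cap(0,\frac1n)=\varnothing$ gives two disjoint sets $A$ and $I_n$ both in $\scu$, so $\varnothing=A\cap I_n\in\scu$ by \textbf{(b)} of Definition \ref{D: Filter and Free Filter}, contradicting \textbf{(a)}. This contradiction shows $\bra\varep\ket\notin\mathc$, completing the proof.

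There is essentially no obstacle here; the only point requiring a word of care is making sure the claim "finite sets are not in $\scu$" is justified from what is available — and that follows immediately from the freeness of $\scu$ (or directly from $I_n\in\scu$ as above), which is guaranteed by Theorem \ref{T: Existence}. One could alternatively phrase the whole argument through the probability measure $p$ of Theorem \ref{T: Ultrafilter implies Probability Measure}: $p(A)=0$ for finite $A$ by \textbf{(c)}, yet $\bra\varep\ket=\bra c\ket$ would force $p(A)=1$, a contradiction. Either formulation is a one-paragraph argument.
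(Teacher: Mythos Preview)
Your proof is correct and follows essentially the same approach as the paper: both exhibit $\bra\varep\ket$ as the witness, suppose $\bra\varep\ket=\bra c\ket$ for some $c\in\mathc$, and derive a contradiction from the fact that $\{\varep\in\rplus:\varep=c\}$ is a finite set (hence of measure zero by Definition~\ref{D: Probability Measure}\textbf{(c)}). Your version is slightly more explicit in justifying why finite sets cannot lie in $\scu$, but the argument is the same.
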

	\begin{proof}
		Let $\aep = \varep$. We demonstrate that $\bra \varep \ket \in \starc \setminus \mathc$. Assume to the contrary that $\bra \varep \ket = \bra c \ket$ for some $c \in \mathc$. Then $A = \{ \varep \in \rplus : \varep = c\} \in \scu$. Observe that $A \not= \varnothing$ so $c \in \rplus$ implying $A = \{ c \} \in \scu$, a contradiction since $p(\{c\}) = 0$. Therefore $\bra \varep \ket \not= \bra c \ket$ for any $c \in \mathc$. So $\bra \aep \ket = \bra \varep \ket \notin \mathc$.
	\end{proof}

\section{Algebraic Properties of $\starc$ and $\starr$}\label{S: Algebraic Properties}

	We demonstrate that $\starc$ is an algebraically closed field and that $\starr$ is a real-closed field.

	\begin{theorem}\label{T: Algebraically Closed}
		The non-standard extension $\starc$ is an algebraically closed field. That is, every polynomial of non-zero degree with coefficients in $\starc$ has a root in $\starc$.
	\end{theorem}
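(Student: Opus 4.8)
The plan is to lift the problem componentwise to $\mathc$ and invoke the classical Fundamental Theorem of Algebra. Let $P(x) = \bra a_{n\varep}\ket x^n + \dots + \bra a_{1\varep}\ket x + \bra a_{0\varep}\ket$ be a polynomial of degree $n \geq 1$ with coefficients in $\starc$, where each coefficient is an equivalence class represented by some net $(a_{k\varep}) \in \crplus$ for $k = 0, 1, \dots, n$. Since $\deg P = n$, the leading coefficient $\bra a_{n\varep}\ket$ is non-zero in $\starc$, which by Definition \ref{D: Equivalence Relation} means that the set $A =: \{\, \varep \in \rplus : a_{n\varep} \neq 0 \,\}$ belongs to $\scu$.

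First I would use $A$ to manufacture a net of genuine degree-$n$ polynomials over $\mathc$. For each $\varep \in A$ the polynomial $P_\varep(x) =: a_{n\varep} x^n + \dots + a_{1\varep} x + a_{0\varep} \in \mathc[x]$ has degree exactly $n \geq 1$, so by the Fundamental Theorem of Algebra it possesses a root $z_\varep \in \mathc$; for $\varep \in \rplus \setminus A$ put $z_\varep =: 0$. This yields a net $(z_\varep) \in \crplus$ and hence an element $\bra z_\varep \ket \in \starc$. (The selection of the roots $z_\varep$ is a further appeal to the Axiom of Choice, already in force through the existence of $\scu$.)

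Next I would check that $\bra z_\varep \ket$ is a root of $P$. Since the ring operations of $\starc = \crplus/\!\sim$ are computed componentwise on representatives, $P(\bra z_\varep \ket) = \bra\, \sum_{k=0}^{n} a_{k\varep}\, z_\varep^{\,k} \,\ket$. For every $\varep \in A$ we have $\sum_{k=0}^{n} a_{k\varep}\, z_\varep^{\,k} = P_\varep(z_\varep) = 0$, so $\{\, \varep \in \rplus : \sum_{k=0}^{n} a_{k\varep}\, z_\varep^{\,k} = 0 \,\} \supseteq A$; since $A \in \scu$, property \textbf{(c)} of Definition \ref{D: Filter and Free Filter} puts this larger set in $\scu$ as well. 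Hence $\sum_{k=0}^{n} a_{k\varep}\, z_\varep^{\,k} = 0$ a.e., i.e. $P(\bra z_\varep \ket) = \bra 0 \ket = 0$ in $\starc$. Combined with Theorem \ref{T: Field}, which already gives that $\starc$ is a field, this establishes that $\starc$ is algebraically closed.

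The main obstacle — in fact the only place where care is required — is the bookkeeping around the degree: one must restrict to the index set $A$ on which the leading coefficient genuinely does not vanish, because only there is $P_\varep$ an honest degree-$n$ polynomial to which the Fundamental Theorem of Algebra applies; on the complement $P_\varep$ might collapse to a non-zero constant with no root. Everything else is the routine ``transfer-by-ultrafilter'' pattern: a property that holds in $\mathc$ at each relevant index passes to $\starc$ precisely because the relevant index set lies in $\scu$. One could alternatively derive the statement from the Transfer Principle once it is available, but the direct componentwise argument sketched here is self-contained.
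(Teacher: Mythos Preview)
Your proof is correct and follows essentially the same componentwise strategy as the paper: lift to $\mathc[x]$, invoke the Fundamental Theorem of Algebra at each index, and take the equivalence class of the resulting net of roots. Your bookkeeping with the set $A = \{\varep : a_{n\varep} \neq 0\}$ is in fact more careful than the paper's own argument, which simply asserts a root $z_\varep$ exists for \emph{all} $\varep \in \rplus$ without addressing the indices where $P_\varep$ could collapse to a non-zero constant.
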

	\begin{proof} 
		Let $P \in \polycstar$ with $\deg P \geq 1$. In other words, let $P(x) = \sum_{n=0}^{m} \alpha_n x^n$, where $\alpha_n \in \starc$ and $\alpha_m \not= 0$. Since $\alpha_n \in \starc$, $\alpha_n = \bra a_{n\varep} \ket$ for some $(a_{n\varep}) \in \crplus$ ($n = 0, 1, ... , m$). Define $P_\varep(x) := \sum_{n=0}^{m} a_{n\varep} x^n$. For any fixed $\varep \in \rplus$, $P_\varep \in \mathc[x]$. Since $\mathc$ is algebraically closed there exists $z_\varep \in \mathc$ such that $P_\varep(z_\varep) = 0$ for all $\varep \in \rplus$.

		Now, define $z = \bra z_\varep \ket \in \starc$. We have $\{ \varep \in \rplus : P_\varep(z_\varep) = 0 \} = \rplus \in \scu$. Define the internal function $\bra P_\varep \ket \in \polycstar$ where $\bra P_\varep \ket : \starc \to \starc$ by $\bra P_\varep \ket ( \bra c_\varep \ket ) = \bra P_\varep (c_\varep) \ket$. So $\bra P_\varep \ket ( \bra z_\varep \ket ) = \bra P_\varep ( z_\varep ) \ket = \bra 0 \ket = 0$

		We must now show that $\bra P_\varep \ket = P$. Let $x = \bra x_\varep \ket \in \starc$ (arbitrarily). Then $\bra P_\varep \ket(x) = \bra P_\varep \ket (\bra x_\varep \ket) \overset{\text{def}}{=} \bra P_\varep (x_\varep) \ket = \bra \sum_{n=0}^{m} a_{n\varep} x_{\varep}^{n} \ket = \sum_{n=0}^{m} \bra a_{n\varep} \ket \bra x_{\varep}^{n} \ket = \sum_{n=0}^{m} \bra a_{n\varep} \ket \bra x_\varep \ket^{n} = \sum_{n=0}^{m} \alpha_n \bra x_\varep \ket^n = \sum_{n=0}^{m} \alpha_n x^n = P(x)$.
	\end{proof}

	We now take the short route to showing that $\mathr$ is a real closed field.
	
	\begin{definition}\label{D: Real Closed}
		An arbitrary field $\mathbb{K}$ is \textbf{real closed} if
	\begin{quote}
			\begin{description}
			
				\item[(a)] For all $a \in \mathbb{K}$ either $x^2 = a$ or $x^2 = -a$ is solvable in $\mathbb{K}$.
				\item[(b)] Every polynomial of odd degree with coefficients in $\mathbb{K}$ has a root in $\mathbb{K}$.
				
			\end{description}
	\end{quote}
	\end{definition}

	Recall that each real closed field $\mathbb{K}$ is uniquely orderable by $x \geq 0$ \iff there exists $y\in \mathbb{K}$ such that $x = y^2$ (Van Der Waerden \cite{VanDerWaerden}, ch. 11).
	
	\begin{theorem}[Artin-Schrier]
		If $\mathbb{K}$ is a totally ordered field, then the following are equivalent:
	\begin{quote}
			\begin{description}
			
				\item[(a)] $\mathbb{K}$ is a real-closed field.
				\item[(b)] $\mathbb{K}(i)$ is an algebraically closed field.
			
			\end{description}
	\end{quote}
	\end{theorem}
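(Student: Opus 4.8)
The plan is to prove the two implications separately, treating $(b)\Rightarrow(a)$ as the routine direction and putting the real work into $(a)\Rightarrow(b)$. A remark I would use throughout: a totally ordered field $\mathbb{K}$ has characteristic $0$ and satisfies $x^2\ge 0$ for every $x$, so $-1$ is not a square; hence $i\notin\mathbb{K}$ and $[\mathbb{K}(i):\mathbb{K}]=2$.

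For $(b)\Rightarrow(a)$, assume $\mathbb{K}(i)$ is algebraically closed. Any irreducible polynomial over $\mathbb{K}$ has a root in $\mathbb{K}(i)$, which generates a subextension of degree dividing $2$, so every $\mathbb{K}$-irreducible polynomial has degree $1$ or $2$; in particular an odd-degree polynomial has a linear factor, which is Definition \ref{D: Real Closed}\textbf{(b)}. For Definition \ref{D: Real Closed}\textbf{(a)}, given $a\in\mathbb{K}$ I would take a square root $c+di\in\mathbb{K}(i)$ of $a$; comparing real and imaginary parts gives $2cd=0$ and $c^2-d^2=a$, so either $c^2=a$ (if $d=0$) or $d^2=-a$ (if $c=0$). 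Hence $\mathbb{K}$ is real closed.

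For $(a)\Rightarrow(b)$ I would first record two auxiliary facts. (i) In a real closed field the squares are exactly the nonnegative elements (immediate from Definition \ref{D: Real Closed}\textbf{(a)} and the order), so any sum of squares is again a square. (ii) Every $a+bi\in\mathbb{K}(i)$ has a square root in $\mathbb{K}(i)$: put $r=\sqrt{a^2+b^2}\in\mathbb{K}$ (available by (i)), note $r\ge|a|$ so $(r+a)/2$ and $(r-a)/2$ are nonnegative, and take $c=\sqrt{(r+a)/2}$, $d=\pm\sqrt{(r-a)/2}$ with the sign chosen so that $2cd=b$; then $(c+di)^2=a+bi$. The consequence of (ii) is that $\mathbb{K}(i)$ has no extension of degree $2$, since any such extension would be generated by a square root of one of its elements. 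Then I would invoke Galois theory: let $L/\mathbb{K}$ be an arbitrary finite extension; passing to a larger field I may assume $L\supseteq\mathbb{K}(i)$ and $L/\mathbb{K}$ Galois (separability is automatic in characteristic $0$). Write $|\mathrm{Gal}(L/\mathbb{K})|=2^k m$ with $m$ odd and let $H$ be a Sylow $2$-subgroup; then $[L^H:\mathbb{K}]=m$ is odd, so a primitive element of $L^H/\mathbb{K}$ has an odd-degree minimal polynomial, which by \textbf{(b)} must be linear, forcing $m=1$. Thus $\mathrm{Gal}(L/\mathbb{K})$, and hence $\mathrm{Gal}(L/\mathbb{K}(i))$, is a $2$-group; if the latter were nontrivial it would contain a subgroup of index $2$, whose fixed field would be a forbidden degree-$2$ extension of $\mathbb{K}(i)$. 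Therefore $L=\mathbb{K}(i)$, and since $L$ was an arbitrary finite extension of $\mathbb{K}$, the field $\mathbb{K}(i)$ is algebraically closed.

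The hard part will be $(a)\Rightarrow(b)$. Lemma (ii) needs the degenerate case $c=0$ (i.e. $a\le 0$, $b=0$) handled separately, and the closing argument relies on Sylow's theorem together with the fact that a nontrivial finite $p$-group has a subgroup of index $p$ — machinery a little outside the otherwise self-contained development — so one must decide whether to prove these or simply cite them. The $(b)\Rightarrow(a)$ direction should be entirely routine once $[\mathbb{K}(i):\mathbb{K}]=2$ is in hand. Applied with $\mathbb{K}=\starr$ and Theorem \ref{T: Algebraically Closed}, this yields the promised short proof that $\starr$ is real closed.
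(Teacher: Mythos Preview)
Your argument is correct and is essentially the classical Artin--Schreier proof via Galois theory and Sylow's theorem (as found, for instance, in Van Der Waerden). The only minor point worth tightening is the closing sentence of $(a)\Rightarrow(b)$: you conclude ``since $L$ was an arbitrary finite extension of $\mathbb{K}$, the field $\mathbb{K}(i)$ is algebraically closed,'' but what you actually need is that $\mathbb{K}(i)$ has no proper finite extension. This does follow, since any finite extension of $\mathbb{K}(i)$ is a finite extension of $\mathbb{K}$ and hence embeds in a Galois $L/\mathbb{K}$ containing $i$, which you have shown must equal $\mathbb{K}(i)$; it would be cleaner to say so explicitly.

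As for comparison with the paper: the paper does not prove this theorem at all. Immediately after the statement it writes ``We refer, again, to \cite{VanDerWaerden},'' and then uses the result (together with Theorem~\ref{T: Algebraically Closed} and Lemma~\ref{L: Adjoin i}) to deduce Corollary~\ref{C: Real Closed}. So your proposal goes well beyond what the paper does: you supply a complete self-contained proof where the paper simply cites the literature. The cost is that you import Sylow's theorem, the primitive element theorem, and the index-$p$ subgroup fact for $p$-groups, none of which are developed elsewhere in the thesis; this is presumably why the author chose to cite rather than prove. If you want to match the paper's level of self-containment you would need either to include those group-theoretic facts or, as the paper does, defer to a standard algebra reference.
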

	
	We refer, again, to \cite{VanDerWaerden}.
	
	\begin{lemma}\label{L: Adjoin i}
		The non-standard extension $\starc = \starr(i)$ in the sense that each $z \in \starc$ can be uniquely represented as $z = a + bi$ where $a, b \in \starr$.
	\end{lemma}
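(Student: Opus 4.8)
The plan is to prove existence and uniqueness of the representation separately, working directly with representing nets rather than invoking the Artin--Schreier machinery. For existence, given $z \in \starc$ I would pick a net $(c_\varep) \in \crplus$ with $z = \bra c_\varep \ket$, and for each $\varep \in \rplus$ decompose $c_\varep = a_\varep + b_\varep\, i$ with $a_\varep = \operatorname{Re} c_\varep$ and $b_\varep = \operatorname{Im} c_\varep$, so that $(\aep), (\bep) \in \mathr^{\rplus}$. Setting $a = \bra \aep \ket$ and $b = \bra \bep \ket$, both of which lie in $\starr$, the computation $a + b\,i = \bra \aep \ket + \bra \bep \ket \bra i \ket = \bra \aep + \bep i \ket = \bra c_\varep \ket = z$ finishes this half, using that the ring operations on $\starc$ are defined representative-wise and that $i = \bra i \ket$ under the embedding $\mathc \hookrightarrow \starc$ of Definition \ref{D: Embedding}.

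For uniqueness, suppose $a + bi = a' + b'i$ with $a,a',b,b' \in \starr$; then $a - a' = (b' - b)i$ in $\starc$. Choose real nets $(\sep)$ and $(\rep)$ with $\bra \sep \ket = a - a'$ and $\bra \rep \ket = b' - b$. Then $\bra \sep \ket = \bra \rep \ket \bra i \ket = \bra \rep\, i \ket$, so the set $A = \{\varep \in \rplus : \sep = \rep\, i\}$ belongs to $\scu$. But $\sep, \rep \in \mathr$, and a real number can equal a purely imaginary number only when both vanish; hence $A \subeq \{\varep : \sep = 0\}$ and $A \subeq \{\varep : \rep = 0\}$, so by property \textbf{(c)} of Definition \ref{D: Filter and Free Filter} both of those sets lie in $\scu$. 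Therefore $a - a' = \bra \sep \ket = 0$ and $b' - b = \bra \rep \ket = 0$, that is, $a = a'$ and $b = b'$.

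I expect no serious obstacle here; the lemma is essentially bookkeeping about representatives. The one point requiring a little care is the passage between an element of $\starr$ and a representing \emph{real} net — i.e., knowing that the copy of $\starr$ sitting inside $\starc$ consists precisely of the classes $\bra \aep \ket$ with $(\aep) \in \mathr^{\rplus}$ — but this is exactly how $\starr$ was set up in Section \ref{S: Order}, so it is available. With the lemma in hand, the Artin--Schreier theorem stated above together with Theorem \ref{T: Algebraically Closed} yields that $\starr$ is a real-closed field.
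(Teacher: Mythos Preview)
Your proof is correct and your existence argument is exactly the paper's approach: pick a representing net $(z_\varep)$, decompose each $z_\varep$ into real and imaginary parts $\aep + \bep i$, and set $a = \bra \aep \ket$, $b = \bra \bep \ket$. The paper's proof stops there; you go further and supply a uniqueness argument via the filter, which the paper omits entirely even though the lemma asserts uniqueness, so your write-up is in fact more complete than the original.
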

	\begin{proof}
		Let $z = \bra z_\varep \ket$ for some $(z_\varep) \in \crplus$. For each fixed $\varep \in \rplus$, $\zep = \aep + \bep i$. Then $a = \bra \aep \ket$ and $b = \bra \bep \ket$ where $\aep, \bep \in \mathr$ for all $\varep$.
	\end{proof}
		
	\begin{corollary}\label{C: Real Closed}
		The non-standard extension $\starr$ is a real-closed field.
	\end{corollary}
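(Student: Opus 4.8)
The goal is to conclude that $\starr$ is real-closed, and the natural route is to invoke the Artin–Schreier theorem just quoted. The plan is to verify its two hypotheses: first that $\starr$ is a totally ordered field, and second that $\starr(i)$ is algebraically closed. The first of these is available from the earlier development (Section \ref{S: Order} establishes that $\starr$ is a totally ordered subfield of $\starc$), so I would simply cite it. For the second hypothesis, Lemma \ref{L: Adjoin i} identifies $\starc$ with $\starr(i)$, since every $z \in \starc$ has a unique representation $a + bi$ with $a, b \in \starr$; and Theorem \ref{T: Algebraically Closed} says $\starc$ is algebraically closed. Chaining these, $\starr(i) = \starc$ is algebraically closed, so the Artin–Schreier theorem applies and yields that $\starr$ is real-closed.

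Concretely, the steps in order are: (1) recall that $\starr$ is a totally ordered field, so the Artin–Schreier theorem is applicable to $\mathbb{K} = \starr$; (2) by Lemma \ref{L: Adjoin i}, $\starr(i) = \starc$; (3) by Theorem \ref{T: Algebraically Closed}, $\starc$ is algebraically closed, hence so is $\starr(i)$; (4) therefore condition (b) of the Artin–Schreier theorem holds, so condition (a) holds as well, i.e.\ $\starr$ is real-closed. This is essentially a one-line deduction once the earlier machinery is in place.

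I do not anticipate a genuine obstacle here, since the corollary is designed to fall straight out of the Artin–Schreier theorem together with the two lemmas preceding it. The only point requiring a little care is making sure the hypothesis ``$\starr$ is a totally ordered field'' is genuinely in hand — this is where I would point explicitly to the order results of Section \ref{S: Order} — and making sure the identification $\starr(i) = \starc$ in Lemma \ref{L: Adjoin i} is interpreted as an equality of fields (not merely a bijection of underlying sets), which it is, since the representation $z = a+bi$ respects the ring operations. Beyond that, the proof is a direct citation.
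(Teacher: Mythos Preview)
Your proposal is correct and follows essentially the same route as the paper's own proof, which simply cites Theorem \ref{T: Algebraically Closed}, Lemma \ref{L: Adjoin i}, and the Artin--Schreier theorem. The one minor wrinkle is that the totally-ordered hypothesis you rightly flag is established in Section \ref{S: Order}, which in the paper actually comes \emph{after} this corollary; the paper's proof silently glosses over this, so your version is arguably more careful, but be aware it amounts to a forward reference.
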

	\begin{proof}
		From Theorem \ref{T: Algebraically Closed}, Lemma \ref{L: Adjoin i}, and in light of the Theorem of Artin-Schrier \cite{VanDerWaerden}, $\starr$ is a real-closed field.
	\end{proof}

\section{Order Relation in $\starr$}\label{S: Order}
	The question may arise as to whether the richness of $\starr$ destroys the total ordering that we hope to inherit from $\mathr$. This turns out not to be the case.

	\begin{definition}
		Let $\bra \aep \ket, \bra \bep \ket \in \starr$. Then $\bra \aep \ket < \bra \bep \ket$ if $\aep < \bep$ a.e. Alternatively, $\bra \aep \ket < \bra \bep \ket$ if $\{ \varep : \aep < \bep \} \in \scu$.
	\end{definition}

	\begin{theorem}[Totally Ordered Field]\label{T: Totally Ordered Field}$ $
	\begin{quote}
		\begin{description}
			
			\item[(a)] The non-standard real numbers $\starr$ are a totally ordered field.
			\item[(b)] The real numbers $\mathr$ are a totally ordered subfield of $\starr$.
			
		\end{description}
	\end{quote}
	\end{theorem}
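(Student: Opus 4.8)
The plan is to verify the field axioms compatible with the order directly from the definition, reducing each claim about $\starr$ to a statement that holds almost everywhere in $\mathr$ and then invoking Corollary \ref{C: Extension of Characterization} (the ultrafilter dichotomy) to push through. For part (a), I would proceed in three stages. First, well-definedness of $<$: if $\bra \aep \ket = \bra a'_\varep \ket$ and $\bra \bep \ket = \bra b'_\varep \ket$, then the sets $\{\varep : \aep = a'_\varep\}$ and $\{\varep : \bep = b'_\varep\}$ lie in $\scu$; intersecting with $\{\varep : \aep < \bep\}$ (also in $\scu$ by hypothesis) and using condition \textbf{(b)} of Definition \ref{D: Filter and Free Filter}, the set $\{\varep : a'_\varep < b'_\varep\}$ contains a member of $\scu$, hence is in $\scu$ by \textbf{(c)}. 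Second, trichotomy: for fixed representatives, $\rplus$ is partitioned into the three disjoint sets where $\aep < \bep$, $\aep = \bep$, and $\aep > \bep$; since their union is $\rplus \in \scu$, Corollary \ref{C: Extension of Characterization} says exactly one of them lies in $\scu$, which gives exactly one of $\bra\aep\ket < \bra\bep\ket$, $\bra\aep\ket = \bra\bep\ket$, $\bra\aep\ket > \bra\bep\ket$.

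Third, compatibility of $<$ with the field operations. To show $\bra\aep\ket < \bra\bep\ket$ implies $\bra\aep\ket + \bra\cep\ket < \bra\bep\ket + \bra\cep\ket$, observe that $\{\varep : \aep < \bep\} \subseteq \{\varep : \aep + \cep < \bep + \cep\}$ since the inequality is preserved pointwise in $\mathr$, so the larger set is in $\scu$ by \textbf{(c)}. Similarly, if $\bra\aep\ket, \bra\bep\ket > 0$, i.e. $\{\varep : \aep > 0\}$ and $\{\varep : \bep > 0\}$ are both in $\scu$, then their intersection is in $\scu$ by \textbf{(b)} and is contained in $\{\varep : \aep\bep > 0\}$, so that set is in $\scu$, giving $\bra\aep\ket\bra\bep\ket > 0$. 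Transitivity and irreflexivity follow by the same intersection-and-superset pattern. Since $\starc$ is already a field by Theorem \ref{T: Field} and $\starr = \starr(i) \cap \dots$ — more precisely, since $\starr$ is a subfield of $\starc$ by Lemma \ref{L: Adjoin i} (the $a$ and $b$ components are themselves equivalence classes of real-valued nets) — the ordered-field axioms are now all verified.

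For part (b), I would use the embedding $c \mapsto \bra c \ket$ of Definition \ref{D: Embedding}. It is a ring monomorphism onto its image (injectivity: if $\bra a \ket = \bra b \ket$ for real constants $a \neq b$, then $\{\varep : a = b\} = \varnothing \in \scu$, impossible), and it is order-preserving since for real $a < b$ the set $\{\varep : a < b\}$ is all of $\rplus \in \scu$, while conversely $\bra a \ket < \bra b \ket$ forces that set to be nonempty, hence $a < b$ in $\mathr$. Thus $\mathr$, identified with its image, is a totally ordered subfield of $\starr$.

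I do not expect a genuine obstacle here; the content is entirely the bookkeeping of transferring pointwise $\mathr$-facts across the ultrafilter. The one point that requires a little care — and the closest thing to a subtlety — is the well-definedness of $<$ on equivalence classes, since unlike equality the relation is not symmetric; but as sketched above this is handled by the standard intersect-with-an-$\scu$-set argument together with \textbf{(b)} and \textbf{(c)} of Definition \ref{D: Filter and Free Filter}. The appeal to Corollary \ref{C: Extension of Characterization} for trichotomy is the conceptual heart of the proof and the reason the ultrafilter (as opposed to a mere filter) is needed.
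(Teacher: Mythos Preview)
Your approach is correct and follows the same ultrafilter bookkeeping as the paper: trichotomy via the disjoint three-set partition and Corollary~\ref{C: Extension of Characterization}, and closure of the positive cone via the intersect-then-enlarge pattern from Definition~\ref{D: Filter and Free Filter}~\textbf{(b)},~\textbf{(c)}. You are in fact more thorough than the paper on two points. First, you check well-definedness of $<$ on equivalence classes, which the paper folds into the phrase ``we leave the reader to check that $\leq$ is an order relation.'' Second, and more notably, the paper's argument labeled \textbf{(b)} actually establishes that $\starr_+$ is closed under addition and multiplication --- i.e., the ordered-field axiom belonging to part~\textbf{(a)} --- and never explicitly verifies that the embedding $c \mapsto \bra c \ket$ of Definition~\ref{D: Embedding} is order-preserving; your treatment of~\textbf{(b)} supplies exactly that missing piece. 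One cosmetic remark: the fragment ``$\starr = \starr(i) \cap \dots$'' is garbled and should simply read that $\starr$ is a subfield of $\starc$ (this follows from Lemma~\ref{L: Adjoin i} or directly from Corollary~\ref{C: Real Closed}).
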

	\begin{proof}
		We leave reader to check that $\leq$ is an order relation on $\starr$ (i.e. reflexive, anti-symmetric, and transitive).
	\begin{description}	
		
		\item[(a)] Let $\bra \aep \ket, \bra \bep \ket \in \starr$. We must show that $\bra \aep \ket < \bra \bep \ket$, $\bra \aep \ket = \bra \bep \ket$, or $\bra \aep \ket > \bra \bep \ket$. Indeed, $\{ \varep \in \rplus : \aep < \bep \} \cup \{ \varep \in \rplus : \aep = \bep \} \cup \{ \varep \in \rplus : \aep > \bep \} = \rplus$ since $\mathr$ is totally ordered. By Corollary \ref{C: Extension of Characterization}, only one of the above disjoint sets may be in the ultrafilter $\scu$, as desired.
		
		\item[(b)] We have to show that $0 \notin \starr_+$ (which is evidently true), and that $\starr_+$ is closed under addition and multiplication. We shall leave addition to the reader and prove closure under multiplication. Let $\bra \aep \ket, \bra \bep \ket \in \starr_+$ with $\bra \aep \ket > 0$ and $\bra \bep \ket > 0$. Then $A =: \{ \varep \in \rplus : \aep > 0\} \in \scu$ and $B =: \{ \varep \in \rplus : \bep > 0\} \in \scu$. Considering $C =: \{ \varep \in \rplus : \aep\bep > 0\}$, let $\varep \in A \cap B$, then $\aep > 0$ and $\bep > 0$ so $\aep\bep > 0$ (in $\mathr$) implying $\varep \in C$. Therefore, $A \cap B \subseteq C$ and by \textbf{(c)} of Definition \ref{D: Filter and Free Filter}, $C \in \scu$.
		
	\end{description}
	\end{proof}

\section{Non-Standard Numbers}\label{S: Trichotomy}
	We classify the non-standard complex numbers into two distinct sets. We then explore the properties of and among these sets. Looking forward, we view $\mathbb{Q} \subset \starc$ (as an embedding in the spirit of Definition \ref{D: Embedding}) so that we may apply the order properties from Section \ref{S: Order}.

	\begin{definition}[Classification]\label{D: Trichotomy}\index{Finite Non-Standard Number}\index{Infinitesimal}\index{Infinitely Large Non-Standard Number}
		Let $z \in \starc$.
	\begin{quote}
		\begin{description}
			
			\item[(a)] If there exists an $n \in \mathn$ such that $|z| \leq n$ then $z$ is \textbf{finite}. We denote by $\scf(\starc)$ the set of finite numbers.
			\item[(b)] If $|z| < \frac{1}{n}$ for all $n \in \mathn$, then $z$ is \textbf{infinitesimal}. We denote by $\sci(\starc)$ the set of infinitesimal numbers.
			\item[(c)] If $n < |z|$ for all $n \in \mathn$, then $z$ is \textbf{infinitely large}. We denote by $\scl(\starc)$ the set of infinitely large numbers.
			
		\end{description}
	\end{quote}
	\end{definition}
	
	It is important to note that $\scf(\starc)$ and $\scl(\starc)$ are disjoint, as are $\scl(\starc)$ and $\mathc$. Furthermore, the finite numbers combined with the infinitely large numbers give all of $\starc$. That is, $\scf(\starc) \cup \scl(\starc) = \starc$. We also have $\mathc \cap \sci(\starc) = \{0\}$ and $\mathr \cap \sci(\starr) = \{ 0\}$.

	\begin{lemma}\label{L: Finite Integral Domain}
		The set of finite non-standard numbers $\scf(\starc)$ is an integral domain.
	\end{lemma}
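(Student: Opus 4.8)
The plan is to recognize that an integral domain is just a commutative ring with unity having no zero divisors, and to obtain $\scf(\starc)$ as a subring of the field $\starc$ (Theorem \ref{T: Field}). Since a field has no zero divisors and commutativity, associativity, and the unity axiom are inherited by any subring containing $1$, essentially all of the work is to check that $\scf(\starc)$ is closed under the ring operations and contains $0$ and $1$.

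First I would set up the modulus on $\starc$: for $z = \bra z_\varep \ket \in \starc$ we put $|z| = \bra |z_\varep| \ket \in \starr$, which is well defined since $z_\varep = z'_\varep$ a.e.\ implies $|z_\varep| = |z'_\varep|$ a.e. Given $z, w \in \scf(\starc)$, pick $n, m \in \mathn$ with $|z| \leq n$ and $|w| \leq m$. The pointwise estimates $|z_\varep + w_\varep| \leq |z_\varep| + |w_\varep|$ and $|z_\varep w_\varep| = |z_\varep|\,|w_\varep|$ hold for every $\varep \in \rplus$, hence a.e., so by the definition of the order and operations on $\starr$ we get $|z+w| \leq |z| + |w| \leq n+m$ and $|zw| = |z|\,|w| \leq nm$; since $n+m, nm \in \mathn$, both $z+w$ and $zw$ lie in $\scf(\starc)$. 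Also $|-z| = |z| \leq n$ gives closure under additive inverses, and $|0| = 0 \leq 1$, $|1| = 1 \leq 1$ put $0, 1 \in \scf(\starc)$. Thus $\scf(\starc)$ is a subring of $\starc$ with unity, hence a commutative ring with unity. Finally, if $z, w \in \scf(\starc)$ and $zw = 0$, then because $\starc$ is a field it has no zero divisors, so $z = 0$ or $w = 0$; therefore $\scf(\starc)$ has no zero divisors, and it is an integral domain.

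I do not expect a genuine obstacle here; the only step meriting a moment of care is confirming that $|\cdot|$ on $\starc$ satisfies the triangle inequality and is multiplicative, which is immediate from the corresponding facts on $\mathc$ applied $\varep$-wise to representing nets, together with the coordinatewise-modulo-$\scu$ definitions of the order and the field operations. As an alternative one could sidestep $|\cdot|$ by writing $z = a+bi$ via Lemma \ref{L: Adjoin i} and bounding $|a|$ and $|b|$ separately, but the modulus argument is the most direct route.
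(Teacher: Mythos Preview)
Your proposal is correct and follows essentially the same approach as the paper: show $\scf(\starc)$ is a unital subring of $\starc$ and then invoke Theorem~\ref{T: Field} (that $\starc$ is a field) to rule out zero divisors. The only difference is that the paper declares the subring verification ``straightforward'' and omits it, whereas you spell out the closure arguments via the modulus; your version is strictly more detailed but not a different route.
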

	\begin{proof}
		It is a straightforward proof by definition to show that $\scfstarc$ is a subring of $\starc$. It may be questionable to the reader as to whether $\scfstarc$ has zero divisors in $\starc$ since $\crplus$ is a ring with zero divisors and $\starc$ is simply $\crplus \setminus \sim$ (as in Section \ref{S: A Non-Standard Extension of C}).
		
		Indeed, since $x, y \in \scf(\starc) \subseteq \starc$ if $xy=0$ then either $x=0$ or $y=0$ since $\starc$ is a field. Hence, $\scf(\starc)$ is an integral domain. 
	\end{proof}
	
	\begin{lemma}\label{L: Maximal Convex}
		The set of infinitesimals $\sci(\starc)$ is a convex maximal ideal in $\scf(\starc)$ in the sense that $\sci(\starc)$ is a maximal ideal in $\scf(\starc)$ such that if $|x| \leq |y| \in \sci(\starc)$, then $x \in \sci(\starc)$.
	\end{lemma}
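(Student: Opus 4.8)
The plan is to establish, in turn: (i) $\sci(\starc)$ is an additive subgroup of $\scf(\starc)$ closed under multiplication by elements of $\scf(\starc)$, so it is an ideal; (ii) $\sci(\starc)$ is convex in the stated sense; and (iii) $\sci(\starc)$ is a maximal ideal. Throughout I use only the elementary properties of the modulus $|\cdot| : \starc \to \starr$ --- multiplicativity $|zw| = |z|\,|w|$, the triangle inequality $|z+w| \le |z| + |w|$, and $|z| = 0$ iff $z = 0$ --- together with Lemma \ref{L: Finite Integral Domain}, by which $\scf(\starc)$ is a commutative ring.

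Convexity is immediate from Definition \ref{D: Trichotomy}(b): if $|x| \le |y|$ and $y \in \sci(\starc)$, then $|x| \le |y| < \frac1n$ for all $n \in \mathn$, whence $x \in \sci(\starc)$. For the ideal properties, observe that $0 \in \sci(\starc)$, that $-x \in \sci(\starc)$ whenever $x \in \sci(\starc)$, and that $\sci(\starc) \subeq \scf(\starc)$ (take $n = 1$ in the defining inequality). If $x, y \in \sci(\starc)$ and $m \in \mathn$, then $|x + y| \le |x| + |y| < \frac1{2m} + \frac1{2m} = \frac1m$, so $x + y \in \sci(\starc)$; hence $\sci(\starc)$ is an additive subgroup of $\scf(\starc)$. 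For absorption, let $x \in \sci(\starc)$ and $a \in \scf(\starc)$, so $|a| \le M$ for some $M \in \mathn$; then for each $m \in \mathn$, $|ax| = |a|\,|x| \le M|x| < M \cdot \frac1{Mm} = \frac1m$, giving $ax \in \sci(\starc)$.

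The substantive step is maximality. First, $\sci(\starc)$ is a proper ideal, since $1 \in \scf(\starc)$ but $1 \notin \sci(\starc)$. Now let $K$ be an ideal of $\scf(\starc)$ with $\sci(\starc) \subsetneq K$, and fix $x \in K \setminus \sci(\starc)$. As $0 \in \sci(\starc)$, we have $x \ne 0$, so $x$ is invertible in the field $\starc$. Since $x$ fails the defining condition of $\sci(\starc)$, there is some $n \in \mathn$ with $|x| \ge \frac1n$; hence $|x^{-1}| = |x|^{-1} \le n$, so $x^{-1} \in \scf(\starc)$. Then $1 = x \cdot x^{-1} \in K$, because $K$ is an ideal of $\scf(\starc)$ and $x \in K$, $x^{-1} \in \scf(\starc)$; therefore $K = \scf(\starc)$. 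Consequently no proper ideal of $\scf(\starc)$ properly contains $\sci(\starc)$, i.e., $\sci(\starc)$ is maximal.

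The only point that goes beyond routine ordered-field bookkeeping --- and the step I expect a reader to pause over --- is the observation that a nonzero, non-infinitesimal element $x \in \starc$ has an inverse satisfying $|x^{-1}| \le n$ for some $n \in \mathn$, hence a finite inverse; this is exactly what forces $1$ into $K$. (One could instead deduce maximality \emph{a posteriori} from the fact, proved in Section \ref{S: SPM}, that the standard part map is a surjective ring homomorphism $\scf(\starc) \to \mathc$ with kernel $\sci(\starc)$; but the direct argument above is self-contained and uses no later material.)
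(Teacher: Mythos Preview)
Your proof is correct and follows essentially the same approach as the paper's: verify the ideal axioms via the triangle inequality and multiplicativity of $|\cdot|$, deduce maximality by showing any $x \in K \setminus \sci(\starc)$ has $|x^{-1}| \le n$ so that $x^{-1} \in \scf(\starc)$ and $1 \in K$, and read convexity straight off the definition. Your bookkeeping is in fact slightly tidier (choosing $\tfrac{1}{2m}$ and $\tfrac{1}{Mm}$ to land exactly on $\tfrac{1}{m}$, and explicitly noting $x \ne 0$ before inverting), and the parenthetical remark about the standard-part kernel is a nice alternative viewpoint, but the core argument is the same.
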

	\begin{proof}
		First we show that $\sci(\starc)$ is an ideal of $\scf(\starc)$. Clearly, $\scistarc \subseteq \scfstarc$. Let $x, y \in \scistarc$. Then for all $n \in \mathn$ $|x| < \frac{1}{n}$ and $|y| < \frac{1}{n}$. Then $|x + y| \leq |x| + |y| < \frac{2}{n}$ for all $n \in \mathn$. Therefore, $x + y \in \scistarc$. Next, let $x \in \scfstarc$, $y \in \scistarc$. Then there exists $n_1 \in \mathn$ such that $|x| \leq n_1$ and $|y| < \frac{1}{n}$ for all $n \in \mathn$. Hence, $|xy| < \frac{n_1}{n}$ for all $n \in \mathn$. Therefore, $xy \in \scistarc$. So $\scistarc$ an ideal of $\scfstarc$.
		
		Next we show that $\scistarc$ is maximal in $\scfstarc$. Suppose $K \subseteq \scfstarc$ is an ideal so that $\scistarc \subsetneqq K \subseteq \scfstarc$. So there exists $x \in K \setminus \sci(\starc)$. That is, there exists $n_1 \in \mathn$ such that $|x| \geq \frac{1}{n_1}$. Hence, $\frac{1}{|x|} \leq n_1$ and so $\frac{1}{x} \in \scf(\starc)$, which implies $x \frac{1}{x} = 1 \in K$ since $K$ is an ideal. Therefore $K = \scfstarc$ and $\scistarc$ is a maximal ideal in $\scfstarc$.
		
		The final item to prove is the convexity of the maximal ideal $\scistarc$, and this is straightforward. Let $|y| \in \scistarc$ with $|x| \leq |y|$. Then for all $n \in \mathn$, $|y| < \frac{1}{n}$ so $|x| \leq |y| < \frac{1}{n}$ putting $x \in \scistarc$. 
	\end{proof}

	It is now possible to imagine the factor ring $\scf(\starc) / \sci(\starc)$, but we wait for the next section to discuss it in any detail.

	\begin{definition}[Infinitesimal Relation]\label{D: Infinitely Close}\index{Infinitesimal Relation}
		The symbol $\approx$ denotes the infinitesimal relation. We write $z \approx 0$ \iff $z \in \sci(\starc)$. Similarly, $a \approx b$ \iff $a - b \approx 0$.
	\end{definition}
	
	We now explore some of the properties of the sets of non-standard numbers.

	\begin{corollary}
		The infinitesimal relation $\approx$ is an equivalence relation on $\starc$ that preserves addition in $\starc$ and multiplication by a scalar in $\scf(\starc)$. That is,
	\begin{quote}
			\begin{description}
			
				\item[(a)] For all $x, y, z, t \in \starc$, if $x \approx y$ and $z \approx t$, then $x + z \approx y + t$.
				\item[(b)] For all $x, y \in \starc$ and for all $\lambda \in \scf(\starc)$, if $x \approx y$, then $\lambda x \approx \lambda y$.
			
			\end{description}
	\end{quote}
		Additionally, $\approx$ preserves all ring operations in $\scf(\starc)$. That is to say,
	\begin{quote}
			\begin{description}
			
				\item[(c)] For all $x, y, z, t \in \scf(\starc)$, if $x \approx y$ and $z \approx t$, then $xz \approx yt$.
			
			\end{description}
	\end{quote}
	\end{corollary}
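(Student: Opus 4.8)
The plan is to reduce everything to the single structural fact established in Lemma~\ref{L: Maximal Convex}: that $\scistarc$ is an ideal of $\scfstarc$ which, in addition, absorbs products with finite elements. First I would dispatch the claim that $\approx$ is an equivalence relation. Reflexivity is immediate since $x - x = 0 \in \scistarc$. For symmetry, note that $\scistarc$ is in particular an additive subgroup of $\scfstarc$, so $x - y \in \scistarc$ forces $y - x = -(x-y) \in \scistarc$. Transitivity follows the same way: if $x - y$ and $y - z$ both lie in $\scistarc$, then so does their sum $x - z$, because an ideal is closed under addition. (Alternatively one may read all three off directly from the defining bound $|z| < \frac{1}{n}$ for all $n \in \mathn$.)

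For part (a) I would simply observe that $(x+z) - (y+t) = (x-y) + (z-t)$; since $x \approx y$ and $z \approx t$ give $x-y, z-t \in \scistarc$, the sum again lies in $\scistarc$ by closure under addition, hence $x+z \approx y+t$. This argument needs nothing beyond $\scistarc$ being an additive subgroup, so it is valid for arbitrary $x,y,z,t \in \starc$. For part (b), write $\lambda x - \lambda y = \lambda(x-y)$. Since $x \approx y$ we have $x-y \in \scistarc$ and $\lambda \in \scfstarc$, so the absorption property of the ideal (a finite number times an infinitesimal is an infinitesimal) gives $\lambda(x-y) \in \scistarc$, that is, $\lambda x \approx \lambda y$.

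Part (c) is the only step requiring a small trick, namely the standard ``add and subtract'' telescoping. I would write
\[ xz - yt = (x-y)z + y(z-t). \]
By hypothesis $x-y \in \scistarc$ and $z-t \in \scistarc$, while $z, y \in \scfstarc$ by assumption, so each summand $(x-y)z$ and $y(z-t)$ is a product of a finite number with an infinitesimal, hence lies in $\scistarc$ by the absorption property; their sum then lies in $\scistarc$ by closure under addition. Therefore $xz - yt \in \scistarc$, i.e. $xz \approx yt$.

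The only point to watch — and the one place the hypothesis $x,y,z,t \in \scfstarc$ is genuinely used — is why part (c) must restrict to $\scfstarc$ rather than all of $\starc$: the telescoping introduces the multipliers $z$ and $y$, and the ideal absorbs products only against \emph{finite} elements, so these multipliers must be known to be finite. Apart from this, the argument is entirely mechanical, so I do not expect any real obstacle.
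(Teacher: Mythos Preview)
Your proof is correct and follows essentially the same approach as the paper: both reduce everything to the fact that $\scistarc$ is an ideal in $\scfstarc$ together with the definition $a \approx b$ iff $a-b \in \scistarc$. The paper's proof is extremely terse (it merely cites the definition for (a) and (b) and the ideal property for (c)), whereas you supply the explicit details, including the telescoping identity $xz - yt = (x-y)z + y(z-t)$ for part (c), which is exactly what one would write to unpack the paper's one-line justification.
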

	\begin{proof}
		Parts \textbf{(a)} and \textbf{(b)} stem from Definition \ref{D: Infinitely Close}; specifically that $a \approx b$ \iff $a-b\approx 0$. Part \textbf{(c)} stems from the fact that $\sci(\starc)$ is an ideal in $\scf(\starc)$.
	\end{proof}

	The proofs of both of the following lemmas are found directly from Definition \ref{D: Trichotomy}.
	
	\begin{lemma}
		If $z \in \starc$ is infinitesimal, finite, or infinitely large, then so is $|z|$, respectively.
	\end{lemma}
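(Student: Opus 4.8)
The plan is to reduce the whole statement to one trivial observation: the $\starc$-modulus, when restricted to $\starr$, coincides with the order-theoretic absolute value, and hence fixes every non-negative element of $\starr$. Concretely, write $z = a + bi$ with $a, b \in \starr$, as permitted by Lemma \ref{L: Adjoin i}. Then $|z|$ is the non-negative square root of $a^2 + b^2$, which exists in $\starr$ because $a^2 + b^2 \geq 0$ and $\starr$ is real-closed by Corollary \ref{C: Real Closed}; in particular $|z| \in \starr$ and $|z| \geq 0$. Applying the same description to the element $|z| \in \starr \subset \starc$, whose imaginary part is $0$, gives $\bigl| |z| \bigr| = |z|$.

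First I would record that identity $\bigl| |z| \bigr| = |z|$ as above. After that, the three cases are immediate from Definition \ref{D: Trichotomy}, since each defining condition there is phrased purely in terms of the modulus of the element in question. If $z$ is finite, there is $n \in \mathn$ with $|z| \leq n$; since $\bigl| |z| \bigr| = |z| \leq n$, the element $|z|$ is finite. If $z$ is infinitesimal, then $|z| < \frac{1}{n}$ for every $n \in \mathn$, so $\bigl| |z| \bigr| < \frac{1}{n}$ for every $n$, i.e. $|z|$ is infinitesimal. If $z$ is infinitely large, then $n < |z|$ for every $n \in \mathn$, so $n < \bigl| |z| \bigr|$ for every $n$, i.e. $|z|$ is infinitely large.

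There is essentially no obstacle here; the only step deserving a line of justification is the identity $\bigl| |z| \bigr| = |z|$, and even that is forced by non-negativity of the modulus together with the fact that $\starr$ is real-closed. Everything else is a direct unwinding of Definition \ref{D: Trichotomy}, exactly as the remark preceding the lemma anticipates.
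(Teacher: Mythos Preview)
Your argument is correct and matches the paper's approach: the paper simply remarks that the proof is ``found directly from Definition \ref{D: Trichotomy}'' without elaboration, and your write-up is exactly that unwinding, with the one extra line justifying $\bigl||z|\bigr| = |z|$.
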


	\begin{lemma}\label{L: Reciprocal}
		Let $z \in \starc$ such that $z \not= 0$. Then $z \in \scl(\starc)$ \iff $\frac{1}{z} \in \sci(\starc)$.
	\end{lemma}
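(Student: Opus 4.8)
The plan is to prove the biconditional directly from Definition \ref{D: Trichotomy}, using only that $\starc$ is a field (Theorem \ref{T: Field}), that the modulus is multiplicative so that $|1/z| = 1/|z|$ whenever $z \neq 0$ (from $|z|\,|1/z| = |z\cdot 1/z| = |1| = 1$), and that on a totally ordered field reciprocation reverses $<$ among positive elements (the order transferred to $\starr$ in Section \ref{S: Order}).

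First I would treat the forward implication. Assume $z \in \scl(\starc)$, so $n < |z|$ for every $n \in \mathn$. Since $z \neq 0$ and $\starc$ is a field, $1/z$ is a well-defined nonzero element with $|1/z| = 1/|z| > 0$. Fix an arbitrary $m \in \mathn$; applying the hypothesis with $n = m$ gives $m < |z|$, hence $0 < 1/|z| < 1/m$, i.e. $|1/z| < 1/m$. As $m$ was arbitrary, $|1/z| < 1/n$ for all $n \in \mathn$, so $1/z \in \sci(\starc)$ by part (b) of Definition \ref{D: Trichotomy}. For the converse, assume $1/z \in \sci(\starc)$; again $1/z \neq 0$ because $z \neq 0$ in the field $\starc$, so $|1/z| > 0$ and $|z| = 1/|1/z|$. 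Given any $m \in \mathn$, infinitesimality of $1/z$ gives $0 < |1/z| < 1/m$, whence $m < 1/|1/z| = |z|$. Since $m$ was arbitrary, $n < |z|$ for all $n \in \mathn$, i.e. $z \in \scl(\starc)$ by part (c) of Definition \ref{D: Trichotomy}.

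There is essentially no obstacle here; the only points needing a word of care are that one must invoke $z \neq 0$ together with the field property to know $1/z$ is a legitimate nonzero element before manipulating it, and that one should record the multiplicativity of the modulus in order to pass between $|1/z|$ and $1/|z|$. If one preferred to avoid even that remark, the same argument runs at the level of representatives $(\aep) \in \crplus$ using the ``almost everywhere'' language of Section \ref{S: Probability Measure}, but the field-level argument above is shorter.
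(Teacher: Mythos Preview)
Your proof is correct and matches the paper's approach: the paper does not spell out a proof but simply remarks that the result follows directly from Definition \ref{D: Trichotomy}, and your argument is precisely the routine unwinding of that definition via reciprocation of positive elements in the ordered field $\starr$.
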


	\begin{example}[Canonical Infinitesimal]\label{E: Canonical Infinitesimal}\index{Canonical Infinitesimal}
		Let $\rho = \bra \varep \ket$ for $\varep \in \rplus$. Then $\rho > 0$, $\rho \approx 0$, and $\rho$ is called the canonical infinitesimal. Recall that we fixed $\scu$ in  Section \ref{S: Probability Measure}  to be an ultrafilter such that $I_n = (0, \frac{1}{n}) \in \scu$ for all $n \in \mathn$. We need $\bra 0 \ket < \bra \varep \ket < \bra \frac{1}{n} \ket$, where we think of $\bra 0 \ket$ and $\bra \frac{1}{n} \ket$ as elements of $\mathbb{Q}$ embedded in $\starc$ as in Definition \ref{D: Embedding}. Since $I_n = \{ \varep \in \rplus : 0 < \varep < \frac{1}{n}\} \in \scu$ for any $n \in \mathn$, the desired string of inequalities is satisfied, thus $\rho \in \sci(\starc)$.
	\end{example}
	
	\begin{example}[More Infinitesimals]
		Given Example \ref{E: Canonical Infinitesimal} and the fact that $\sci(\starc)$ is an ideal, we immediately know that $\rho^2, \rho^3, ..., \rho^n, ...$ are infinitesimals. Also, since $\starr$ is a real-closed field (Theorem \ref{T: Algebraically Closed}), the solutions to $x^2 = \rho$ for $x \geq 0$ exist and are unique. Therefore, $\sqrt{\rho}, \sqrt[3]{\rho}, ..., \sqrt[n]{\rho}, ...$ are infinitesimals.
	\end{example}
	
	\begin{example}[Finite, Non-Standard Number]
		It is possible to have a finite number $z \in \starc$ which is neither infinitesimal, nor standard (in the sense that it is in $\mathc$). Indeed, $5 + \rho$, where $\rho$ is the canonical infinitesimal from above, is a finite, non-standard number.
		
		A plethora of examples abound. For instance, a rational expression such as $\frac{4 + \rho^2}{3 + \rho}$ is a finite, non-standard number. Indeed, $\frac{4 + \rho^2}{3 + \rho} \approx \frac{4}{3}$.
	\end{example}
	
	\begin{example}[Canonical Infinitely Large Number]\label{E: Canonical Infinitely Large}\index{Canonical Infinitely Large Number}
		Let $(\nu_\varep) \in \mathn^{\mathr_{+}}$. Define
			\begin{align}
				\nu_\varep =	
				\begin{cases}
					n &\text{if $\varep \in I_n \setminus I_{n+1}$}\\ 
					1 &\text{if  $\varep \geq 1$}\notag
				\end{cases}
			\end{align}
		For $n = 1, 2, 3, ...$ We claim that $\bra \nu_\varep \ket$ is an infinitely large positive number. We must show that for all $m \in \mathn$, $\{ \varep \in \rplus : \nu_\varep > m \} \in \scu$. Recall that $\scu$ was fixed to be an ultrafilter such that $I_n = (0, \frac{1}{n}) \in \scu$ for all $n \in \mathn$. Let $m \in \mathn$ be arbitrary and fixed. Suppose $\epn \in I_m$. Then there exists $n \in \mathn$ for which $m < n$ and $\epn \in I_{n+1} \setminus I_n$. Therefore, $\nu_{\epn} = n > m$ and so $I_m \in \{ \varep \in \rplus : \nu_\varep > m\}$. Therefore by Definition \ref{D: Filter and Free Filter} part \textbf{(c)}, we have $\{\varep \in \rplus : \nu_\varep > m\} \in \scu$. As $m$ was arbitrary, this holds for all $m \in \mathn$. Therefore $\bra \nu_\varep \ket$ is infinitely large.
	\end{example}
	
	\begin{example}[Infinitely Large Numbers]
		Similarly, $\nu^2, \nu^3, ...$ are also infinitely large numbers in $\starn \setminus \mathn$. In fact, $\starn \setminus \mathn$ consists of infinitely large numbers only and $\scf(\starn) = \mathn$. That is to say, there are no infinitesimals in the non-standard natural numbers $\starn$.
	\end{example}
	
	\begin{example}[More Infinitely Large Numbers]
		In view of Lemma \ref{L: Reciprocal} and Example \ref{E: Canonical Infinitesimal}, the following are infinitely large numbers: $$\frac{1}{\rho}, \frac{1}{\rho^2}, ..., \frac{1}{\rho^n}, ..., \frac{1}{\sqrt{\rho}}, \frac{1}{\sqrt[3]{\rho}}, ..., \frac{1}{\sqrt[n]{\rho}}, ...$$
	\end{example}
	
	\begin{examples}In light of the examples in Section \ref{S: Functions} we have additional examples of infinitesimals and infinitely large numbers.
	\begin{quote}
		\begin{description}
		
			\item[(i)] Let $\ln \rho =: \bra \ln \varep \ket$. Then $\ln \rho$ is an infinitely large negative number.
			\item[(ii)] Let $\sin \rho =: \bra \sin \varep \ket$. Then $\sin \rho$ is a positive infinitesimal.
			\item[(iii)] Let $e^{\frac{1}{\rho}} =: \bra e^{\frac{1}{\varep}} \ket$. Then $e^{\frac{1}{\rho}}$ is an infinitely large number.
		
		\end{description}
	\end{quote}
	\end{examples}

\section{Standard-Part Mapping}\label{S: SPM}
	In Section \ref{S: Trichotomy} we used the infinitesimal relation $\approx$ to relate non-standard numbers in $\starc$ to standard numbers in $\mathc$. The Standard-Part Mapping ($\st$-mapping) is another way to express this relationship. Moreover it gives us a field isomorphism between the factor ring $\scf(\starc) / \sci(\starc)$ and $\mathc$.
	
	\begin{definition}[Standard-Part Mapping]\label{D: SPM}\index{Standard Part Mapping}
		We begin with $\scf(\starr)$ as the domain:
		\begin{quote}
			\begin{description}
			
				\item[(a)] $\st : \scf(\starr) \to \mathr$ is defined to be $\st (x) = \sup \{ r \in \mathr : r \leq x \}$. For $x \in \scf(\starr)$, we call $\st(x)$ the \textbf{standard part} of $x$.
			
			\end{description}
		\end{quote}
		We may extend the above to $\scf(\starc)$:
		\begin{quote}
			\begin{description}
			
				\item[(b)] $\st : \scf(\starc) \to \mathc$ is defined to be $\st (x + iy) = \st(x) + i\cdot \st(y)$.
				
			\end{description}
		\end{quote}
		We may also extend to $\scl(\starr)$:
		\begin{quote}
			\begin{description}
				
				\item[(c)] When $x \in \scl(\starr)$ we may extend the $\st$-mapping to $\st : \starr \to \mathr \cup \{\pm\infty\}$ by $\st(x) = \pm \infty$ for $x > 0$ in $\scl(\starr)$ or $x < 0$ in $\scl(\starr)$, respectively.
			
			\end{description}
		\end{quote}

	\end{definition}
	
	We now discuss the asymptotic expansion of the finite numbers.
	
	\begin{theorem}\label{T: Representation}
		Each $z \in \scf(\starc)$ has asymptotic expansion as $z = \st(z) + dz$ where $dz \approx 0$.
	\end{theorem}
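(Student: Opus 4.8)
The plan is to reduce the claim to the real case and then unwind the definition of the standard part as a supremum. First I would write $z = x + iy$ with $x, y \in \starr$ via Lemma \ref{L: Adjoin i}; since $|x| \le |z|$ and $|y| \le |z|$ and $z$ is finite, both $x$ and $y$ lie in $\scf(\starr)$, so $\st(z) = \st(x) + i\,\st(y)$ is defined by Definition \ref{D: SPM}. It then suffices to prove that $x - \st(x) \approx 0$ for every $x \in \scf(\starr)$: granting this, $z - \st(z) = \bigl(x - \st(x)\bigr) + i\bigl(y - \st(y)\bigr)$ is the sum of an infinitesimal and $i$ times an infinitesimal, hence itself infinitesimal, because $\sci(\starc)$ is an ideal in $\scf(\starc)$ by Lemma \ref{L: Maximal Convex} and $i \in \scf(\starc)$. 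Setting $dz := z - \st(z)$ then gives $z = \st(z) + dz$ with $dz \approx 0$ in the sense of Definition \ref{D: Infinitely Close}.

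For the real case, fix $x \in \scf(\starr)$ and choose $n \in \mathn$ with $|x| \le n$. Then the set $E := \{ r \in \mathr : r \le x\}$ contains $-n$ and is bounded above by $n$ in $\mathr$, so $s := \st(x) = \sup E$ is a genuine real number (this is the one point where finiteness of $z$ is essential). I would show $|x - s| < \tfrac{1}{k}$ for every $k \in \mathn$ by contradiction. If $x \ge s + \tfrac{1}{k}$ for some $k$, then $s + \tfrac{1}{2k} < s + \tfrac{1}{k} \le x$, so $s + \tfrac{1}{2k} \in E$, contradicting $s = \sup E$. If instead $x \le s - \tfrac{1}{k}$ for some $k$, then every $r \in E$ satisfies $r \le x \le s - \tfrac{1}{k}$, so $s - \tfrac{1}{k}$ is an upper bound of $E$ strictly below $s$, again contradicting that $s$ is the least upper bound. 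Hence $|x - s|$ is smaller than every $\tfrac{1}{k}$, i.e. $x - s \in \sci(\starr)$, which is exactly what is needed.

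The only step that takes any care is the supremum argument in the second paragraph: verifying that $E$ is nonempty and bounded above in $\mathr$, and treating the two sides of the inequality $|x-s| \ge \tfrac{1}{k}$ separately. Everything else is bookkeeping with the definitions of $\st$ and $\approx$ together with the ideal property of $\sci(\starc)$ already established in Lemma \ref{L: Maximal Convex}.
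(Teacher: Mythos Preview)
Your proof is correct and follows essentially the same route as the paper: reduce to the real case via $z = x + iy$, observe that $\st(x) = \sup\{r \in \mathr : r \le x\}$ exists by completeness of $\mathr$, and derive a contradiction from $|x - \st(x)| \ge \tfrac{1}{k}$ by splitting into the two cases $x > \st(x)$ and $x < \st(x)$. You are in fact slightly more careful than the paper in explicitly verifying that the set $E$ is nonempty and bounded above in $\mathr$, and in spelling out why the complex case follows from the real one via the ideal property of $\sci(\starc)$.
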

	\begin{proof}
		We consider the case where $z \in \scf(\starr)$. Define $dz =: z - \st(z)$ and observe $\st(z)$ exists by virtue of the completeness of $\mathr$.
		
		We now show that $dz \approx 0$. Stated in another way, we show $z \approx \st(z)$.
		
		Consider $S_z = \{ r \in \mathr : r \leq z\}$ and recall that $\st(z) = \sup S_z$. Suppose on the contrary that $\st(z) \not\approx z$, then there ezists $n \in \mathn$ such that $| \st (z) - z | \geq \frac{1}{n}$.
			\begin{description}
			
				\item[Case 1] If $z > \st(z)$, then $|\st(z) - z| = z - \st(z)$ and $z - \st(z) \geq \frac{1}{n}$ which implies $z \geq \st(z) + \frac{1}{n}$. Thus, $\st(z) + \frac{1}{n} \in S_z$ so $\st(z) + \frac{1}{n} \leq \st(z)$ since $\st(z) = \sup S_z$. Therefore, $\frac{1}{n} \leq 0$, a contradiction.
				\item[Case 2] If $z < \st(z)$ then $|\st(z) - z| = \st(z) - z \geq \frac{1}{n}$ which implies $\st(z) - \frac{1}{n} \geq z$. Since $\st(z)$ is the least upper bound for $S_z$ the set $\{ r \in S_z : \st(z) - \frac{1}{n} < r < \st(z) \}$ is non-empty. So there exists an $r_0 \in S_z$ such that $\st(z) - \frac{1}{n} < r_0 < \st(z)$ hence $r_0 > z$, a contradiction since $r_0 \in S_z$.
			
			\end{description}
		Therefore $z \approx \st(z)$ and $dz \approx 0$, so $z = \st(z) + dz$. The case of $z \in \scf(\starc)$ follows from $\st(x + iy) = \st(x) + i\st(y)$.
	\end{proof}

	\begin{corollary}
		Every $z \in \scf(\starc)$ may be \emph{uniquely} represented as $z = c + h$ where $c \in \mathc$ and $h \in \sci(\starc)$.
	\end{corollary}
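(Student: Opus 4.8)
The plan is to split this into existence and uniqueness, with existence being essentially a restatement of Theorem \ref{T: Representation}. For existence, given $z \in \scf(\starc)$, I would simply set $c = \st(z)$ and $h = dz$. Theorem \ref{T: Representation} guarantees $z = \st(z) + dz$ with $dz \approx 0$; by construction $\st(z) \in \mathc$ (via Definition \ref{D: SPM}(b), or Definition \ref{D: SPM}(a) in the real case), and $dz \approx 0$ means precisely $dz \in \sci(\starc)$ by Definition \ref{D: Infinitely Close}. So the desired decomposition exists.

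For uniqueness, suppose $z = c_1 + h_1 = c_2 + h_2$ with $c_1, c_2 \in \mathc$ and $h_1, h_2 \in \sci(\starc)$. Then $c_1 - c_2 = h_2 - h_1$. The left-hand side lies in $\mathc$ since $\mathc$ is a subfield, and the right-hand side lies in $\sci(\starc)$ since the infinitesimals are closed under subtraction (being an ideal of $\scf(\starc)$ by Lemma \ref{L: Maximal Convex}). As noted immediately after Definition \ref{D: Trichotomy}, $\mathc \cap \sci(\starc) = \{0\}$, so $c_1 - c_2 = 0$, whence $c_1 = c_2$ and therefore $h_1 = h_2$. This gives uniqueness.

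I do not anticipate a genuine obstacle here: the corollary is a packaging of Theorem \ref{T: Representation} together with the two elementary facts that $\sci(\starc)$ is closed under differences and that $\mathc \cap \sci(\starc) = \{0\}$. The only point worth stating carefully is why $\st(z)$ is well-defined and lands in $\mathc$ — but this is exactly the content of Definition \ref{D: SPM} and the completeness of $\mathr$ invoked in the proof of Theorem \ref{T: Representation}, so it can be cited rather than reproved.
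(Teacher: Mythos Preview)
Your proposal is correct and matches the paper's own proof essentially line for line: existence via $c=\st(z)$, $h=dz$ from Theorem~\ref{T: Representation}, and uniqueness by subtracting two decompositions and invoking $\mathc \cap \sci(\starc)=\{0\}$. The only cosmetic difference is that the paper compares an arbitrary decomposition against the specific one $c=\st(z)$, $h=dz$, whereas you compare two arbitrary decompositions; both arguments are the same.
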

	\begin{proof}
		The existence of such an expansion follows directly from Theorem \ref{T: Representation} for $c = \st(z)$ and $h = dz$. As to the uniqueness, let $z = c_1 + h_1$ for some $c_1 \in \mathc$ and $h_1 \in \sci(\starc)$. Then $c + h = c_1 + h_1$, implying $c - c_1 = h_1 - h$, hence $c - c_1 \approx 0$. Thus $c - c_1 = 0$ since $0$ is the only infinitesimal in $\mathc$.
	\end{proof}
		
	Perhaps surprisingly, the complex/real numbers may be presented as a factor ring of sets of non-standard complex/real numbers, respectively.
	
	\begin{theorem}
		$\scf(\starc) / \sci(\starc)$ is \emph{field} isomorphic to $\mathc$ under $[z] \to \st(z)$. Similarly, $\scf(\starr) / \sci(\starr)$ is order field isomorphic to $\mathr$ under $[x] \to \st(x)$.
	\end{theorem}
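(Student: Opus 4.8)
The plan is to show that the standard-part mapping $\st\colon \scf(\starc)\to\mathc$ of Definition \ref{D: SPM} is a surjective ring homomorphism whose kernel is exactly $\sci(\starc)$, and then to invoke the first isomorphism theorem for rings; the order assertion will follow by checking that the induced map both preserves and reflects the order on $\scf(\starr)$.

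First I would check the homomorphism property. That $\st$ is onto and $\st(1)=1$ is immediate from the embedding $\mathc\hookrightarrow\starc$ (Definition \ref{D: Embedding}), since $\st(c)=c$ for each $c\in\mathc$. For additivity and multiplicativity I would use the asymptotic expansion of Theorem \ref{T: Representation}: writing $z=\st(z)+dz$ and $w=\st(w)+dw$ with $dz,dw\approx 0$, we get $z+w=\big(\st(z)+\st(w)\big)+(dz+dw)$ and $zw=\st(z)\st(w)+\big(\st(z)\,dw+\st(w)\,dz+dz\,dw\big)$. In each case the parenthesized remainder is infinitesimal, because $\sci(\starc)$ is an ideal of $\scf(\starc)$ (Lemma \ref{L: Maximal Convex}) and $\st(z),\st(w)\in\mathc\subseteq\scf(\starc)$; since $\st(z)+\st(w)$ and $\st(z)\st(w)$ lie in $\mathc$, the uniqueness part of the asymptotic expansion forces $\st(z+w)=\st(z)+\st(w)$ and $\st(zw)=\st(z)\st(w)$.

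Next I would compute the kernel: $\st(z)=0$ iff $z=0+dz$ with $dz\approx 0$, i.e.\ iff $z\in\sci(\starc)$. Hence $\ker\st=\sci(\starc)$, and the first isomorphism theorem yields a ring isomorphism $\overline{\st}\colon\scf(\starc)/\sci(\starc)\to\mathc$, $[z]\mapsto\st(z)$; since both are fields, it is a field isomorphism. Running the identical argument over $\starr$ gives a field isomorphism $\scf(\starr)/\sci(\starr)\to\mathr$.

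Finally, for the order I would use that $\sci(\starr)$ is convex (Lemma \ref{L: Maximal Convex}), which is precisely what makes the relation ``$[x]\geq[0]$ iff some representative of $[x]$ is $\geq 0$ in $\starr$'' a genuine total order on $\scf(\starr)/\sci(\starr)$. To see $\overline{\st}$ preserves it, if $x'\approx x$ with $x'\geq 0$ then $\st(x)=\st(x')=\sup\{r\in\mathr:r\leq x'\}\geq 0$. Conversely, if $\st(x)>0$ then $x>0$ outright (it differs from a positive real by an infinitesimal), and if $\st(x)=0$ then $x\approx 0$, so $0$ itself represents $[x]$ and $[x]\geq[0]$. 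Thus $\overline{\st}$ is an order isomorphism. The only delicate point is verifying that convexity of $\sci(\starr)$ really does make this quotient order well-behaved (e.g.\ antisymmetric); everything else is routine checking from results already established.
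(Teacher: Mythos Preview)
Your argument is correct and essentially parallel to the paper's, but packaged more cleanly through the first isomorphism theorem. The paper verifies directly that $[z]\mapsto\st(z)$ is well-defined, injective, surjective, and a homomorphism (using the same asymptotic-expansion computation you give for addition and multiplication), whereas you collapse the well-definedness and injectivity checks into the single observation $\ker\st=\sci(\starc)$ and let the isomorphism theorem do the bookkeeping. Your treatment of the order is actually more careful than the paper's: the paper simply notes that $x_1<x_2$ implies $\st(x_1)\leq\st(x_2)$, while you explicitly set up the quotient order via convexity of $\sci(\starr)$ and check both preservation and reflection, which is what ``order isomorphism'' genuinely requires.
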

	\begin{proof}
		We shall focus on the real case, leaving the complex case to the reader. Recall that $\scf(\starr)$ is an integral domain and $\sci(\starr)$ is a maximal convex ideal in $\scf(\starr)$ (Lemmas \ref{L: Finite Integral Domain} and \ref{L: Maximal Convex}), thus the factor ring is a field. Recall, $[x] = \{ y \in \scf(\starr) : x - y \in \sci(\starr)\}$.
		
		We need to demonstrate that this function is well-defined, bijective, and a field homomorphism. Indeed, let $[x] = [y]$, then $\st(x) = \st(y)$ since $x - y \approx 0$, thus the function is well-defined. Next, suppose $\st(x) = \st(y)$. Then $x \approx y$, therefore $[x] = [y]$, and the function is injective. Let $r \in \mathr$. Then by viewing $r$ as an embedded element of $\scf(\starr) \subseteq \starr$, we have $\st(r) = r$, hence the function is surjective.
		
		Next we show that the function is a field homomorphism. For addition (with the help of Theorem \ref{T: Representation}) we have $x + y = \st(x) + \st(y) + dx + dy$ and $\st(x + y) = \st(\st(x) + \st(y)) = \st(x) + \st(y)$ since $\st(x), \st(y) \in \mathr$. For multiplication we have $xy = \st(x)\st(y) + \st(x)dy + \st(y)dx + dxdy$ and $\st(xy) = \st(x)\st(y)$. For division, assume $y \not\approx 0$ so that $\frac{x}{y} = z$ exists in $\scf(\starr)$. Then $x = zy$ hence $\st(x) = \st(z)\st(y)$.
		
		As to the preservation of the order relation, for $x_1 < x_2$ we have $\st(x_1) \leq \st(x_2)$ where the equality holds whenever $x_1 \approx x_2$. 
		
		Therefore, the function is a well-defined, bijective, field homomorphism from $\scf(\starr) / \sci(\starr) \to \mathr$, which preserves the order relation.
	\end{proof}
	
	\begin{examples} Having established the $\st$-mapping as a field isomorphism we apply it.
	\begin{quote}
		\begin{description}
		
			\item[(i)] $\st (5 + \rho) = 5$, just as $5 + \rho \approx 5$.
			\item[(ii)] $\st \left (\frac{7 + \rho^5}{8 + \sqrt{\rho}} \right ) = \frac{\st(7 + \rho^5)}{\st(8 + \sqrt{\rho})} = \frac{7}{8}$.
			\item[(iii)] 
				\begin{align}
					\st \left ( \frac{\sqrt{1 + dx} - 1}{dx} \right ) &= \st \left ( \frac{(\sqrt{1 + dx} - 1)(\sqrt{1 + dx} + 1)}{dx (\sqrt{1 + dx} + 1)} \right ) = \st \left (\frac{1 + dx - 1}{dx(\sqrt{1 + dx} + 1)} \right)\notag\\
													&= \st \left (\frac{1}{(\sqrt{1 + dx} + 1)} \right) = \frac{\st(1)}{\st(\sqrt{1 + dx} + 1)}\notag\\
													&= \frac{1}{\st(\sqrt{1 + dx}) + \st(1)} = \frac{1}{1 + 1} = \frac{1}{2}\notag
				\end{align}
			\item[(iv)] Let $dx > 0$. Then, $\st \left ( \frac{1}{dx^2 + dx} \right) = \st \left ( \frac{1}{dx(dx + 1)} \right) = \st \left( \frac{1}{dx} \right) \st \left ( \frac{1}{dx + 1} \right) = \infty \cdot 1 = \infty$.
		
		\end{description}
	\end{quote}
	\end{examples}
	
	For a method of teaching calculus based on the standard part mapping we refer to Keisler \cite{jKeisE} and Todorov \cite{tdTod2000a}.

\section{Non-Standard Extension of a Standard Set}\label{S: NSE of Set}

	We generalize the process of Section \ref{S: A Non-Standard Extension of C} to any set.
	
	\begin{definition}\label{D: Extension of Set}\index{Non-Standard Extension of a Set}
		Let $S \subeq \mathc$ and $(\sep) \in \mathc^{\rplus}$ be a net in $\mathc$. Then the \textbf{non-standard extension} $\stars$ of $S$ is given by $\stars = \{ \bra \sep \ket : \{\varep \in \rplus : \sep \in S\} \in \scu \}$.
	\end{definition}
	
	From the above definition we can see that $\sep \in S$ quite often, but not necessarily for all $\varep \in \rplus$. It just so happens that we can select some representative, say $\bep$, such that $\bep \in S$ for all $\varep \in \rplus$.
	
	\begin{lemma}[Regular Representatives]\label{L: Regular Representatives}
		Let $S \subseteq \mathc$. Then for each $\bra \sep \ket \in \stars$ there exists a net $(\xep) \in \crplus$ such that $\bra \sep \ket = \bra \xep \ket$ and $\xep \in S$ for all $\varep \in \rplus$. We say $(\xep)$ is a \textbf{regular representative} of $\bra \sep \ket$ relative to $S$.
	\end{lemma}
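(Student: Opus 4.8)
The plan is to take an arbitrary element $\bra \sep \ket \in \stars$ and surgically modify its representing net on the "bad" set of indices where $\sep \notin S$, so that the resulting net lies in $S$ everywhere while still defining the same equivalence class. First I would unpack the definition of $\stars$: since $\bra \sep \ket \in \stars$, the set $G =: \{\varep \in \rplus : \sep \in S\}$ belongs to $\scu$. In particular $G \neq \varnothing$ (as $\varnothing \notin \scu$ by part \textbf{(a)} of Definition \ref{D: Filter and Free Filter}), so we may fix some index $\varep_0 \in G$ and set $t =: s_{\varep_0} \in S$.

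Next I would define the modified net $(\xep) \in \crplus$ by
\begin{align}
	\xep =
		\begin{cases}
			\sep &\text{if $\varep \in G$}\\
			t &\text{if $\varep \in \rplus \setminus G$}\notag
		\end{cases}
\end{align}
By construction $\xep \in S$ for every $\varep \in \rplus$ (on $G$ because that is exactly the defining property of $G$, and off $G$ because $t \in S$). It remains to check that $\bra \xep \ket = \bra \sep \ket$, i.e. that $\xep = \sep$ a.e. But the set $\{\varep \in \rplus : \xep = \sep\}$ contains $G$, since $\xep = \sep$ by definition whenever $\varep \in G$; as $G \in \scu$, part \textbf{(c)} of Definition \ref{D: Filter and Free Filter} gives $\{\varep : \xep = \sep\} \in \scu$, which is precisely $(\xep) \sim (\sep)$ by Definition \ref{D: Equivalence Relation}. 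Hence $\bra \xep \ket = \bra \sep \ket$ and $(\xep)$ is the desired regular representative.

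There is essentially no obstacle here: the only subtlety worth flagging is the need for $G$ to be nonempty before one can pick the "filler value" $t$, and this is guaranteed by the filter axiom $\varnothing \notin \scu$. Everything else is a direct appeal to the superset axiom \textbf{(c)} for filters. I would present the argument in three short beats — extract $G \in \scu$ and a witness $t \in S$; define $(\xep)$ by the case split above; verify membership in $S$ everywhere and agreement with $(\sep)$ on $G \in \scu$ — and close.
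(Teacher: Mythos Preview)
Your proof is correct and is essentially the same argument as the paper's: both extract the ``good'' index set $A = G \in \scu$, leave the net unchanged there, and overwrite the values on the complement with a fixed element of $S$. Your version is in fact a bit more careful than the paper's, since you explicitly justify that a filler value in $S$ exists (by pulling $t = s_{\varep_0}$ from the net itself via $G \neq \varnothing$) rather than simply writing ``$s \in S$ arbitrarily.''
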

	\begin{proof}
		If $\bra \sep \ket \in \stars$ then $A =: \{ \varep \in \rplus : \sep \in S\} \in \scu$. Let $(\xep) \in \mathr^{\rplus}$ be the net defined by
			\begin{align}
				\xep =	
				\begin{cases}
					\sep &\text{if $\varep \in A$}\\ 
					s \in S \textnormal{ arbitrarily }&\text{if  $\varep \in \mathc \setminus A$}\notag
				\end{cases}
			\end{align}
		Then we have $\bra \sep \ket = \bra \xep \ket$, and $\xep \in S$ for all $\varep \in \rplus$.
	\end{proof}


	\begin{examples}$ $
	\begin{quote}
		\begin{description}
	
			\item[(i)] $\starr$ is the non-standard extension of $\mathr$ with elements $\bra \aep \ket \in \starr$ if $\aep \in \mathr$ a.e.
			\item[(ii)] $\starn$ is the non-standard extension of $\mathn$ with elements $\bra \aep \ket \in \starn$ if $\aep \in \mathn$ a.e.
			\item[(iii)] $\starrplus$ is the non-standard extension of $\mathr_+$ with elements $\bra \aep \ket \in \starrplus$ if $\aep \in \mathr_+$ a.e.
	
		\end{description}
	\end{quote}
	\end{examples}
	
	\begin{theorem}[Boolean Properties]\label{T: Boolean}
		Let $A, B \subseteq \mathc$. Then the following are properties of the non-standard extensions ${^*A}$ and ${^*B}$:
		\begin{quote}
			\begin{description}
			
				\item[(a)] ${^*\varnothing} = \varnothing$
				\item[(b)] ${^*(A \cup B)} = \stara \cup \starb$ 
				\item[(c)] ${^*(A \cap B)} = \stara \cap \starb$
				\item[(d)] ${^*(A \setminus B)} = \stara \setminus \starb$
				\item[(e)] $A \subseteq B$ \iff $\stara \subseteq \starb$
		
			\end{description}
		\end{quote}
	\end{theorem}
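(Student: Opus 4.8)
The plan is to reduce every part to a question about which subsets of $\rplus$ belong to the fixed ultrafilter $\scu$, exploiting the defining property that, for a net $(\sep) \in \mathc^{\rplus}$, one has $\bra \sep \ket \in \stars$ exactly when $\{\varep \in \rplus : \sep \in S\} \in \scu$. For a fixed net write $E_S =: \{\varep \in \rplus : \sep \in S\}$. The elementary pointwise identities $E_{A \cup B} = E_A \cup E_B$, $E_{A \cap B} = E_A \cap E_B$, and $E_{A \setminus B} = E_A \cap (\rplus \setminus E_B)$ hold for every net, so each Boolean property becomes an assertion about $\scu$ applied to these index sets. One preliminary point I would record first: the membership test above is stated through a representative, but it is independent of the representative chosen (a routine consequence of axioms (b) and (c) of Definition \ref{D: Filter and Free Filter}), so $\stars$ is well defined and we may pick convenient representatives; in particular Lemma \ref{L: Regular Representatives} lets us assume a representative of an element of $\stars$ takes values in $S$ for \emph{all} $\varep$.

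For (a), $E_\varnothing = \varnothing \notin \scu$ by Definition \ref{D: Filter and Free Filter}(a), so no class qualifies and ${^*\varnothing} = \varnothing$. For (b), writing $z = \bra \sep \ket$, we get $z \in {^*(A\cup B)}$ iff $E_A \cup E_B \in \scu$, which by Corollary \ref{C: Extension of Characterization} holds iff $E_A \in \scu$ or $E_B \in \scu$, i.e.\ iff $z \in \stara$ or $z \in \starb$; alternatively the inclusion $\stara \cup \starb \subseteq {^*(A\cup B)}$ is immediate from Lemma \ref{L: Regular Representatives}, since a regular representative of a point of $\stara$ already lands in $A \subseteq A \cup B$. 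For (c), $z \in {^*(A\cap B)}$ iff $E_A \cap E_B \in \scu$; the forward direction applies upward closure (Definition \ref{D: Filter and Free Filter}(c)) to $E_A \cap E_B \subseteq E_A$ and to $E_A \cap E_B \subseteq E_B$, while the reverse direction is exactly the intersection axiom (b).

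For (d), $z \in {^*(A\setminus B)}$ iff $E_A \cap (\rplus \setminus E_B) \in \scu$. If that set lies in $\scu$, then $E_A \in \scu$ by upward closure, and $E_B \notin \scu$ because $E_B \cap (\rplus \setminus E_B) = \varnothing \notin \scu$; conversely, if $E_A \in \scu$ and $E_B \notin \scu$ then $\rplus \setminus E_B \in \scu$ by Theorem \ref{T: Characterization of Ultrafilter}, whence $E_A \cap (\rplus \setminus E_B) \in \scu$ by axiom (b). Either way $z \in \stara \setminus \starb$ iff $z \in {^*(A \setminus B)}$. Finally for (e): if $A \subseteq B$ then $E_A \subseteq E_B$ for every net, so $E_A \in \scu$ forces $E_B \in \scu$, giving $\stara \subseteq \starb$; for the converse, argue contrapositively, taking, whenever $A \not\subseteq B$, a point $a \in A \setminus B$ and the constant net $\sep \equiv a$, for which $E_A = \rplus \in \scu$ but $E_B = \varnothing \notin \scu$, so $\bra a \ket \in \stara \setminus \starb$. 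The only part that uses more than the bare filter axioms — and hence the main obstacle, such as it is — is (d), which genuinely invokes the two-valuedness of $\scu$ from Theorem \ref{T: Characterization of Ultrafilter}; parts (a) through (c) and (e) need only the filter axioms together with, for (b), the finite-union Corollary \ref{C: Extension of Characterization}.
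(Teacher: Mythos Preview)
Your proof is correct and, for the one part the paper actually argues (part (e)), takes essentially the same route: upward closure of the index sets for the forward direction, and a constant net witnessing failure for the converse. The paper leaves (a)--(d) to the reader, and your ultrafilter-level arguments for those fill the gap cleanly; the only quibble is the closing remark that (d) alone uses more than the bare filter axioms, since the direction $E_A \cup E_B \in \scu \Rightarrow E_A \in \scu$ or $E_B \in \scu$ in (b) already needs the ultrafilter property through Corollary~\ref{C: Extension of Characterization}, as you in fact acknowledge in the same sentence.
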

	\begin{proof}
		We shall prove \textbf{(e)}, leaving the rest to the reader.
		\begin{description}
		
			\item[($\Rightarrow$)] Let $A \subseteq B$, $A_\varep =: \{ \varep \in \rplus : \zep \in A\}$ and $B_\varep =: \{ \varep \in \rplus : \zep \in B\}$. Then $\bra \zep \ket \in \stara$ \iff $A_\varep \in \scu$. Note that $A_\varep \subseteq \{ \varep \in \rplus : A_\varep \subseteq B_\varep\}$ which implies that $\{ \varep \in \rplus : A_\varep \subseteq B_\varep\} \in \scu$ and therefore $\stara \subseteq \starb$.
			\item[($\Leftarrow$)] Suppose $\stara \subseteq \starb$. Suppose to the contrary that $A \not\subset B$. Then there exists some $a \in A$ such that $a \not\in B$. Taking $\bra \aep \ket = a$ for all $\varep \in \rplus$ we have $\bra \aep \ket \in \starb$, moreover we have $\{ \varep \in \rplus : \aep \in B \} = \rplus$. A contradiction, since we assumed $A \not\subset B$. 
		
		\end{description}
	\end{proof}

\section{Internal Sets}\label{S: Internal Sets}
	
	As it happens, our previous construction of using a net of points in $\mathbb{R}$ to create a single point of $^*\mathbb{R}$ can work for other objects. In this section we discuss this process on a net of subsets of $\mathbb{R}_+$. It should be noted that the non-standard extensions of standard sets from the previous section are a special case of the more general internal sets.

	\begin{definition}\label{D: Internal}\index{Internal Set}
		Let $(\Sep) \in \mathcal{P}(\mathc)^{\rplus}$, and define $\bra \Sep \ket =: \{ \bra \zep \ket \in \starc : \zep \in \Sep$ a.e.$\}$. Sets of the form $\bra \Sep \ket$ are \textbf{internal sets}. The internal set $\bra \Sep \ket$ is \textbf{generated} by the net $(\Sep)$. The set of all internet subsets of $\starc$ is denoted ${\star\mathcal{P}(\mathc)}$. If $S$ is not internal, then it is \textbf{external}. The set of external sets are denoted $\mathcal{P}(\starc) \setminus \star\mathcal{P}(\mathc)$.
	\end{definition}
	
	\begin{example}
		The standard sets $\mathn, \mathn_0, \mathq, \mathr, \rplus, \mathc$ are all external. We wait until the Dedekind Completeness result (Section \ref{S: Completeness}) to prove this.
	\end{example}
	
	\begin{lemma}
		Let $S \subseteq \mathc$. Then $\stars$ is an internal set generated by the constant net $S_\varep = S$ for all $\varep \in \rplus$. That is to say, $\stars = \bra S \ket$.
	\end{lemma}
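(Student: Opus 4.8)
The plan is to observe that this lemma is essentially an unwinding of the two relevant definitions, so the proof is a short set-theoretic identification with no genuine obstacle. First I would write out the right-hand side: by Definition \ref{D: Internal}, applied to the constant net $S_\varep = S$, we have
\[
\bra S \ket = \{ \bra \zep \ket \in \starc : \zep \in S_\varep \textnormal{ a.e.} \} = \{ \bra \zep \ket \in \starc : \zep \in S \textnormal{ a.e.} \},
\]
and by Definition \ref{D: Predicate} the condition ``$\zep \in S$ a.e.'' means precisely that $\{ \varep \in \rplus : \zep \in S \} \in \scu$.

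Next I would write out the left-hand side: by Definition \ref{D: Extension of Set}, $\stars = \{ \bra \sep \ket : \{ \varep \in \rplus : \sep \in S \} \in \scu \}$. Comparing the two descriptions, each is exactly the set of equivalence classes in $\starc$ admitting a representative net whose index set of ``hits'' in $S$ belongs to $\scu$; since the defining conditions coincide verbatim, the two sets are equal. The only point that deserves a sentence of care is that both descriptions are phrased via a choice of representative, so I would note that the property ``$\zep \in S$ a.e.'' is representative-independent: if $(\zep)$ and $(w_\varep)$ represent the same class then $\zep = w_\varep$ a.e., and intersecting two sets of $\scu$ (by \textbf{(b)} of Definition \ref{D: Filter and Free Filter}, equivalently finite additivity in Theorem \ref{T: Ultrafilter implies Probability Measure}) keeps us in $\scu$, so $\zep \in S$ a.e. iff $w_\varep \in S$ a.e.

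With this observation the equality $\stars = \bra S \ket$ is immediate, and there is no real difficulty to overcome; the content of the lemma is conceptual rather than technical, namely that the non-standard extension construction of Section \ref{S: NSE of Set} is exactly the ``constant net'' special case of the internal-set construction of Definition \ref{D: Internal}. I expect the write-up to be only a few lines long.
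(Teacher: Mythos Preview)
Your proposal is correct and takes essentially the same approach as the paper: both simply unwind Definitions \ref{D: Extension of Set} and \ref{D: Internal} and observe that the defining conditions coincide verbatim. The paper's proof is in fact a one-liner, so your extra remark about representative-independence, while perfectly sound, is more care than the paper itself takes.
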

	\begin{proof}
		$\stars = \{ \bra \zep \ket \in \starc : \zep \in S \textnormal{ a.e.}\}$ by Definition \ref{D: Extension of Set}, $\stars$ is an internal set by Definition \ref{D: Internal}.
	\end{proof}

	So the non-standard extension of a set is a special case of an internal set. We may think of internal sets as a generalization of the process of creating a non-standard extension.

	\begin{example}[Infinitesimal Interval]
		Let $(0, \rho) = \{ x \in \starr : 0 < x < \rho \}$. Take as our net of real subsets $S_\varepsilon = (0, \varepsilon) = \{ x \in \mathbb{R} : 0 < x < \varepsilon\}$, where $\rho = \bra \varep \ket$. Then $\bra S_\varepsilon \ket = (0, \rho)$. Since $\rho$ is an infinitesimal, there is no standard set which could be a subset of $(0, \rho)$, thus it is a proper internal set.
	\end{example}

	\begin{example}[Intervals in $\starr$]
		More generally, let $\alpha, \beta \in \starr$ with $\alpha < \beta$. With the intervals $(\alpha, \beta), [\alpha, \beta], [\alpha, \beta), (\alpha, \beta]$ in $\starr$ defined in the usual sense, each is an internal set.
	\end{example}

	\begin{example}
		We briefly depart from the non-standard real numbers to discuss subsets of the non-standard natural numbers. Recall that $\nu$ is infinitely large from Example \ref{E: Canonical Infinitely Large}. Let $\Omega = \{ 1, 2, 3, ... , \nu\}$, which is to say, $\Omega = \{n \in $$^*\mathbb{N} : n \leq \nu\}$. We claim $\Omega$ is internal and if $\Omega_\varepsilon = \{ n \in \mathbb{N} : n \leq \alpha_\varepsilon$ a.e.$\}$, then $\bra \Omega_\varepsilon \ket = \Omega$.
	
		Indeed, let $\bra a_\varepsilon \ket \in \Omega$. Then $a_\varepsilon \in \mathbb{N}$ a.e. and $a_\varepsilon \leq \alpha_\varepsilon$ a.e. Then $a_\varepsilon \in \Omega_\varepsilon$ a.e., which implies $\bra a_\varepsilon \ket \in \bra \Omega_\varepsilon \ket$.
	
		Conversely, $\bra a_\varepsilon \ket \in \bra \Omega_\varepsilon \ket$ if and only if $a_\varepsilon \in \Omega_\varepsilon$ a.e. which implies $a_\varepsilon \in \mathbb{N}$ a.e. and $a_\varepsilon \leq \alpha_\varepsilon$ a.e. which implies $\bra a_\varepsilon \ket \in $$^*\mathbb{N}$ and $\bra a_\varepsilon \ket \leq \nu$. That is, $\bra a_\varepsilon \ket \in \Omega$. Sets of the form $\Omega$ are called \textbf{hyperfinite}.
	\end{example}
	
	\begin{example}
		Let $r \in \mathr$ and $\nu \in \starn$. Defining ${^*B(r, \frac{1}{\nu}}) =: \{ x \in \starr : r - \frac{1}{\nu} < x < r + \frac{1}{\nu} \}$, we have that ${^*B(r, \frac{1}{\nu}})$ is an internal set. Indeed, $\nu = \bra \nu_\varep \ket$ and ${^*B(r, \frac{1}{\nu}}) = \bra B(r, \frac{1}{\nu_\varep}) \ket$.
	\end{example}
	
\section{Completeness of $\starr$}\label{S: Completeness}
	Our focus turns to the completeness of $\starr$. First, we discuss \textbf{Dedekind Completeness} which is the non-standard analogue of the Supremum Principle in $\mathr$. We apply Dedekind Completeness to discuss the \textbf{Spilling Principles}. Next we present two versions of the \textbf{Saturation Principle}; one for open intervals in $\starr$ and one for internal sets in $\starr$, and as a direct corollary to the open interval version of the Saturation Principle we obtain the \textbf{Cantor Completeness}. 
	
	\begin{lemma}\label{L: Order Completeness Lemma}
		Let $\bra \Aep \ket \subseteq \starr$ be an internal set that is bounded from above by $\bra \bep \ket \in \starr$. That is, for all $\alpha \in \bra \Aep \ket$, $\alpha \leq \bra \bep \ket$. Then, $J =: \{ \varep \in \rplus :$ For all $x \in \Aep$, $x \leq \bep\} \in \scu$.
	\end{lemma}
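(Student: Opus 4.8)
The plan is to argue by contradiction, exploiting the fact that $\scu$ is two-valued (Theorem \ref{T: Characterization of Ultrafilter}). Suppose $J \notin \scu$. Then, by the characterization of ultrafilters, its complement $J^{c} = \{ \varep \in \rplus : \text{there exists } x \in \Aep \text{ with } x > \bep \}$ belongs to $\scu$.

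From $J^{c}$ I would manufacture an element of $\bra \Aep \ket$ that violates the bound. Define a net $(\xep) \in \crplus$ by choosing, for each $\varep \in J^{c}$, some $\xep \in \Aep$ with $\xep > \bep$ (possible by the defining property of $J^{c}$), and setting $\xep = 0$ for $\varep \notin J^{c}$. Since $J^{c} \in \scu$ and $\xep \in \Aep$ for every $\varep \in J^{c}$, we have $\xep \in \Aep$ a.e., so $\bra \xep \ket \in \bra \Aep \ket$ by Definition \ref{D: Internal}. Because $\bra \Aep \ket$ is bounded above by $\bra \bep \ket$, this forces $\bra \xep \ket \leq \bra \bep \ket$, i.e. $B =: \{ \varep \in \rplus : \xep \leq \bep \} \in \scu$. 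But for every $\varep \in J^{c}$ we chose $\xep > \bep$, so $B \cap J^{c} = \varnothing$; since $\scu$ is a filter, $\varnothing = B \cap J^{c} \in \scu$, contradicting part \textbf{(a)} of Definition \ref{D: Filter and Free Filter}. Hence $J \in \scu$.

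I do not expect any serious obstacle here; the only point requiring care is the possibility that some fibres $\Aep$ are empty. This is harmless: if $\Aep = \varnothing$ then $\varep \in J$ holds vacuously, so empty fibres only help place $J$ in $\scu$, and in the construction above we only ever need $\xep \in \Aep$ to hold almost everywhere, which is guaranteed on $J^{c}$. (In the contradiction case $\bra \Aep \ket$ is in fact automatically non-empty, since $\bra \xep \ket$ witnesses it.) Everything else is a routine passage between statements about the net $(\Aep)$ and statements about the internal set $\bra \Aep \ket$ through the ultrafilter.
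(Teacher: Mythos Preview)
Your argument is correct and is essentially the paper's own proof: both proceed by contradiction, use Theorem \ref{T: Characterization of Ultrafilter} to place the complement of $J$ in $\scu$, and then invoke choice to build a net $(\xep)$ with $\xep \in \Aep$ and $\xep > \bep$ almost everywhere, yielding an element of $\bra \Aep \ket$ that exceeds $\bra \bep \ket$. The only cosmetic difference is that the paper derives the contradiction directly from $\bra \xep \ket > \bra \bep \ket$, whereas you phrase it as $B \cap J^{c} = \varnothing \in \scu$; these are the same observation.
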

	\begin{proof}
		Suppose to the contrary that $J = \{ \varep \in \rplus :$ for all $x \in \Aep$, $x \leq \bep\} \notin \scu$. Since $\scu$ is an ultrafilter, $\rplus \setminus J =: \{ \varep \in \rplus :$ there exists $x \in \Aep$, such that $x > \bep\} \in \scu$. Let 
			\begin{equation}\label{E: X Epsilon}
				\Xep = \{ x \in \Aep : x > \bep\}
			\end{equation}
		and observe that $K =: \{ \varep \in \rplus : \Xep \not= \varnothing\} \in \scu$. We define $(\aep) \in \mathr^{\rplus}$ by $\aep \in \Xep$ if $\varep \in K$ and by $\aep = 1$ if $\varep \in \rplus \setminus K$ (Such a definition requires the Axiom of Choice). Let $\alpha = \bra \aep \ket$, then $\aep \in \Aep$ a.e. and $\aep > \bep$ a.e. and so by the definition of $\Xep$ in (\ref{E: X Epsilon}) we have $\alpha \in \bra \Aep \ket$ and $\alpha > \bra \bep \ket$, a contradiction that $\bra \Aep \ket$ is bounded from above by $\bra \bep \ket$.
	\end{proof}

	\begin{theorem}[Dedekind (Order) Completeness in $\starr$]\label{T: Order Completeness}\index{Supremum Completeness}
		If an internal subset $\sca$ of $\starr$ is bounded from above, then $\sup \sca$ in $\starr$ exists.
	\end{theorem}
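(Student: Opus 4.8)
The plan is to represent the internal set $\sca = \bra \Aep \ket$ by a net of subsets $\Aep \subseteq \mathr$ and to build the supremum of $\sca$ "coordinatewise", taking in each coordinate the supremum of $\Aep$ in $\mathr$ (which exists by the ordinary completeness of $\mathr$, once we know $\Aep$ is bounded above and nonempty for almost all $\varep$). First I would use the hypothesis that $\sca$ is bounded above in $\starr$ to fix an upper bound $\bra \bep \ket$, and invoke Lemma \ref{L: Order Completeness Lemma} to conclude that $J =: \{ \varep \in \rplus : \text{for all } x \in \Aep,\ x \leq \bep \} \in \scu$; in particular $\Aep$ is bounded above in $\mathr$ for almost all $\varep$. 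I would also need that $\Aep \neq \varnothing$ a.e.: this should follow from $\sca \neq \varnothing$ (if $\sca$ is empty the statement is vacuous or handled separately), since picking $\alpha = \bra \aep \ket \in \sca$ forces $\aep \in \Aep$ a.e.

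Next I would define $(\sep) \in \mathr^{\rplus}$ by $\sep = \sup \Aep$ when $\varep \in J$ and $\Aep \neq \varnothing$, and $\sep = 1$ (say) otherwise, and set $s = \bra \sep \ket \in \starr$. The claim is that $s = \sup \sca$. To see $s$ is an upper bound for $\sca$: take any $\alpha = \bra \aep \ket \in \sca$, so $\aep \in \Aep$ a.e.; for almost every $\varep$ (namely on the intersection with $J$ and with $\{\varep : \Aep \neq \varnothing\}$, both in $\scu$) we have $\aep \leq \sup \Aep = \sep$, hence $\aep \leq \sep$ a.e., i.e. $\alpha \leq s$. To see $s$ is the least upper bound: suppose $t = \bra \tep \ket \in \starr$ is an upper bound for $\sca$ with $t < s$. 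By definition of the order, $\tep < \sep$ a.e. On the set where $\tep < \sep = \sup \Aep$ and $\Aep \neq \varnothing$ (still in $\scu$), the real number $\tep$ is not an upper bound of $\Aep$, so there exists $x \in \Aep$ with $\tep < x$; using the Axiom of Choice pick such an $\xep \in \Aep$ for each such $\varep$, and $\xep = 1$ elsewhere. Then $\alpha =: \bra \xep \ket$ satisfies $\xep \in \Aep$ a.e., so $\alpha \in \sca$, and $\xep > \tep$ a.e., so $\alpha > t$, contradicting that $t$ is an upper bound for $\sca$. Hence no such $t$ exists and $s = \sup \sca$.

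The main obstacle I anticipate is bookkeeping with the "a.e." clauses: every step involves intersecting finitely many sets that lie in $\scu$ (the set $J$ from Lemma \ref{L: Order Completeness Lemma}, the set $\{\varep : \Aep \neq \varnothing\}$, the set witnessing $\alpha \in \sca$, the set witnessing $t < s$), and one must be careful that these intersections are still in $\scu$ — which they are, by property \textbf{(b)} of Definition \ref{D: Filter and Free Filter} — before extracting the required pointwise inequality. A secondary subtlety is the use of the Axiom of Choice to form the witnessing net $(\xep)$ in the least-upper-bound argument, exactly as in the proof of Lemma \ref{L: Order Completeness Lemma}; this is harmless but should be acknowledged. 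Apart from that, the argument is a routine transfer of the completeness of $\mathr$ through the ultrapower, with Lemma \ref{L: Order Completeness Lemma} doing the essential work of turning the internal upper bound into a pointwise upper bound a.e.
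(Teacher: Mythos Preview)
Your proposal is correct and follows essentially the same approach as the paper: invoke Lemma~\ref{L: Order Completeness Lemma} to get coordinatewise boundedness a.e., define $\sep = \sup \Aep$ on that large set, and verify that $\bra \sep \ket$ is the supremum of $\sca$. Your verification of the ``least'' part (assume a smaller upper bound $t$ and use the Axiom of Choice to build an element of $\sca$ exceeding $t$) is the standard contrapositive and is arguably cleaner than the paper's argument, and your explicit handling of the nonemptiness of $\Aep$ a.e.\ is a detail the paper glosses over.
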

	\begin{proof}
		By assumption there exists $\beta \in \starr$ such that for all $\alpha \in \sca$, $\alpha \leq \beta$. We have $\sca = \bra \Aep \ket$ and $\beta = \bra \bep \ket$ for some $(\Aep) \in \mathcal{P}(\mathr)^{\rplus}$ and $(\bep) \in \mathr^{\rplus}$, respectively. From Lemma \ref{L: Order Completeness Lemma} we have $J =: \{ \varep \in \rplus :$ for all $x \in \Aep$, $x \leq \bep\} \in \scu$. Define $(\sep) \in \mathr^{\rplus}$ by 
			\begin{align}
				\sep =	
					\begin{cases}
						\sup (\Aep) &\text{if $\varep \in J$}\\ 
						1 &\text{if  $\varep \in \rplus \setminus J$}\notag
					\end{cases}
			\end{align}
		We now show that $\bra \sep \ket$ is an upper bound of $\sca$. Let $\bra \aep \ket \in \sca$ and denote $L =: \{ \varep \in \rplus : \aep \in \Aep \}$. We have $L \in \scu$ hence $J \cap L \in \scu$ (\textbf{(b)} from Definition \ref{D: Filter and Free Filter}). Thus $\{ \varep \in \rplus : \aep \leq \sep \} \in \scu$ since $J \cap L \subseteq \{ \varep \in \rplus : \aep \leq \sep \}$ by the definition of $(\sep)$.
		
		Lastly, to show that $\bra \sep \ket$ is the \emph{least} upper bound, we show that for every $\bra \aep \ket \in \sca$, the set $\{ x \in \sca : \bra \aep \ket < x < \bra \sep \ket \}$ is non-empty. Define $\Sep = \{ x \in \rplus : \aep < x < \sep \}$ if $\varep \in J \cap L$ and arbitrarily (say, $\Sep = \rplus$) if $\varep \in \rplus \setminus (J \cap L)$. Then by the definition of the supremum in $\mathr$, $\Sep \not= \varnothing$ for all $\varep \in \rplus$ since $\sep = \sup \Aep$ for $\varep \in J$. Define $(\xep) \in \mathr^{\rplus}$ (using the Axiom of Choice) by $\xep \in \Sep$ if $\varep \in J \cap L$ and arbitrarily (say $\xep = 1$) for $\varep \in \rplus \setminus (J \cap L)$. We conclude that $\bra \aep \ket < \bra \xep \ket < \bra \sep \ket$ as desired.
	\end{proof}
		
	\begin{corollary}
		The standard sets $\mathn, \mathn_0, \mathq, \mathr, \rplus, \mathc$ are all external sets.
	\end{corollary}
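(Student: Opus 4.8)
The plan is to deduce everything from Dedekind Completeness (Theorem \ref{T: Order Completeness}): an internal subset of $\starr$ that is bounded above has a supremum in $\starr$. Equivalently, a subset of $\starr$ that is bounded above but has no least upper bound cannot be internal. So it suffices to check, for each of the five sets $\mathn, \mathn_0, \mathq, \mathr, \rplus$ (viewed inside $\starr$ via the embedding of Definition \ref{D: Embedding}), that it is bounded above while admitting no supremum; the case of $\mathc$ is then reduced to that of $\mathr$.

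First I would treat the five real sets at once. Each of them, as a subset of $\starr$, lies inside $\scf(\starr)$ (all integers, rationals, reals, positive reals are finite numbers) and each contains the embedded copy of $\mathn$. The key remark is that the upper bounds of $\mathn$ in $\starr$ are precisely the positive infinitely large numbers: if $b\ge n$ for every $n\in\mathn$ then $b$ is not finite (Definition \ref{D: Trichotomy}), so $b\in\scl(\starr)$ and $b>0$; conversely every positive $b\in\scl(\starr)$ exceeds every $n\in\mathn$, and indeed every finite number. In particular each of our sets $S$ is bounded above --- the canonical infinitely large $\nu$ of Example \ref{E: Canonical Infinitely Large} works --- and, because $\mathn\subseteq S\subseteq\scf(\starr)$, the set of upper bounds of $S$ also coincides with $\{b\in\scl(\starr):b>0\}$. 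But that set has no least element: if $s\in\scl(\starr)$ with $s>0$, then $s-1$ is again a positive infinitely large number and $s-1<s$. Hence $\sup S$ fails to exist in $\starr$, so $S$ is not internal, i.e.\ $S$ is external.

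For $\mathc$ I would intersect with $\starr$. The embedded copy of $\mathc$ meets $\starr$ in exactly the embedded copy of $\mathr$ (a constant net $\bra c\ket$ with $c\in\mathc$ represents an element of $\starr$ precisely when $c\in\mathr$). Now $\starr=\bra\mathr\ket$ is internal, and the intersection of two internal sets is internal, since $\bra S_\varep\ket\cap\bra T_\varep\ket=\bra S_\varep\cap T_\varep\ket$ --- immediate from Definition \ref{D: Internal} together with the filter axioms. So if $\mathc$ were internal, then $\mathc\cap\starr=\mathr$ would be internal too, contradicting the previous paragraph; hence $\mathc$ is external.

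The work here is essentially bookkeeping, so the ``main obstacle'' is conceptual: one must keep in mind that in this statement $\mathr$ (and likewise $\mathn$, $\mathq$, etc.) denotes the \emph{embedded standard copy} sitting inside $\starr$, which is a very different object from $\starr$ itself --- the latter being internal by definition. Once that is clear, the only input beyond Theorem \ref{T: Order Completeness} is the trivial observation that the positive infinitely large numbers have no minimum. The routine verifications I am skipping are that $\mathn$ is contained in each of $\mathn_0,\mathq,\mathr,\rplus$ and that each of these sits inside $\scf(\starr)$, both clear from the definitions.
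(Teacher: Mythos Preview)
Your argument is correct and, for $\mathr$, coincides with the paper's: assume internal, note the set is bounded above by any positive infinitely large number, invoke Theorem~\ref{T: Order Completeness} to get a supremum $s$, and derive a contradiction from the fact that $s-1$ is still an upper bound.

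The difference is one of scope. The paper explicitly treats only $\mathr$ and leaves the remaining cases to the reader. You instead give a uniform argument for all five real sets at once via the sandwich $\mathn\subseteq S\subseteq\scf(\starr)$, which forces every such $S$ to have the \emph{same} set of upper bounds (the positive infinitely large numbers), and then observe that this common set has no minimum. That is a tidy improvement over repeating the $\sup-1$ trick five times. Your treatment of $\mathc$ --- reducing to $\mathr$ by intersecting with the internal set $\starr=\bra\mathr\ket$ and using $\bra S_\varep\ket\cap\bra T_\varep\ket=\bra S_\varep\cap T_\varep\ket$ --- is not in the paper at all, and it is the right way to handle it, since $\mathc$ is not a subset of $\starr$ and so Theorem~\ref{T: Order Completeness} does not apply to it directly.
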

	\begin{proof}
		We shall only prove that $\mathr$ is external. Suppose on the contrary that it is internal. Since $\mathr \subset \starr$ we have that $\mathr$ is bounded from above by any infinitely large positive element of $\starr$. By Order Completeness (Theorem \ref{T: Order Completeness}), $\sup \mathr$ exists in $\starr$, and is infinitely large as an upper bound for $\mathr$. As $\sup \mathr - 1$ is also infinitely large, it is also an upper bound for $\mathr$. This is a contradiction since $\sup \mathr$ is the \emph{least} upper bound. Therefore, $\mathr$ is external.
	\end{proof}

	\begin{corollary}[Spilling Principles]\label{C: Spilling Principles}\index{Spilling Principles}
		If $S \subseteq \starr$ is an internal set, then:
	\begin{quote}
		\begin{description}
		
			\item[(a)] \textbf{Overflow of $\scf(\starr)$:} If $S$ contains arbitrarily large finite positive numbers, then $S$ contains arbitrarily small infinitely large positive numbers.
			\item[(b)] \textbf{Underflow of $\scf(\starr)$:} If $S$ contains arbitrarily small finite non-infinitesimal numbers, then $S$ contains arbitrarily large positive infinitesimals.
			\item[(c)] \textbf{Underflow of $\scl(\starr)$:} If $S$ contains arbitrarily small infinitely large positive numbers, then $S$ contains arbitrarily large finite positive numbers.
			\item[(d)] \textbf{Overflow of $\sci(\starr)$:} If $S$ contains arbitrarily large positive infinitesimals, then $S$ contains arbitrarily small finite non-infinitesimal positive numbers.
		
		\end{description}
	\end{quote}
	\end{corollary}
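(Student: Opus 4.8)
Since \textbf{(a)}--\textbf{(d)} come in dual pairs (a sup/inf duality, and the two ``barriers'' $\sci(\starr)/\scf(\starr)$ and $\scf(\starr)/\scl(\starr)$), the plan is to prove \textbf{(a)} in full and then describe the symmetric modifications giving \textbf{(b)}, \textbf{(c)}, \textbf{(d)}. The only real tool is Dedekind Completeness (Theorem \ref{T: Order Completeness}) together with its infimum form, which I would record first: if an internal $\sca \subeq \starr$ is bounded from below, then $\inf \sca$ exists in $\starr$, because $-\sca = \{-\alpha : \alpha \in \sca\}$ is internal (generated by $(-\Aep)$ when $\sca = \bra \Aep \ket$) and bounded above, so $\inf \sca = -\sup(-\sca)$. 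Two routine facts are also needed and should be stated: the intersection of internal sets is internal, since $\bra \Aep \ket \cap \bra B_\varep \ket = \bra \Aep \cap B_\varep \ket$ (because $\scu$ is closed under finite intersections); and every interval and half-line of $\starr$ with endpoints in $\starr$ is internal (as in the ``Intervals in $\starr$'' example, extended to half-lines). Hence cutting $S$ with such an interval keeps it internal and makes it bounded on whichever side we need.

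\textbf{Part (a).} Fix an arbitrary positive $\omega \in \scl(\starr)$; I must exhibit a positive $x \in \scl(\starr)$ with $x \in S$ and $x < \omega$. Put $S' = S \cap (0,\omega)$: it is internal, bounded above by $\omega$, and, since every finite positive number is $< \omega$, it still contains arbitrarily large finite positive numbers. By Theorem \ref{T: Order Completeness}, $\sigma := \sup S'$ exists in $\starr$; moreover $\sigma \le \omega$ and $\sigma > n$ for every $n \in \mathn$, so $\sigma \in \scl(\starr)$. If $\sigma \in S'$, it is the desired witness. If not, the definition of supremum gives $x \in S'$ with $\sigma - 1 < x < \sigma$; since $\sigma$ is infinitely large so is $\sigma - 1$, whence $x \in \scl(\starr)$, and $x \in S' \subeq S$ with $x < \omega$. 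As $\omega$ was arbitrary, $S$ contains arbitrarily small positive infinitely large numbers.

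\textbf{Parts (b)--(d).} Each follows the same template: truncate $S$ by an interval chosen according to the target witness, take a sup or an inf, verify it lands in the right class because the truncated set $S'$ retains the relevant ``arbitrarily small/large'' property, and, if the extremum is not attained, perturb it by an amount small enough to stay in that class. Concretely, for \textbf{(b)} truncate at a prescribed positive infinitesimal $\eta$ via $S' = S \cap [\eta,\infty)$ and use $\inf S'$ (it is $\ge \eta$ and $< \tfrac1n$ for all $n$, hence a positive infinitesimal); for \textbf{(c)} truncate at a prescribed $n \in \mathn$ via $S' = S \cap (n,\infty)$ and use $\inf S'$ (it is $\ge n$ and below every positive infinitely large number, hence finite); for \textbf{(d)} truncate at a prescribed standard $\delta > 0$ via $S' = S \cap (0,\delta)$ and use $\sup S'$ (it is $\le \delta$ and exceeds every positive infinitesimal, hence finite and non-infinitesimal). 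In the non-attained case, subtract or add $1$ near an infinitely large extremum, and move by half the extremum's own value near an infinitesimal or a non-infinitesimal one (halving preserves ``non-infinitesimal''; and since doubling preserves ``infinitesimal'', one can likewise upgrade ``$\ge$'' to ``$>$'' in the conclusions of \textbf{(b)} and \textbf{(d)}).

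\textbf{Main obstacle.} I expect the only friction to be bookkeeping rather than any genuine difficulty: one must check, separately for each part, that the truncated $S'$ really inherits the hypothesis (this is exactly what dictates the choice of cut-off point), pin down which of $\sci(\starr)$, $\scf(\starr)\setminus\sci(\starr)$, $\scl(\starr)$ the sup or inf lies in, and handle the not-attained case with the appropriate perturbation constant. The auxiliary ``intersection of internal sets is internal'' lemma and the half-line case of ``intervals are internal'' are the two statements that must be in place before any of this goes through.
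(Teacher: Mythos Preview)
Your proposal is correct and follows essentially the same approach as the paper: both arguments hinge on Dedekind Completeness applied to an internal truncation $S \cap (0,\ell)$, then locate an infinitely large element just below the supremum. Your version is marginally more streamlined---you truncate at the target $\omega$ from the outset, folding the paper's two stages (first ``$\scl(S_+)\neq\varnothing$'', then ``no infinitely large lower bound'') into a single step---and your explicit recording of the infimum form and the ``intersection of internal sets is internal'' lemma is a useful bit of hygiene the paper leaves implicit.
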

	\begin{proof} 
		We begin with \textbf{(a)}. Suppose there exist arbitrarily large finite positive numbers in $S$. In the case that $S$ is unbounded from above we may conclude that $S$ contains an infinitely large number.
		
		In the case where $S$ is bounded from above, we know from Theorem \ref{T: Order Completeness} (Dedekind Completeness) that $\alpha = \sup (A)$ exists in $\starr$. Furthermore, by the assumptions made on $S$, $\alpha$ is an infinitely large positive number. Since $\alpha = \sup (A)$, there exists $x \in S$ such that $\frac{\alpha}{2} < x < \alpha$ and since $\frac{\alpha}{2}$ is infinitely large, $x$ must be as well.
		
		Now that we have shown that the set of positive infinitely large numbers in $S$ is non-empty ($\scl(S_+) \not= \varnothing$), we show that it does not have a lower bound. Suppose to the contrary that $l \leq x$ for some $l \in \scl(\starr_+)$ and all $x \in \scl(S_+)$. Notice the set $S_l = \{ x \in S : 0 \leq x < l \}$ is internal since $S_l = S \cap (0 , l)$. Since $S \cap \scf(\starr_+) \subseteq S_l$ we have $\scl(S_l) \not=\varnothing$ by the same argument from before. So there exists an inifinitely large $x \in S$ with $0 < x < l$, a contradiction of the choice of $l$.
		
		Therefore there exist arbitrarily small infinitely large positive numbers in $S$. We leave \textbf{(b)} - \textbf{(d)} to the reader.
	\end{proof}

	\begin{theorem}[Saturation Principle for nested open intervals]\label{T: Interval Saturation}\index{Saturation Principle!Open Intervals}
		Every nested sequence of open intervals in ${^*}\mathbb{R}$ has a non-empty intersection.
	\end{theorem}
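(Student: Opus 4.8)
The plan is to realize the given intervals inside the ultrapower and then manufacture a common point by a diagonal argument built on the countable incompleteness of $\scu$.

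First I would fix representatives: write $J_n = \bra (\alpha_{n\varep}, \beta_{n\varep}) \ket$ for each $n$, where (after redefining the nets off a set not in $\scu$, in the spirit of Lemma~\ref{L: Regular Representatives}) we may assume $\alpha_{n\varep} < \beta_{n\varep}$ for \emph{every} $\varep \in \rplus$. The hypothesis $J_{n+1} \subseteq J_n$ must then be pushed down to the level of $\varep$. For this I would first record the internal-set analogue of part \textbf{(e)} of Theorem~\ref{T: Boolean}: if $\bra S_\varep \ket \subseteq \bra T_\varep \ket$ then $\{\varep : S_\varep \subseteq T_\varep\} \in \scu$ (otherwise, by maximality of $\scu$ the set of $\varep$ with $S_\varep \setminus T_\varep \neq \varnothing$ lies in $\scu$, and choosing a point of $S_\varep \setminus T_\varep$ there produces an element of $\bra S_\varep \ket$ not in $\bra T_\varep \ket$). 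Applied to the nested intervals this gives, for each $n$, a set $\{\varep : \alpha_{n\varep} \le \alpha_{n+1,\varep} < \beta_{n+1,\varep} \le \beta_{n\varep}\} \in \scu$. Intersecting the first $n$ of these, the sets
$$D_n := \{\varep \in \rplus : \alpha_{1\varep} \le \alpha_{2\varep} \le \cdots \le \alpha_{n\varep} < \beta_{n\varep} \le \cdots \le \beta_{2\varep} \le \beta_{1\varep}\}$$
belong to $\scu$ and decrease as $n$ grows.

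The crux --- and the main obstacle --- is that ``nested for all $n$'' is an infinitary hypothesis while membership a.e. is only \emph{finitely} additive: there is no reason for $\bigcap_n D_n$ to lie in $\scu$, so one cannot simply produce a single $\varep$-set of full measure on which all the endpoint inequalities hold and pick a point there. This is precisely the situation resolved by countable saturation, and the tool is the countable incompleteness of $\scu$ (Definition~\ref{D: Filter and Free Filter}\textbf{(e)}): the sets $I_n = (0, \tfrac{1}{n})$ fixed in Section~\ref{S: Probability Measure} satisfy $I_n \in \scu$ and $\bigcap_n I_n = \varnothing$. I would set $E_0 = \rplus$ and $E_n = D_n \cap I_n$ for $n \ge 1$; then $(E_n)_{n \ge 0}$ is a decreasing chain of members of $\scu$ with $\bigcap_n E_n = \varnothing$, so for every $\varep \in \rplus$ the index $n(\varep) := \max\{ n \ge 0 : \varep \in E_n \}$ is well defined and finite.

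Finally I would define a net $(x_\varep)$ by letting $x_\varep$ be the midpoint of the interval $(\alpha_{n(\varep)\varep},\, \beta_{n(\varep)\varep})$ when $n(\varep) \ge 1$ and $x_\varep = 0$ when $n(\varep) = 0$, and set $x = \bra x_\varep \ket \in \starr$. To verify $x \in J_m$ for an arbitrary fixed $m$: whenever $\varep \in E_m$ we have $n(\varep) \ge m$ and $\varep \in E_{n(\varep)} \subseteq D_{n(\varep)}$, so the monotone chain defining $D_{n(\varep)}$ gives $\alpha_{m\varep} \le \alpha_{n(\varep)\varep} < x_\varep < \beta_{n(\varep)\varep} \le \beta_{m\varep}$; hence $x_\varep \in (\alpha_{m\varep}, \beta_{m\varep})$ on the set $E_m \in \scu$, i.e. $x \in \bra (\alpha_{m\varep}, \beta_{m\varep}) \ket = J_m$. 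Since $m$ was arbitrary, $x \in \bigcap_n J_n$, which is therefore non-empty. I expect the entire difficulty of the argument to be concentrated in seeing that countable incompleteness is the right substitute for countable additivity; once the chain $(E_n)$ is in hand, the construction of $x$ and its verification are routine.
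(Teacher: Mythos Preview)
Your argument is correct and follows the same diagonal strategy as the paper's own proof: define, for each $\varep$, a depth index and pick a point in the intersection of the first that-many intervals, then verify that the resulting $\bra x_\varep \ket$ lands in every $J_m$ because the set where the depth is at least $m$ lies in $\scu$. The only notable difference is bookkeeping: the paper sets $\mu(\varep) := \max\{m : \bigcap_{n\le m}(a_{n\varep}, b_{n\varep}) \neq \varnothing\}$ directly (allowing $\mu(\varep)=\infty$) and never explicitly invokes the sets $I_n$, whereas you intersect with $I_n$ to force $n(\varep)$ finite---your version is slightly cleaner in that it sidesteps the edge case where all finite intersections at $\varep$ are nonempty but the full intersection is empty.
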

	\begin{proof} 
		Given $\alpha_n, \beta_n \in {^*}\mathbb{R}$ such that $\alpha_n < \beta_n$ for all $n \in \mathn$, and $\alpha_n \leq \alpha_{n+1} < \beta_{n+1} \leq \beta_n$, we have to show that there exists $\gamma \in {^*}\mathbb{R}$ such that $\alpha_m < \gamma < \beta_m$ for all $m \in \mathbb{N}$. Let $\alpha_n = \bra a_{n\varepsilon} \ket$ and $\beta_n = \bra b_{n\varepsilon} \ket$. From $\alpha_n < \beta_n$ for all $n \in \mathbb{N}$, if we define $A_n = \{ \varepsilon \in \mathbb{R}_+ : a_{n\varepsilon} < b_{n\varepsilon} \}$, then $A_n \in \mathcal{U}$.
	
		For $n=1$, without loss of generality we can assume that $A_1 = \mathbb{R}_+$. Indeed, suppose not, then $\mathbb{R}_+ \setminus A_1 \not= \varnothing$. Redefine:
		\begin{align}
			a'_{1\varepsilon} &= a_{1\varepsilon}\notag\\
			b'_{1\varepsilon} &= 	\begin{cases} 	b_{1\varepsilon} & \text{ if $\varepsilon \in A_1$,}
											\\
										a_{1\varepsilon} + 1 & \text{ if $\varepsilon \in \mathbb{R}_+ \setminus A_1$}
							\end{cases}\notag
		\end{align}
		Then we have $a'_{1\varepsilon} < b'_{1\varepsilon}$ for all $\varepsilon \in \mathbb{R}_+$. Also, $\alpha_1 = \bra a'_{1\varepsilon} \ket$ and $\beta_1 = \bra b'_{1\varepsilon} \ket$. So $A'_1 = \{ \varepsilon \in \mathbb{R}_+ : a'_{1\varepsilon} < b'_{1\varepsilon} \} = \mathbb{R}_+$. Thus justifying our assumption that $A_1 = \rplus$.

		Henceforth, we will keep the original notation, using $a_{1\varepsilon}$ and $b_{1\varepsilon}$ without the primes. So we assume that $a_{1\varepsilon} < b_{1\varepsilon}$ for all $\varepsilon \in \mathbb{R}_+$.

		Next, we define $\mu : \mathbb{R}_+ \to \mathbb{N} \cup \{ \infty \}$ by $\mu(\varepsilon) := \max\{m \in \mathbb{N} : \bigcap_{n=1}^{m}(a_{n\varepsilon}, b_{n\varepsilon}) \not= \varnothing\}$. Notice that $\mu(\varepsilon)$ is well defined because of our assumption that $A_1 = \rplus$.
	
		Next, choose $(c_\varepsilon) \in \mathbb{R}^{\mathbb{R}_+}$ such that $c_\varepsilon \in \bigcap_{n=1}^{\mu(\varepsilon)} (a_{n\varepsilon}, b_{n\varepsilon})$. Notice that the intersection is non-empty for all $\varepsilon \in \mathbb{R}_+$ by our choice of $\mu(\varepsilon)$. Define $\gamma \in {^*}\mathbb{R}$ by $\gamma = \bra c_\varepsilon \ket$.
	
		And now, for the heart of the proof. Let $m \in \mathn$. We must show that $\alpha_m < \gamma < \beta_m$, that is, that $\amep < \cep < \bmep$ a.e.
		
		Notice that $\{ \varep \in \rplus : \bigcap_{n =1}^{\mu(\varep)} (\anep, \bnep) \not= \varnothing \} = \rplus$ by definition of $\mu(\varep)$ and so it is in $\scu$. Consider $S_m = \{ \varep \in \rplus : \bigcap_{n=1}^{m} (\anep, \bnep) \not= \varnothing\}$. Since $\bigcap_{n=1}^{m} (\alpha_n, \beta_n) \not= \varnothing$ (in $\starr$), we know that $S_m \in \scu$.
		
		We now claim $S_m \subseteq \{\varep \in \rplus : \amep < \cep < \bmep\}$. Indeed, $\varep \in S_m \Leftrightarrow \bigcap_{n=1}^{m} (\anep, \bnep) \not= \varnothing$. This implies that $m \leq \mu(\varep)$, by the definition of $\mu(\varep)$, which implies $\cep \in (\amep, \bmep)$ (because $\cep \in \bigcap_{n=1}^{\mu(\varep)} (\anep, \bnep) \subseteq (\amep, \bmep)$) $\Leftrightarrow \varep \in \{\varep \in \rplus : \amep < \cep < \bmep\}$. By \textbf{(c)} of Definition \ref{D: Filter and Free Filter}, $\amep < \cep < \bmep$ a.e. for all $n \in \mathn$ since $m$ was arbitrary.
		
		Therefore, $\gamma = \bra \cep \ket \in \starr$ is an element of $\bigcap_{n=1}^{\infty}(\alpha_n, \beta_n)$, making the intersection non-empty.
	\end{proof}
		
	\begin{corollary}[Cantor Principle in $\starr$]\index{Cantor Principle}
		Every nested sequence of closed intervals in $\starr$ has a non-empty intersection.
	\end{corollary}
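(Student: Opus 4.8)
The plan is to reduce this to the Saturation Principle for nested open intervals (Theorem \ref{T: Interval Saturation}). Let $[\alpha_n, \beta_n]$, $n \in \mathn$, be a nested sequence of closed intervals in $\starr$, so that $\alpha_n \le \beta_n$ for every $n$ and $\alpha_n \le \alpha_{n+1} \le \beta_{n+1} \le \beta_n$. I would split into two cases according to whether any of the intervals degenerates to a single point.

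First, suppose $\alpha_N = \beta_N$ for some $N \in \mathn$. Put $\gamma := \alpha_N = \beta_N$. For $m \le N$ the monotonicity of the endpoints gives $\alpha_m \le \alpha_N = \gamma = \beta_N \le \beta_m$, so $\gamma \in [\alpha_m, \beta_m]$. For $m > N$ we have $\alpha_N \le \alpha_m \le \beta_m \le \beta_N$, and since $\alpha_N = \beta_N$ this forces $\alpha_m = \beta_m = \gamma$, so again $\gamma \in [\alpha_m, \beta_m]$. Hence $\gamma \in \bigcap_{n=1}^{\infty} [\alpha_n, \beta_n]$, which is therefore non-empty.

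Second, suppose $\alpha_n < \beta_n$ for all $n \in \mathn$. Then the open intervals $(\alpha_n, \beta_n)$ satisfy exactly the hypotheses of Theorem \ref{T: Interval Saturation}: indeed $\alpha_n < \beta_n$ for every $n$, and $\alpha_n \le \alpha_{n+1} < \beta_{n+1} \le \beta_n$, where the middle strict inequality is the case assumption at index $n+1$ and the outer inequalities are the given nesting of the closed intervals. Therefore $\bigcap_{n=1}^{\infty}(\alpha_n, \beta_n) \ne \varnothing$, and since $(\alpha_n, \beta_n) \subseteq [\alpha_n, \beta_n]$ for each $n$, we conclude $\bigcap_{n=1}^{\infty}[\alpha_n, \beta_n] \ne \varnothing$.

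There is essentially no technical obstacle here; the only point requiring care is the degenerate case, in which an interval collapses to a single point and the corresponding open interval is empty, so that Theorem \ref{T: Interval Saturation} cannot be applied directly. That case is disposed of by hand as above. Everything else is an immediate consequence of the open-interval Saturation Principle together with the trivial inclusion of each open interval in its closure.
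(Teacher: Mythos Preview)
Your proof is correct and follows the paper's approach: reduce to the Saturation Principle for nested open intervals (Theorem \ref{T: Interval Saturation}) via the inclusion $(\alpha_n,\beta_n)\subseteq[\alpha_n,\beta_n]$. Your version is in fact more careful than the paper's one-line argument, which simply asserts that ``every closed interval in $\starr$ contains an open interval'' without separating out the degenerate case $\alpha_N=\beta_N$; you handle that case explicitly and correctly.
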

	\begin{proof}
		As every closed interval in $\starr$ contains an open interval in $\starr$, by Theorem \ref{T: Interval Saturation}, every nested sequence of closed intervals in $\starr$ has a non-empty intersection.
	\end{proof}
		
	We now present the general version of the Saturation Principle. In the following $c^+$ denotes the cardinal number that is the successor of $c = \card(\mathr)$; notice that $\card(\rplus) = c$, as well. Recall that a collection of sets $\{S_\gamma\}_{\gamma \in \Gamma}$ has the \textbf{finite intersection property} if $\cap_{\gamma \in F} S_\gamma \not= \varnothing$ for each finite subset $F \subseteq \Gamma$. 
		
	\begin{theorem}[General Saturation Principle]\label{T: General Saturation}\index{Saturation Principle!General}
		The non-standard complex numbers $\starc$ are $c^{+}$-saturated in the sense that every family $\{S_\gamma\}_{\gamma \in \Gamma}$ of internal sets of $\starc$ with the finite intersection property and with $\card(\Gamma) \leq c$ has a non-empty intersection.
	\end{theorem}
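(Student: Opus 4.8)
The plan is to exploit that $\scu$ is countably incomplete together with a goodness property strong enough to handle index sets of size $c$. First I would reduce to the case $\card(\Gamma)=c$: if $\card(\Gamma)<c$, adjoin copies of the internal set $\starc$ (which equals $\bra\mathc\ket$, hence is internal) indexed by the missing elements; this changes neither the finite intersection property nor the intersection $\bigcap_\gamma S_\gamma$. Having done so, fix a bijection of $\Gamma$ with $\rplus$ and write each $S_\gamma=\bra S_{\gamma,\varep}\ket$ for some net $(S_{\gamma,\varep})\in\mathcal{P}(\mathc)^{\rplus}$, which is possible since each $S_\gamma$ is internal. For a finite $F\subeq\Gamma$ put $B_F=\{\varep\in\rplus:\bigcap_{\gamma\in F}S_{\gamma,\varep}\neq\varnothing\}$. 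The finite intersection property says exactly that $\bigcap_{\gamma\in F}S_\gamma\neq\varnothing$, which unwinds to $B_F\in\scu$; and $F\subeq F'$ implies $B_{F'}\subeq B_F$, so $F\mapsto B_F$ is a (reverse-inclusion) monotone map from the finite subsets of $\Gamma$ into $\scu$.

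Next I would bring in countable incompleteness. Recall $I_n=(0,\tfrac{1}{n})\in\scu$ with $\bigcap_n I_n=\varnothing$; set $I_0=\rplus$. Replace $B_F$ by $\widehat{B}_F:=B_F\cap I_{\card(F)}$; this is still monotone and still lies in $\scu$. Now I would invoke that $\scu$ can be chosen $c^{+}$-good, so that the monotone map $F\mapsto\widehat{B}_F$ admits a \emph{multiplicative} refinement $g$: a map on finite subsets of $\Gamma$ with $g(F)\subeq\widehat{B}_F$, $g(F\cup F')=g(F)\cap g(F')$, and $g(\varnothing)=\rplus$. From multiplicativity and $g(F)\subeq I_{\card(F)}$ one sees that for each fixed $\varep$ the set $F_\varep:=\{\gamma\in\Gamma:\varep\in g(\{\gamma\})\}$ is finite: if it had $n$ distinct members $\gamma_1,\dots,\gamma_n$, then $\varep\in g(\{\gamma_1\})\cap\dots\cap g(\{\gamma_n\})=g(\{\gamma_1,\dots,\gamma_n\})\subeq I_n$ for every such $n$, contradicting $\bigcap_n I_n=\varnothing$. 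Since $\varep\in g(F_\varep)\subeq B_{F_\varep}$, the set $\bigcap_{\gamma\in F_\varep}S_{\gamma,\varep}$ is non-empty; choose $c_\varep$ in it (and $c_\varep$ arbitrary when $F_\varep=\varnothing$), and put $c=\bra c_\varep\ket\in\starc$. For any fixed $\gamma_0\in\Gamma$, whenever $\varep\in g(\{\gamma_0\})$ we have $\gamma_0\in F_\varep$ and hence $c_\varep\in S_{\gamma_0,\varep}$; thus $\{\varep:c_\varep\in S_{\gamma_0,\varep}\}\supseteq g(\{\gamma_0\})\in\scu$, i.e. $c\in S_{\gamma_0}$. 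As $\gamma_0$ was arbitrary, $c\in\bigcap_{\gamma\in\Gamma}S_\gamma$.

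The main obstacle is precisely the appeal to goodness: the ultrafilter produced in Theorem \ref{T: Existence} is only guaranteed to be countably incomplete, which by itself yields $\aleph_1$-saturation but not $c^{+}$-saturation. So the genuine work is to upgrade that existence statement to produce a countably incomplete ultrafilter on $\rplus$ that is also $c^{+}$-good — available by Keisler's theorem on the existence of $\card(I)^{+}$-good ultrafilters on any infinite $I$ — and then to observe that goodness is exactly the combinatorial input used above. As a consistency check, the countable case $\card(\Gamma)\le\aleph_0$ (which already subsumes Theorem \ref{T: Interval Saturation}) needs no goodness: enumerate $\Gamma=\{\gamma_1,\gamma_2,\dots\}$, set $A_n=B_{\{\gamma_1,\dots,\gamma_n\}}\cap I_n\in\scu$ (a decreasing sequence with empty intersection), and for $\varep\in A_n\setminus A_{n+1}$ pick $c_\varep\in\bigcap_{k\le n}S_{\gamma_k,\varep}$; then $\bra c_\varep\ket\in S_{\gamma_k}$ for every $k$ because $\{\varep:c_\varep\in S_{\gamma_k,\varep}\}\supseteq A_k\in\scu$.
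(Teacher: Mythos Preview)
The paper omits this proof entirely, remarking only that it parallels the nested-interval argument (Theorem~\ref{T: Interval Saturation}) with ``more complex combinatorial arguments.'' Your proposal supplies exactly those combinatorics: the passage through a multiplicative refinement via a $c^{+}$-good ultrafilter is the standard route to $c^{+}$-saturation of ultrapowers, and your execution is correct. Your countable sanity check at the end is essentially a reproduction of the paper's own Theorem~\ref{T: Interval Saturation} in the general internal-set setting, and confirms that the proposed argument specializes properly.

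You are also right to flag the obstacle, and it is a genuine one. As written, the paper fixes in Section~\ref{S: Probability Measure} an ultrafilter $\scu$ produced by Theorem~\ref{T: Existence}, which guarantees only countable incompleteness. Countable incompleteness yields $\aleph_1$-saturation (your final paragraph) but not $c^{+}$-saturation in general; the latter genuinely requires $\scu$ to be $c^{+}$-good, and the paper neither imposes nor proves this. So strictly speaking the theorem as stated does not follow from the construction given: one must either strengthen the existence theorem to produce a countably incomplete $c^{+}$-good ultrafilter on $\rplus$ (available by Keisler's theorem, as you note), or weaken the conclusion to countable $\Gamma$. Your proposal makes this dependency explicit, which is an improvement over the paper's presentation.
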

	
	We omit the proof, but note that it is similar to the proof of Theorem \ref{T: Interval Saturation} but utilizes more complex combinatorial arguments. 
	
	\begin{remark}
		Observe that $\mathn, \mathq, \mathr, \rplus, \mathr^{n}, \mathc, \mathc^{n}, \mathcal{D}(\mathr^{d})$ all have cardinality at most $c$ and so may be used as the index set in Theorem \ref{T: General Saturation}. Here $\mathcal{D}(\mathr^{d})$ denotes the class of $C^\infty$-functions from $\mathr^d$ to $\mathc$ with compact support.
	\end{remark}
	
\section{Non-Standard Extension of a Function}\label{S: Functions}
	Just as a standard set may be extended to a non-standard set, we may extend an ordinary function to a non-standard function.
	
	\begin{definition}\label{D: NSE of Function}\index{Non-Standard Extension of a Function}
		Let $f: X \to \mathc$ where $X \subseteq \mathc$, be a (standard) function, and let $\bra \xep \ket \in \starx$. Without loss of generality we can assume that $\xep \in X$ for all $\varep \in \rplus$ by Lemma \ref{L: Regular Representatives}. We define the non-standard extension $\starf: \starx \to \starc$ by the formula $\starf(\bra \xep \ket) = \bra f(\xep) \ket$.
	\end{definition}
	
	\begin{remark}
		We drop the asterisks in front of $f$ for well known functions such as $e^x$, $\ln x$, etc.
	\end{remark}
	
	\begin{theorem}[Properties of Non-Standard Extension of $f$]\label{T: Properties of f-star} Consider $f$ and $\starf$ as in the above definition. Then,
	\begin{quote}
		\begin{description}
		
			\item[(a)] $\starf$ is an extension of $f$ in that $\starf \mid_{X} = f$ (Notice $X \subseteq \starx$ by Definition \ref{D: Extension of Set})
			\item[(b)] $\dom(\starf) = $ $^*(\dom(f))$, and
			\item[(c)] $\ran(\starf) = $ $^*(\ran(f))$
		
		\end{description}
	\end{quote}
	\end{theorem}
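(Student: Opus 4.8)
The plan is to dispatch the three parts in order of increasing effort, using only Definitions \ref{D: Extension of Set} and \ref{D: NSE of Function} together with Lemma \ref{L: Regular Representatives}. For \textbf{(a)}, I would take $x \in X$ and view it as the embedded element $\bra x_\varep \ket$ with $x_\varep = x$ for every $\varep \in \rplus$; the defining formula then gives $\starf(x) = \starf(\bra x_\varep \ket) = \bra f(x_\varep) \ket = \bra f(x) \ket$, which is exactly the embedding of $f(x)$, so $\starf \mid_X = f$. Part \textbf{(b)} is then immediate: $\dom(\starf) = \starx$ by Definition \ref{D: NSE of Function}, and since $X = \dom(f)$ this is precisely $\dom(\starf) = {}^*(\dom(f))$.

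Part \textbf{(c)} is the substantive one, and I would prove the two inclusions separately, writing $Y =: \ran(f) = \{ f(x) : x \in X \}$. For $\ran(\starf) \subeq {}^*Y$: given $w \in \ran(\starf)$, write $w = \starf(\bra x_\varep \ket)$ and use Lemma \ref{L: Regular Representatives} to choose the representative so that $x_\varep \in X$ for all $\varep$; then $w = \bra f(x_\varep) \ket$ with $f(x_\varep) \in Y$ for every $\varep$, hence $w \in {}^*Y$. For ${}^*Y \subeq \ran(\starf)$: given $w = \bra y_\varep \ket \in {}^*Y$, again pass to a regular representative so that $y_\varep \in Y$ for all $\varep$; since each $y_\varep$ lies in the range of $f$, the Axiom of Choice supplies a net $(x_\varep) \in X^{\rplus}$ with $f(x_\varep) = y_\varep$ for all $\varep$, and then $\bra x_\varep \ket \in \starx$ with $\starf(\bra x_\varep \ket) = \bra f(x_\varep) \ket = \bra y_\varep \ket = w$.

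The one step worth flagging is the backward inclusion in \textbf{(c)}: one must first descend from a.e.\ membership in $Y$ to an honest net lying inside $Y$ (this is exactly what Lemma \ref{L: Regular Representatives} provides) and then make an $\varep$-by-$\varep$ selection of $f$-preimages, which invokes the Axiom of Choice in the same manner as the proof of Theorem \ref{T: Order Completeness}. Everything else is routine bookkeeping with the ultrafilter and the embedding conventions, so I do not expect any genuine obstacle beyond correctly marshalling these two tools.
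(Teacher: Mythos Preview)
Your proposal is correct, and for parts \textbf{(a)} and \textbf{(b)} it is essentially identical to the paper's argument: embed $x\in X$ as the constant net and read off $\starf(x)=f(x)$, then note $\dom(\starf)=\starx={}^*(\dom(f))$ by definition.

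For part \textbf{(c)} your argument is actually more complete than the paper's. The paper's proof of \textbf{(c)} runs a chain of ``iff'' statements and then observes that $\{\varep:f(\xep)=\yep\}\subseteq\{\varep:\yep\in\ran(f)\}$, which by upward closure in $\scu$ yields only the forward inclusion $\ran(\starf)\subseteq{}^*(\ran(f))$; the reverse inclusion is never addressed. You, by contrast, prove both inclusions separately and correctly identify that the backward one requires first passing to a regular representative in $Y=\ran(f)$ (Lemma~\ref{L: Regular Representatives}) and then invoking the Axiom of Choice to select $\varep$-wise preimages. That extra step is exactly what is missing from the paper, so your version is the one to keep.
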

	\begin{proof} $ $
	\begin{quote}
		\begin{description}
		
			\item[(a)] $\bra \xep \ket \in X$ \iff there exists $x \in X$ such that $\{ \varep \in \rplus : \xep = x\} \in \scu$ \iff $\bra \xep \ket = \bra x \ket$. So $\starf(\bra \xep \ket) = \starf(\bra x \ket) = \starf(x) = f(x)$.
			\item[(b)] $\dom(\starf) = \starx = $ $^*(\dom(f))$.
			\item[(c)] $\bra \yep \ket \in \ran(\starf)$ \iff there exists $\bra \xep \ket \in \dom(\starf)$ such that $\starf(\bra \xep \ket) = \bra \yep \ket$ \iff $\{ \varep \in \rplus : f(\xep) = \yep\} \in \scu$. We have $\{ \varep \in \rplus : f(\xep) = \yep\} \subseteq \{\varep \in \rplus : \yep \in \ran(f)\}$, so $\{\varep \in \rplus : \yep \in \ran(f)\} \in \scu$. The last statement is equivalent to $\bra \yep \ket \in $ $^*(\ran(f))$. 
		
		\end{description}
	\end{quote}
	\end{proof}
	
	We now present some basic examples to fortify Definition \ref{D: NSE of Function}.
	
	\begin{examples}\label{E: NSF}$ $
	\begin{quote}
		\begin{description}
		
			\item[(i)] Consider $\ln x$ with $\dom(\ln x) = \rplus$ and $\ran(\ln x) = \mathr$. For the non-standard extension $\star\ln x$ we have $\dom(\star\ln x) = \starr_+$ and $\ran(\star\ln x) = \starr$ by Theorem \ref{T: Properties of f-star}. 
			
			Let $\bra \xep \ket \in \starr_+$. Without loss of generality (Lemma \ref{L: Regular Representatives}) we may assume that $\xep \in \rplus$ for all $\varep \in \rplus$. Therefore, $\star\ln \bra \xep \ket = \bra \ln \xep \ket$.
			\item[(ii)] Next, consider $e^x$ with $\dom(e^x) = \mathr$ and $\ran(e^x) = \rplus$. Once again, from Theorem \ref{T: Properties of f-star}, the non-standard extension $\star e^x$ has $\dom(\star e^x) = \starr$ and $\ran(\star e^x) = \starr_+$. The values may be computed via Definition \ref{D: NSE of Function} so that $\star e^{\bra \xep \ket} = \bra e^{\xep} \ket$. Notice we need not invoke Lemma \ref{L: Regular Representatives} since $\dom(e^x)$ has no peculiarities.
			\item[(iii)] The extensions of all trigonometric functions ($\sin$, $\cos$, etc.) are similar. In particular, consider $\sin x$ with $\dom(\sin x) = \mathr$ and $\ran(\sin x) = [-1, 1]$. We have $\dom(\star\sin x) = \starr$ and $\ran(\star\sin x) = \star[-1,1] \subset \starr$ with values $\star\sin \bra \xep \ket = \bra \sin \xep \ket$.
		
		\end{description}
	\end{quote}
	\end{examples}

\section{Transfer Principle by Example}\label{S: Transfer by Example}

Remember that the real numbers may be arrived at constructively with Dedekind Cuts whereby the Completeness of the reals is a theorem, yet there is the alternate approach of axiomatically defining the real numbers whereby the Completeness is an axiom. The same holds true for the non-standard reals. Up until now we constructed the non-standard reals using ultrafilters and an equivalence relation, but there is the alternative of an axiomatic approach. The Transfer Principle is one of the key axioms which allows us to connect the standard analysis to the non-standard analysis.

We proceed by recalling theorems proven in this paper using ultrafilters and the equivalence relation on $\mathc^{\rplus}$, and noting that formalizing the equivalent standard result, and putting asterisks in the proper places leads us to the non-standard result.

	\begin{example}
	In Theorem \ref{T: Field} we proved that the non-standard complex numbers $\starc$ is a field. We proved our assertion using the framework of ultrafilters, but this needn't be the case. Recall that $\starc = \mathc^{\rplus} / \sim$ and so $\starc$ is already a ring. So the assertion that $\starc$ is a field is reduced to saying that each element has a multiplicative inverse. This is formalized as: 
	\begin{equation}\label{E: Formal Field}
		(\forall z \in \starc)(\exists w \in \starc)[zw = 1].
	\end{equation}
	We now notice that (\ref{E: Formal Field}) may be obtained by replacing $\mathc$ with $\starc$ in the standard statement: $$(\forall z \in \mathc)(\exists w \in \mathc)[zw = 1],$$
	which is true since $\mathc$ is a field.
	\end{example}

	\begin{example}
	In Theorem \ref{T: Algebraically Closed} we proved that $\starc$ is an algebraically closed field. In the proof we relied heavily upon the equivalence classes of $\starc$ and the fixed ultrafilter. The formalization of Theorem \ref{T: Algebraically Closed} is:
	\begin{equation}\label{E: Formal Closed}
		(\forall P \in \starc[x])(\exists z \in \starc)[P(z) = 0].
	\end{equation}
	Notice (\ref{E: Formal Closed}) is obtained by replacing $\mathc[x]$ with $\starc[x]$ and $\mathc$ with $\starc$ in the standard statement: $$(\forall P \in \mathc[x])(\exists z \in \mathc)[P(z) = 0],$$
	which is true since $\mathc$ is an algebraically closed field.
	\end{example}

	\begin{example}
	In Theorem \ref{T: Totally Ordered Field} we proved, using the properties of ultrafilters, that $\starr$ was a totally ordered field. The formalization of the statement for \ref{T: Totally Ordered Field} is:
	\begin{equation}\label{E: Formal Total}
		(\forall x, y \in \starr)[(x > y) \vee (x > y) \vee (x = y)].
	\end{equation}
	Notice (\ref{E: Formal Total}) may be obtained by replacing $\mathr$ with $\starr$ in the standard statement: $$(\forall x, y \in \mathr)[(x > y) \vee (x > y) \vee (x = y)],$$
	which is true since $\mathr$ is a totally ordered field.
	\end{example}

	\begin{example}
	In Theorem \ref{T: Properties of f-star} we will show $\dom(\starf) = {\star(\dom(f))}$. Recall the definition of the domain $\dom(f)$ may be formalized as:
	\begin{equation}\label{E: Formal dom}
		(\forall x \in \mathr)[ x \in \dom(f) \Leftrightarrow (\exists y \in \mathc)[f(x) = y]].
	\end{equation}
	Note that replacing $\mathr$ with $\starr$, $\mathc$ with $\starc$, $\dom(f)$ with ${\star(\dom(f))}$ and $f$ with $\starf$ in (\ref{E: Formal dom}) we obtain: $$(\forall x \in \starr)[x \in {\star(\dom(f))} \Leftrightarrow (\exists y \in \starc)[\starf(x) = y]].$$ We now note that the right side of the if and only if statement is what we mean by $x \in \dom(\starf)$, and therefore ${\star(\dom(f))} = \dom(\starf)$ as was already proven.
	\end{example}

	\begin{example}
	In Theorem \ref{T: Properties of f-star} we will show $\ran(\starf) = {\star(\ran(f))}$. Recall the definition of the range $\ran(f)$ may be formalized as:
	\begin{equation}\label{E: Formal ran}
		(\forall y \in \mathc)[ y \in \ran(f) \Leftrightarrow (\exists x \in \dom(f))[f(x) = y]].
	\end{equation}
	Note that by replacing $\mathc$ with $\starc$, $\ran(f)$ with ${\star(\ran(f))}$, $\dom(f)$ with ${\star(\dom(f))}$, and $f$ with $\starf$ in (\ref{E: Formal ran}) we obtain: $$(\forall y \in \starc)[ y \in {\star(\ran(f))} \Leftrightarrow (\exists x \in {\star(\dom(f))}[\starf(x) = y]].$$ As ${\star(\dom(f))} = \dom(\starf)$, the right side of the if and only if statement is what we mean by $x \in \ran(\starf)$, and therefore ${\star(\ran(f))} = \ran(\starf)$ as was already proven.
	\end{example}

From just these five examples we see that by placing asterisks in the right location we arrive at a non-standard result which we have already proven "the hard way" using ultrafilters. This is not simply a coincidence; indeed, we are hinting at a powerful general theorem which asserts the validity of a standard result if and only if the non-standard result is likewise valid; this is the Transfer Principle. Before we describe the Transfer Principle in any detail, we present a cautionary example.

	\begin{warning}
	The Completeness of the real numbers may be stated as follows: $$\textnormal{Any bounded subset of $\mathr$ has a supremum.}$$ Casually putting asterisks as we have done before yields the statement: $$\textnormal{Any bounded subset of $\starr$ has a supremum.}$$ Whereas the standard result is certainly true, the "transferred" statement is \emph{not} true. As a counterexample consider the set $\scf(\starr) \subset \starr$. It is bounded by any infinitely large element of $\starr$, yet $\scf(\starr)$ does not have a supremum (as demonstrated earlier).
	\end{warning}
	
	In order to rigorously state the Transfer Principle, we must clearly define the mathematical language with which we shall speak.
	
\section{Logical Basis for Transfer}\label{S: Logic for Transfer}

The Transfer Principle has quite an extensive logical foundation, and for the purposes of this text, we shall only take what is absolutely necessary for the \emph{statement} of the Transfer Principle. We shall not concern ourselves with the details of the proof (save but to cite it). We begin with a discussion of our framework.

	\begin{definition}[Superstructure]\label{D: Superstructure}\index{Superstructure}
	Let $S$ be an infinite set. The \textbf{superstructure}, denoted $V(S)$, on $S$ is the union $$V(S) = \bigcup_{n = 0}^{\infty} V_n(S),$$ where the $V_n(S)$ are inductively defined by $V_0(S) = S$, $V_1(S) = S \cup \mathcal{P}(S)$, ..., $V_{n+1}(S) = V_n(S) \cup \mathcal{P}(V_n(S))$.
	
	Elements of $V(S)$ are \textbf{mathematical objects}, elements of $V(S) \setminus S$ are \textbf{sets}, and elements of $S$ are \textbf{individuals}. We do not think of individuals as sets. To understand this, imagine we are in the superstructure $V(\mathr)$; we have $5 \in \mathr$ and $\pi \in \mathr$, but we would never say $5 \in \pi$.
	\end{definition}

	\begin{theorem}[Properties of $V_n(S)$ and $V(S)$]
	$ $
		\begin{quote}
		\begin{description}
		
		\item[(a)] We have $S = V_0(S) \subset V_1(S) \subset ... \subset V(S)$, which implies $\card(S) < \card(V_1(S)) < ... < \card(V(S))$. Similarly, $S = V_0(S) \in V_1(S) \in ... \in V(S)$.
		\item[(b)] Each $V_n(S)$ is \textbf{transitive} in the superstructure in the sense that $A \in V_n(S)$ implies either $A \in S$ ($A$ is an individual) or $A \subset V_n(S)$ ($A$ is a set). Written another way, $V_n(S) \subseteq S \cup \mathcal{P}(V_n(S))$, for all $n \in \mathn$.
		\item[(c)]We also have that the superstructure itself is transitive, which is to say that if $A \in V(S)$, then either $A \in S$ or $A \subset V(S)$, that is, $V(S) \subseteq S \cup \mathcal{P}(V(S))$.
		
		\end{description}
		\end{quote}
	\end{theorem}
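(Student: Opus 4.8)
The plan is to prove the three claims (a), (b), (c) essentially in the order stated, each following from the inductive definition of the $V_n(S)$ together with routine induction on $n$. For (a), the two chains $V_0(S) \subseteq V_1(S) \subseteq \cdots$ and $V_0(S) \in V_1(S) \in \cdots$ come straight from $V_{n+1}(S) = V_n(S) \cup \mathcal{P}(V_n(S))$: the union with $\mathcal{P}(V_n(S))$ makes $V_n(S) \subseteq V_{n+1}(S)$ immediate, and since $V_n(S) \in \mathcal{P}(V_n(S)) \subseteq V_{n+1}(S)$ we also get $V_n(S) \in V_{n+1}(S)$. The strict cardinality inequalities follow because $\mathcal{P}(V_n(S)) \subseteq V_{n+1}(S)$ forces $\card(V_{n+1}(S)) \geq \card(\mathcal{P}(V_n(S))) = 2^{\card(V_n(S))} > \card(V_n(S))$ by Cantor's theorem; here one uses that each $V_n(S)$ is infinite, which holds since $S$ is infinite and $S \subseteq V_n(S)$.

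For (b), I would argue by induction on $n$ that $V_n(S) \subseteq S \cup \mathcal{P}(V_n(S))$. The base case $n = 0$ is the tautology $S \subseteq S \cup \mathcal{P}(S)$. For the inductive step, take $A \in V_{n+1}(S) = V_n(S) \cup \mathcal{P}(V_n(S))$. If $A \in \mathcal{P}(V_n(S))$ then $A \subseteq V_n(S) \subseteq V_{n+1}(S)$ by (a), so $A \in \mathcal{P}(V_{n+1}(S))$. If instead $A \in V_n(S)$, the inductive hypothesis gives either $A \in S$ (done) or $A \subseteq V_n(S) \subseteq V_{n+1}(S)$, so again $A \in \mathcal{P}(V_{n+1}(S))$. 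Either way $A \in S \cup \mathcal{P}(V_{n+1}(S))$, completing the induction.

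For (c), suppose $A \in V(S) = \bigcup_{n=0}^\infty V_n(S)$, so $A \in V_n(S)$ for some $n$. By (b), either $A \in S$, in which case we are done, or $A \subseteq V_n(S)$; but $V_n(S) \subseteq V(S)$, so $A \subseteq V(S)$, i.e.\ $A \in \mathcal{P}(V(S))$. Hence $V(S) \subseteq S \cup \mathcal{P}(V(S))$.

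I do not anticipate a genuine obstacle here; the only point requiring any care is making sure the "individual versus set" dichotomy in (b) and (c) is handled cleanly, since an element $A \in V_n(S)$ that happens to lie in $S$ should \emph{not} be unpacked as a subset. The phrasing "either $A \in S$ or $A \subseteq V_n(S)$" keeps these cases separate, and the convention (stated in Definition \ref{D: Superstructure}) that individuals are not regarded as sets is what legitimizes treating this as a genuine dichotomy rather than a possible overlap. Everything else is bookkeeping with the recursion.
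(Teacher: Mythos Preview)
Your argument is correct and is exactly the standard way one proves these superstructure facts: the inclusion and membership chains in (a) drop out of the recursion, the cardinality jump is Cantor, and (b) is a clean induction on $n$ which then yields (c) by passing to the union. One minor remark: you do not actually need the infiniteness of $V_n(S)$ for the strict cardinality inequality, since Cantor's theorem $2^{\kappa} > \kappa$ holds for all cardinals; the hypothesis that $S$ is infinite is only there so that the superstructure is rich enough for its intended use, not for this particular estimate.

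As for comparison with the paper: the paper simply states this theorem without proof and moves on to discuss why transitivity matters. So your write-up is not competing with an alternative argument --- it is supplying the omitted details, and it does so correctly.
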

		
	The transitivity of the superstructure is the \emph{key} property of the superstructure. It tells us that everything in the superstructure is either an individual or a set. This allows us to apply all of our set theoretic operations ($\cup$, $\cap$, $\setminus$, etc) on the non-individual objects of the superstructure.
	
	We claim that the superstructure $V(\mathr)$ contains all of the objects from analysis that we need. To give an idea of the breadth of the superstructure we demonstrate that the Cartesian product $\mathr \times \mathr$ is in the superstructure, and that by induction $\mathr^n$ is in the superstructure. From there we are able to show that functions, and the algebraic operations are members of the superstructure.

	In order to demonstrate that the Cartesian product $\mathr \times \mathr$ is in the superstructure we define the ordered pair to be $\bra a, b \ket =: \{ \{a\}, \{a, b\}\}$. If we can use this definition to prove the following lemma, then we will have justified our definition.
	
	\begin{lemma}
		Let $a, b \in \mathr$. $\bra a, b \ket = \bra a', b' \ket \Leftrightarrow a = a'$ and $b = b'$.
	\end{lemma}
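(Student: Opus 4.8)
The plan is to prove the two implications of the biconditional separately, with the forward direction carrying all of the (modest) content. The reverse direction is immediate: if $a = a'$ and $b = b'$, then $\{a\} = \{a'\}$ and $\{a,b\} = \{a',b'\}$, so $\langle a,b\rangle = \{\{a\},\{a,b\}\}$ and $\langle a',b'\rangle = \{\{a'\},\{a',b'\}\}$ are literally the same set, and the implication follows by set extensionality.

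For the forward direction, assume $\{\{a\},\{a,b\}\} = \{\{a'\},\{a',b'\}\}$ and split on whether $a = b$. If $a = b$, then $\langle a,b\rangle = \{\{a\},\{a\}\} = \{\{a\}\}$ is a singleton, so the right-hand set must also be a singleton; this forces $\{a'\} = \{a\}$ and $\{a',b'\} = \{a\}$, and unpacking these singleton equalities gives $a' = a$ and $b' = a$, hence $b' = a = b$. If instead $a \neq b$, then $\{a\} \neq \{a,b\}$, so $\langle a,b\rangle$ has exactly two distinct elements. Since $\{a\}$ is one of $\{a'\}$ and $\{a',b'\}$, I would first rule out $\{a\} = \{a',b'\}$: that would give $a' = b' = a$, collapsing the right-hand set to the singleton $\{\{a\}\}$, contradicting that it has two elements. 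Hence $\{a\} = \{a'\}$, so $a = a'$. Substituting $a' = a$, the equation becomes $\{\{a\},\{a,b\}\} = \{\{a\},\{a,b'\}\}$, so $\{a,b\}$ equals $\{a\}$ or $\{a,b'\}$; the first is impossible since $b \neq a$, so $\{a,b\} = \{a,b'\}$, and again using $b \neq a$ we conclude $b = b'$.

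The ``hard part'' here is only bookkeeping: one must resist the temptation to assume the four set-members $\{a\}$, $\{a,b\}$, $\{a'\}$, $\{a',b'\}$ are pairwise distinct, and instead handle carefully the degenerate case $a = b$, in which the ordered pair collapses to a singleton-of-a-singleton. Beyond that case split, the entire argument is a short exhaustion of possibilities resting on nothing more than extensionality for finite sets.
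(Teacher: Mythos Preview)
Your proof is correct. Both you and the paper argue by a short case analysis resting on extensionality, but the case splits are organized differently: the paper splits on the two possible \emph{matchings} of elements (either $\{a\}=\{a'\}$ and $\{a,b\}=\{a',b'\}$, or $\{a\}=\{a',b'\}$ and $\{a,b\}=\{a'\}$), whereas you split on whether the pair is \emph{degenerate} ($a=b$) or not. Your organization is slightly more explicit about the collapse to a singleton in the degenerate case, while the paper's version is a touch more compact; neither approach has any real advantage over the other, and both are standard.
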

	\begin{proof} $\bra a, b \ket = \bra a', b' \ket \Leftrightarrow \{\{a\}, \{a, b\}\} = \{\{a'\}, \{a', b'\}\}$.
		\begin{description}
		
		\item[($\Rightarrow$)]
		
			\begin{description}
			
			\item[Case 1:] $\{a\} = \{a'\}$ and $\{a, b\} = \{a', b'\}$ implies $a = a'$ implies $\{a, b\} = \{a, b'\}$ implies $b = b'$.
			\item[Case 2:] $\{a\} = \{a', b'\}$ and $\{a, b\} = \{a'\}$. The former implies $a = a' = b'$ and the latter implies $a = b = a'$ as desired.
			
			\end{description}
			
		\item[($\Leftarrow$)] Trivial. 
		
		\end{description}
	\end{proof}
		
From the above lemma we have established that ordered pairs may be represented as sets. With respect to the superstructure we have $\{a\} \in \mathcal{P}(\mathr) \subseteq V_1(\mathr)$ and $\{a, b\} \in \mathcal{P}(\mathr) \subseteq V_1(\mathr)$. So $\{\{a\}, \{a, b\}\} \in \mathcal{P}(V_1(\mathr)) \subset V_2(\mathr)$. This puts the ordered pair, $\bra a, b \ket \in V_2(\mathr) \subset V(\mathr)$ and therefore $\mathr^2 \subseteq V_2(\mathr)$, hence $\mathr^2 \in V_3(\mathr) \subset V(\mathr)$.

We may now define ordered triples as $\bra a, b, c \ket =: \bra \bra a, b \ket, c \ket$. We may inductively define $n$-tuples in this way as well. This allows us to say that $\mathr^n$ is in the superstructure for any $n \in \mathn$. As a consequence, $\mathc$ is in the superstructure since $\mathc = \mathr^2$.

As mentioned previously, functions are in the superstructure. Indeed, we may think of the function $f$ \emph{as the graph} which is a subset of $\mathr \times \mathr$ which is certainly in the superstructure. In other words, $f \subseteq \mathr^2 \subseteq V_2(\mathr)$, hence $f \in V_3(\mathr)$.

We may also think of addition, subtraction, multiplication, and division as being in the superstructure since we may define each as sets of ordered triples. The breadth of the superstructure is apparent.

We now discuss the language of the superstructure, denoted $\mathcal{L}(S)$. Do not forget that the goal of our language is to be able to formulate statements, and know exactly where to put the asterisks when we wish to transfer them to their non-standard form.

	\begin{definition}[Alphabet]\label{D: Alphabet}\index{Language!Alphabet}
		Let $S$ be an infinite set and $V(S)$ be the superstructure of $S$. The \textbf{alphabet} $\mathcal{A} \cup \mathcal{B} \cup \mathcal{C}$ of the language $\mathcal{L}(S)$ consists of the sets:
		
		\begin{quote}
		\begin{description}
		
		\item[(a)] The set $\mathcal{A}$ is composed of the \textbf{symbols}: $$= \textnormal{ } \in \textnormal{ } \neg \textnormal{ } \wedge  \textnormal{ }( \textnormal{ } ) \textnormal{ }[ \textnormal{ } ] \textnormal{ } \{ \textnormal{ } \} \textnormal{ } \bra \textnormal{ } \ket \textnormal{ } \exists \textnormal{ } \upharpoonright,$$
		\item[(b)] The set $\mathcal{B}$ is composed of countably many \textbf{variables}: $$x, y, z, x_1, x_2, x_3, ..., X, Y, Z, X_1, X_2, X_3, ...$$
		\item[(c)] The set $\mathcal{C} = V(S)$ is the superstructure discussed previously. The members of $\mathcal{C}$ are called \textbf{constants} of the language $\mathcal{L}(S)$.
		
		\end{description}
		\end{quote}
		
		Members of the alphabet $\mathcal{A} \cup \mathcal{B} \cup \mathcal{C}$ are \textbf{letters}.
	\end{definition}
	
	\begin{definition}[Vocabulary]\label{D: Vocabulary}\index{Language!Vocabulary}
		The vocabulary of $\mathcal{L}(S)$ consists of \textbf{words, terms, predicates} and \textbf{propositions} defined recursively as follows:
		
		\begin{quote}
		\begin{description}
		
		\item[(a)] A \textbf{word} is any finite sequence of letters.\index{Language!Word}
		\item[(b)] A word $\tau$ is a \textbf{term} if there exists a finite sequence of words $\tau_1, \tau_2, ..., \tau_m$ such that $\tau = \tau_m$ and for each $n \in \{1, 2, ..., m\}$ we have:\index{Language!Term}
		\begin{quote}
		\begin{description}
		
		\item[(1)] $\tau_n$ is either a variable \textbf{or}
		\item[(2)] $\tau_n$ is a constant \textbf{or}
		\item[(3)] $\tau_n = \bra \tau_i, \tau_j \ket$ for some $i, j < n$ (i.e. an ordered pair of variables and/or constants) \textbf{or}
		\item[(4)] $\tau_n = (\tau_i \upharpoonright \tau_j)$ for some $i, j < n$.
		
		\end{description}
		\end{quote}
		
		A term containing no variables is a \textbf{closed term}. The set of closed terms in $\mathcal{L}(S)$ is denoted by $\mathcal{T}(S)$.\index{Language!Closed Term}
		
		\end{description}
		
		We should take the time to define the symbol $\upharpoonright$. It is essentially function evaluation. For $A \in V(S) \setminus S$ and $a \in V(S)$ we define
		\begin{align}
			A \upharpoonright a =	
				\begin{cases}
					b &\text{if ($\exists ! b \in V(S)$)[$\bra a, b \ket \in A$]}\\ 
					\varnothing &\text{otherwise.}\notag
				\end{cases}
		\end{align}
		
	\begin{remark}\label{R: Evaluation}
		Let $f \in V(S) \setminus S$ be a function. Let $a \in V(S)$ and $x \in \mathcal{B}$. We shall write $f(a)$ and $f(x)$ instead of $f \upharpoonright a$ and $f \upharpoonright x$, respectively.
	\end{remark}
		
		\begin{description}
		
		\item[(c)] A word $P$ is a \textbf{predicate} if there exists a finite sequence of words $P_1, P_2, ..., P_m$ such that $P = P_m$ and for each $n \in \{1, 2, ..., m\}$ we have:\index{Language!Predicate}
		\begin{quote}
		\begin{description}
		
		\item[(1)] $P_n$ is either $( \sigma = \tau )$ for some terms $\sigma$ and $\tau$, \textbf{or}
		\item[(2)] $P_n$ is $(\sigma \in \tau)$ for some terms $\sigma$ and $\tau$, \textbf{or}
		\item[(3)] $P_n = \neg P_i$ for some $i < n$, \textbf{or}
		\item[(4)] $P_n = (P_i \wedge P_j)$ for some $i, j < n$, \textbf{or}
		\item[(5)] $P_n = (\exists x \in A)P_i$ for some $i < n$ and for some term $A$ where the variable $x$ does not occur.
		
		\end{description}
		\end{quote}
				
		\item[(d)] A variable $x$ in the predicate $P$ is \textbf{bounded} if we have the form $$(\exists x \in A)R(x) \textnormal{  \textbf{or}  } (\forall x \in A)R(x),$$ for some predicate $R(x)$ and for some term $A$ in which $x$ does not occur. If $x_1, x_2, ..., x_n$ are all free variables in $P$, then we write $P(x_1, x_2, ..., x_n)$ instead of $P$.\index{Language!Bounded Variable}
		\item[(f)] A \textbf{proposition} is a predicate in which no variable is free. The set of all propositions in $\mathcal{L}(S)$ is denoted by $\Pi(S)$.\index{Language!Proposition}
		
		\end{description}
		\end{quote}
		
	\end{definition}

	\begin{remark}
		Predicates such as $(\forall x)P(x)$ or $(\exists x)(P(x)$ are disallowed. That is, we do not consider unbounded predicates.
	\end{remark}

	\begin{example}[Words]
		Let $x, y, w, z \in \mathcal{B}$. We consider $(x, y$ to be a word since it is just a finite sequence of letters. Note, a word does not have to \emph{make sense}, in fact, it can be wholly meaningless, as in our example.
		
		We also consider $\bra w, z \ket$ to be a word, by the definition.
	\end{example}
	
	\begin{example}[Terms]
		Let $x, w, z \in \mathcal{B}$. From the last example, $\bra w, z \ket$ is a word, but it is also a term. So we see that there is a bit of structure required to give us a term. Of course, a term does not necessarily have to have meaning.
		
		Using our notation from Remark \ref{R: Evaluation} we see that $(\sin \upharpoonright x) = \sin x$ is a term. Similarly, $\sin \pi$ is a term. Moreover, it is a closed term since it contains only constants.
	\end{example}
	
	\begin{example}[Predicates]
		Let $x, y, z \in \mathcal{B}$ be variables and $A, B, \mathr \in \mathcal{C}$ be constants. We consider $(\forall x \in \mathr)(\exists y \in \mathr)[ (A \upharpoonright x) = y]$ to be a predicate.
		
		We may look at something a bit more familiar, like the statement that a set is dense. $P(B) = (\forall x, y \in B)(\exists z \in B)[x < z < y]$. This another example of a predicate. In this case, $B \in V(\mathr)$ is a free variable.
	\end{example}
	
	\begin{example}[Propositions]
		Considering the density predicate in the above example, we see that letting $B = \mathn$, $B = \mathbb{Q}$, or $B = \mathr$ give propositions that are either true or false.
	\end{example}
	
	With our language in mind, we are prepared to state the Transfer Principle.
	
	\begin{theorem}[Transfer Principle]\index{Transfer Principle}
		Let $P$ be a proposition in the form $P = P(A_1, A_2, ..., A_r)$ for some predicate $P(x_1, x_2, ..., x_r)$ in the language $\mathcal{L}(\mathr)$, some $r \in \mathn$, and some $A_n \in V(\mathr)$. Let ${\star P}$ be the proposition in $\mathcal{L}(\starr)$ obtained by replacing all $A$'s by ${\star A}$'s. That is, ${\star P =: P({\star A_1}, {\star A_2}, ..., {\star A_n})}.$ Then $P$ and ${\star P}$ are equivalent.
	\end{theorem}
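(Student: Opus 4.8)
The plan is to prove the Transfer Principle by induction on the recursive construction of terms and predicates in $\mathcal{L}(\mathr)$, the crucial ingredient being a {\L}o\'{s}-type lemma that converts truth in the standard superstructure $V(\mathr)$ into an ``a.e.'' statement about the representing nets, and then back into truth in $V(\starr)$. Before the induction can begin one must extend the $\star$-map beyond the sets and functions handled so far to all of $V(\mathr)$: set $\star r = r$ for individuals $r\in\mathr$, and for a set $A\in V(\mathr)\setminus\mathr$ put $\star A = \{\bra\aep\ket : \aep\in A\textnormal{ a.e.}\}$, in harmony with Definitions~\ref{D: Extension of Set} and~\ref{D: Internal}. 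A preliminary lemma records that this is well defined (independent of representatives), that $\star A$ is an internal element of $V(\starr)$ lying at the same superstructure level as $A$, that $\star$ commutes with the Boolean operations (Theorem~\ref{T: Boolean}), and that $\star$ commutes with function evaluation $\upharpoonright$ --- the last established directly from the construction, using the Regular Representatives lemma (Lemma~\ref{L: Regular Representatives}) and Definition~\ref{D: NSE of Function} in the function case. With this in hand I would first dispatch \emph{closed terms}: by induction on the length $m$ of a defining sequence $\tau_1,\dots,\tau_m$, every closed term $\tau$ has a value $\bar\tau\in V(\mathr)$ and $\overline{\star\tau}=\star(\bar\tau)$; the constant case is immediate, the ordered-pair case $\tau=\bra\tau_i,\tau_j\ket$ reduces to $\star\bra a,b\ket=\bra\star a,\star b\ket$ (a short computation from $\bra a,b\ket=\{\{a\},\{a,b\}\}$), and the evaluation case $\tau=(\tau_i\upharpoonright\tau_j)$ invokes the $\upharpoonright$-commutation just mentioned.

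Next comes the core induction on \emph{predicates}. For the atomic cases, $\bra\aep\ket=\bra\bep\ket$ in $\starr$ iff $\aep=\bep$ a.e.\ (Definition~\ref{D: Equivalence Relation}), and $\bra\aep\ket\in\bra\Aep\ket$ iff $\aep\in\Aep$ a.e.\ (Definition~\ref{D: Internal}); hence an atomic predicate holds after substitution of (internal) objects iff the set of $\varep$ on which the corresponding ``$\varep$-instance'' holds lies in $\scu$, and for a closed atomic proposition this set is all of $\rplus$. Negation transfers because $\scu$ is an ultrafilter --- a set is in $\scu$ iff its complement is not (Theorem~\ref{T: Characterization of Ultrafilter}); conjunction transfers because $\scu$ is closed under finite intersection and upward inclusion (Definition~\ref{D: Filter and Free Filter}(b),(c)). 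The bounded existential step $(\exists x\in A)P(x)$ is the heart of the argument: given a net $(\xep)$ with $\xep\in\Aep$ witnessing the $\varep$-instances a.e., its class $\bra\xep\ket\in\star A$ witnesses $\star P$ upstairs (the easy direction); conversely, a witness $\xi=\bra\xep\ket\in\star A$ with $\star P$ true at $\xi$ has, by Lemma~\ref{L: Regular Representatives}, a representative with $\xep\in\Aep$ for all $\varep$, the inductive hypothesis then forces the $\varep$-instance of $(\exists x\in\Aep)P_\varep(x)$ to hold a.e., and the Axiom of Choice produces the standard witness $\varep$-by-$\varep$.

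The main obstacle, in my view, is not any single computation but the bookkeeping around \emph{free variables and substitution}: the theorem concerns a proposition $P(A_1,\dots,A_r)$ obtained from a predicate by instantiating its free variables with constants, so the induction must be carried out for predicates carrying free variables, tracking how the $\varep$-instances depend on the instantiating objects, and one must insist throughout that those objects are \emph{internal}, i.e.\ elements of some $\star A$. This is precisely why the language of Section~\ref{S: Logic for Transfer} admits only bounded quantifiers --- the Warning preceding that section shows that quantifying over all of $\starr$ with no internal bounding set breaks transfer, since $\scf(\starr)$ is bounded yet has no supremum. The technical engine making the bounded case go through is exactly the combination of the Regular Representatives lemma (every element of $\star A$ is represented by a net landing in the generating sets $A_\varep$) with the Axiom of Choice (to select witnessing indices); this is the classical {\L}o\'{s} theorem, whose full proof we cite from Lindstr\o m~\cite{tLin} (see also Stroyan--Luxemburg~\cite{StroLux76}) rather than reproduce here.
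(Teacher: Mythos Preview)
Your proposal is a correct and well-organized sketch of the classical {\L}o\'{s}-theorem route to Transfer: extend the $\star$-map to all of $V(\mathr)$, verify it commutes with the term-forming operations, and then induct on predicate complexity using the ultrafilter properties for $\neg$ and $\wedge$ and the Axiom of Choice plus regular representatives for the bounded existential step. You have also correctly identified the chief bookkeeping hazard --- tracking free variables and insisting that instantiating objects be internal --- and the reason bounded quantifiers are essential.

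The paper, however, does not prove this theorem at all: its ``proof'' consists entirely of a reference to Davis~\cite{mDavis}, pp.~24--33, and Lindstr\o m~\cite{tLin}, pp.~73--81. So there is nothing to compare at the level of argument; what you have written is essentially an outline of what those references contain, and in that sense your approach \emph{is} the paper's approach, just spelled out. If anything, your final sentence --- deferring the ``full proof'' to Lindstr\o m --- already concedes the same point the paper makes, only after having done considerably more work than the paper does.
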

	\begin{proof} We refer to M. Davis \cite{mDavis} pp. 24-33 or T. Linstr\o m \cite{tLin} pp. 73-81.
	\end{proof}

\chapter{The Usual Topology of $\mathr$ and Monads}\label{C: Topology}
	We apply our conception of infinitely close to discuss open, closed, bounded, and compact sets. Each shall be characterized with respect to an infinitesimal ball about a point, which we call a monad. We think of monads as a kind of universal infinitesimal interval.

\section{Monads}\label{S: Monads}

	\begin{definition}\index{Monad}
		Let $s \in S$ and $S \subseteq \mathr$.
		\begin{quote}
			\begin{description}
		
				\item[(a)] The set $\mu(s) =: \{s + dx : dx \in \sci(\starr)\}$ is the \textbf{monad at $s$}. 
				\item[(b)] The set $\mu(S) =: \{s + dx : s \in S$, $dx \in \sci(\starr)\}$ is the \textbf{monad on $S$}.\index{Monad!Monad on a Set}
				\item[(c)] The set $\mu_0(s) =: \mu(s) \setminus \{s\}$ is the \textbf{deleted monad at $s$}.\index{Monad!Deleted Monad}
			
			\end{description}
		\end{quote}
	\end{definition}
	Henceforth we shall use the following notation:
	\begin{itemize}
	
		\item The standard $\frac{1}{n}$-ball centered at $s \in \mathr$ is $\nball =: \{ x \in \mathr : | x - s | < \frac{1}{n} \}$.
		\item The standard deleted $\frac{1}{n}$-ball centered at $s$ is $B_0(s, \frac{1}{n}) =: \{ x \in \mathr : 0 < | x - s | < \frac{1}{n} \}$.
		\item The non-standard $\frac{1}{n}$-ball centered at $s$ is $\starball =: \{ x \in \starr : | x - s | < \frac{1}{n} \}$.
		\item The non-standard deleted $\frac{1}{n}$-ball centered at $s$ is ${\star B}_0(s, \frac{1}{n}) =: \{ x \in \starr : 0 < | x - s | < \frac{1}{n} \}$.
		
	\end{itemize}
		
	\begin{lemma}[Characterization of Monad]\label{L: CM}
		Let $s \in \mathr$. Then, $\mu(s) = \cap_{n \in \mathn} \starball$. Similarly, $\mu_0(s) = \cap_{n \in \mathn} {\star B}_0 (s, \frac{1}{n})$.
	\end{lemma}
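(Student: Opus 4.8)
The plan is to prove both equalities by a direct chase through the definitions, unwinding membership in the monad into the defining inequalities of an infinitesimal; there is no real machinery needed beyond part \textbf{(b)} of Definition \ref{D: Trichotomy} and the definitions of $\mu(s)$, $\mu_0(s)$, $\starball$, and ${\star B}_0(s, \frac{1}{n})$.

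First I would handle $\mu(s)$. Fix an arbitrary $x \in \starr$. By the definition of the monad, $x \in \mu(s)$ if and only if $x = s + dx$ for some $dx \in \sci(\starr)$, which (taking $dx = x - s$) is equivalent to $x - s \in \sci(\starr)$. By part \textbf{(b)} of Definition \ref{D: Trichotomy}, $x - s \in \sci(\starr)$ if and only if $|x - s| < \frac{1}{n}$ for every $n \in \mathn$. But $|x - s| < \frac{1}{n}$ is exactly the condition defining membership in $\starball$, so this last statement says precisely that $x \in \starball$ for all $n \in \mathn$, i.e. $x \in \bigcap_{n \in \mathn} \starball$. Chaining these equivalences yields $\mu(s) = \bigcap_{n \in \mathn} \starball$.

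Next, for the deleted monad, recall $\mu_0(s) = \mu(s) \setminus \{s\}$, so $x \in \mu_0(s)$ if and only if $x \in \mu(s)$ and $x \neq s$. By the first part, this is equivalent to: $|x - s| < \frac{1}{n}$ for all $n \in \mathn$ together with $x - s \neq 0$; combining the two conditions gives $0 < |x - s| < \frac{1}{n}$ for all $n \in \mathn$, which is exactly $x \in {\star B}_0(s, \frac{1}{n})$ for every $n$. Hence $\mu_0(s) = \bigcap_{n \in \mathn} {\star B}_0(s, \frac{1}{n})$.

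I do not anticipate a genuine obstacle. The only point deserving a little care is to keep every intersection taken inside $\starr$, since $\mu(s) \subseteq \starr$ and each $\starball \subseteq \starr$, so that the asserted equalities are between subsets of $\starr$ rather than of $\mathr$. It may also be worth remarking that the lemma makes precise the informal picture of the monad as a ``universally small'' ball: $\mu(s)$ is literally the intersection of all the standard-radius non-standard balls centered at $s$.
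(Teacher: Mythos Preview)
Your proof is correct and follows essentially the same chain of equivalences as the paper's own argument: $x\in\mu(s)\iff x-s\in\sci(\starr)\iff |x-s|<\tfrac{1}{n}$ for all $n\in\mathn\iff x\in\bigcap_{n\in\mathn}\starball$. Your treatment is slightly more detailed in that you spell out the deleted-monad case explicitly, whereas the paper dispatches it with ``Similarly''.
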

	\begin{proof}
		Let $s \in \mathr$. We have $x \in \mu(s)$ \iff there exists $dx \in \sci(\starr)$ such that $x =s+ dx$ \iff $x -s\approx 0$ \iff $| x - s | < \frac{1}{n}$ for all $n \in \mathn$ \iff $x \in \cap_{n \in \mathn} \starball$.
	\end{proof}
		
	\begin{lemma}[Balloon Principle]\label{L: BP}
		With $s \in \mathr$ we have $\star B(s, \frac{1}{\nu}) \subset \mu(s) \subset \star B(s, \frac{1}{n})$ for all $n \in \mathn$ and for all $\nu \in \starn \setminus \mathn$.
	\end{lemma}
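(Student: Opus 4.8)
The plan is to prove the two inclusions separately; each one reduces to the characterization of the monad in Lemma \ref{L: CM} together with the fact, recorded after Example \ref{E: Canonical Infinitely Large}, that every element of $\starn \setminus \mathn$ is infinitely large.

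First I would dispatch the right-hand inclusion $\mu(s) \subset \star B(s, \frac{1}{n})$. By Lemma \ref{L: CM}, $\mu(s) = \bigcap_{k \in \mathn} \star B(s, \frac{1}{k})$, so $\mu(s) \subseteq \star B(s, \frac{1}{n})$ for every $n \in \mathn$ is immediate. For strictness, note that $s + \frac{1}{2n}$ lies in $\star B(s, \frac{1}{n})$ because $\frac{1}{2n} < \frac{1}{n}$, whereas $\frac{1}{2n}$ is a positive standard real and hence not infinitesimal, so $s + \frac{1}{2n} \notin \mu(s)$.

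For the left-hand inclusion $\star B(s, \frac{1}{\nu}) \subset \mu(s)$, fix $\nu \in \starn \setminus \mathn$. Then $\nu$ is infinitely large (this is exactly where the structure of $\starn$ enters; if instead $\nu \leq m$ for some $m \in \mathn$, one partitions a set in $\scu$ as $\bigcup_{k=0}^{m}\{\varep : \nu_\varep = k\}$ and applies Corollary \ref{C: Extension of Characterization} to force $\nu = k \in \mathn$, a contradiction). By Lemma \ref{L: Reciprocal}, $\frac{1}{\nu} \in \sci(\starr)$, so $0 < \frac{1}{\nu} < \frac{1}{n}$ for all $n \in \mathn$. Hence any $x \in \star B(s, \frac{1}{\nu})$ satisfies $|x - s| < \frac{1}{\nu} < \frac{1}{n}$ for all $n \in \mathn$, so $x - s \in \sci(\starr)$ and $x = s + (x - s) \in \mu(s)$. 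Strictness is witnessed by $s + \frac{1}{\nu}$, which lies in $\mu(s)$ since $\frac{1}{\nu}$ is infinitesimal, but not in $\star B(s, \frac{1}{\nu})$ since $|(s + \frac{1}{\nu}) - s| = \frac{1}{\nu} \not< \frac{1}{\nu}$.

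I do not expect a real obstacle: the whole argument is an unwinding of the definitions of $\mu(s)$, $\sci(\starr)$, and the non-standard balls, with Lemma \ref{L: CM} and Lemma \ref{L: Reciprocal} doing the work. The only step that genuinely uses prior theory is the observation that elements of $\starn \setminus \mathn$ are infinitely large, and that is already available.
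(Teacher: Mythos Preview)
Your proposal is correct and follows essentially the same route as the paper: the right inclusion comes directly from Lemma~\ref{L: CM}, and the left inclusion from the fact that $\nu \in \starn \setminus \mathn$ is infinitely large so $\frac{1}{\nu}$ is infinitesimal, forcing $|x - s| < \frac{1}{\nu}$ to imply $x \approx s$. The only difference is that you also supply explicit witnesses ($s + \frac{1}{2n}$ and $s + \frac{1}{\nu}$) for the strictness of the inclusions, whereas the paper's proof omits this and establishes only the containments.
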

	\begin{proof}
		From Lemma \ref{L: CM} we have that $\mu(s) \subset \star B(s, \frac{1}{n})$. For the other inclusion observe that $x \in \star B(s, \frac{1}{\nu})$ implies $x \approx s$, which implies $x \in \mu(s)$.
	\end{proof}
		
	The reader may find it interesting that in the statement of the Balloon Principle, both $\nball$ and $B(s, \frac{1}{\nu})$ are internal sets whereas $\mu(s)$ is external. Indeed, an allusion to this is the fact that $\mu(0) = \sci(\starr)$, making $\mu(0)$ external since $\sci(\starr)$ is.
		
\section{Interior Points and Open Sets}\label{S: Interior and Open Sets}
	For $S \subseteq \mathr$ and $s \in \mathr$ recall the following standard definitions:
	\begin{itemize}
	
		\item The point $s \in \mathr$ is an \emph{interior point} of $S$ if there exists $n \in \mathn$ such that $\nball \subseteq S$. We denote the set of interior points of S by $int(S)$.\index{Interior Point}
		\item The set $S$ is \emph{open} \iff $S = int(S)$.\index{Open Set}
	
	\end{itemize}
	
	\begin{theorem}[Characterization of Interior Points and Open Sets]\label{T: IP}
		Let $s \in S$ and $S \subseteq \mathr$. Then $s \in int(S)$ \iff $\mu(s) \subseteq \stars$. Consequently, the following are equivalent:
		\begin{quote}
			\begin{description}
			
				\item[(a)] $S$ is open.
				\item[(b)] $(\forall s \in S)[\mu(s) \subseteq \stars]$.
				\item[(c)] $\mu(S) \subseteq \stars$.
			
			\end{description}
		\end{quote}
	\end{theorem}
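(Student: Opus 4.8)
The plan is to prove the pointwise characterization $s \in int(S) \iff \mu(s) \subseteq \stars$ first, and then read off the equivalence of \textbf{(a)}, \textbf{(b)}, \textbf{(c)} from it. The two workhorses will be Lemma \ref{L: CM}, which expresses $\mu(s)$ as $\bigcap_{n \in \mathn} \starball$, and Theorem \ref{T: Boolean}, part \textbf{(e)}, which transports set inclusions between $\mathr$ and $\starr$; I will also use the routine identity $\starball = {^*(\nball)}$, which is immediate from Definition \ref{D: Extension of Set} by unwinding what it means for a class $\bra x_\varep \ket$ to satisfy $|x_\varep - s| < \tfrac1n$ a.e.

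For the forward implication, assume $s \in int(S)$, so that $\nball \subseteq S$ for some fixed $n \in \mathn$. Then Theorem \ref{T: Boolean}, part \textbf{(e)}, gives $\starball \subseteq \stars$, while Lemma \ref{L: CM} gives $\mu(s) \subseteq \starball$; composing these inclusions yields $\mu(s) \subseteq \stars$. (Alternatively the Balloon Principle, Lemma \ref{L: BP}, supplies $\mu(s) \subseteq \starball$ directly.)

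For the reverse implication I would argue by contraposition. Suppose $s \notin int(S)$, so that for every $n \in \mathn$ the ball $\nball$ fails to be contained in $S$, and choose $x_n \in \nball \setminus S$. Glue these points into a net exactly as in Example \ref{E: Canonical Infinitely Large}: put $y_\varep = x_n$ when $\varep \in I_n \setminus I_{n+1}$ (and $y_\varep = x_1$ for $\varep \geq 1$), where $I_n = (0,\tfrac1n)$. For each $m \in \mathn$ one checks $I_m \subseteq \{\varep \in \rplus : |y_\varep - s| < \tfrac1m\}$, and since $I_m \in \scu$, part \textbf{(c)} of Definition \ref{D: Filter and Free Filter} gives $|y_\varep - s| < \tfrac1m$ a.e.; as $m$ was arbitrary, $\bra y_\varep \ket - s \in \sci(\starr)$, so $\bra y_\varep \ket \in \mu(s)$. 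On the other hand $y_\varep \notin S$ for every $\varep \in \rplus$, so $\{\varep \in \rplus : y_\varep \in S\} = \varnothing \notin \scu$; the same holds for any other representative of $\bra y_\varep \ket$, because two representatives agree a.e. and $\scu$ is a filter, so $\bra y_\varep \ket \notin \stars$. This contradicts $\mu(s) \subseteq \stars$, completing the pointwise characterization. The main obstacle is precisely this direction, since it forces us to manufacture a genuine non-standard point of the monad outside $\stars$; the one delicate point in it is the representative-independence of $\bra y_\varep \ket \notin \stars$, which is the one-line consequence of $\scu$ being a filter just noted.

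Finally, the three-way equivalence follows formally. Implication \textbf{(a)} $\Leftrightarrow$ \textbf{(b)} is immediate from the pointwise characterization together with the trivial inclusion $int(S) \subseteq S$, so that $S = int(S)$ holds exactly when every point of $S$ is an interior point. And \textbf{(b)} $\Leftrightarrow$ \textbf{(c)} follows once one observes that $\mu(S) = \bigcup_{s \in S} \mu(s)$ straight from the definition of the monad on a set, so $\mu(S) \subseteq \stars$ iff $\mu(s) \subseteq \stars$ for every $s \in S$.
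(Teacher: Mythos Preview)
Your proof is correct. The forward direction and the derivation of \textbf{(a)} $\Leftrightarrow$ \textbf{(b)} $\Leftrightarrow$ \textbf{(c)} match the paper's argument essentially verbatim.

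The genuine difference is in the reverse implication of the pointwise characterization. The paper argues directly: from $\mu(s) \subseteq \stars$ and the Balloon Principle (Lemma \ref{L: BP}) one gets ${^*B(s,\tfrac{1}{\nu})} \subseteq \stars$ for every $\nu \in \starn \setminus \mathn$; this makes the statement $(\exists \nu \in \starn)[{^*B(s,\tfrac{1}{\nu})} \subseteq \stars]$ true, and the Transfer Principle then pulls it back to $(\exists n \in \mathn)[B(s,\tfrac{1}{n}) \subseteq S]$. Your route is contrapositive and entirely hands-on: from the failure of $s \in int(S)$ you build an explicit net $(y_\varep)$, patterned on Example \ref{E: Canonical Infinitely Large}, whose class lands in $\mu(s) \setminus \stars$. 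The paper's argument is shorter and illustrates the Transfer-based style that the later chapters favor; your argument is more elementary, needing nothing beyond the ultrafilter axioms and the definition of $\stars$, and it makes the witness to $\mu(s) \not\subseteq \stars$ completely explicit. Both are perfectly sound.
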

	\begin{proof}
		We begin by proving the statement for interior points of $S$.
		\begin{description}
		
			\item[($\Rightarrow$)] Let $s$ be an interior point of $S$. Then there exists $n \in \mathn$ such that $\nball \subseteq S$ which implies there exists $n \in \mathn$ such that $\starball \subseteq \stars$ (by \textbf{(e)} of Theorem \ref{T: Boolean}). Therefore, $\mu(s) \subseteq \stars$ since $\mu(s) \subset \starball$ by Lemma \ref{L: BP}.
			\item[($\Leftarrow$)] Suppose $\mu(s) \subseteq \stars$. Then ${^*B(s, \frac{1}{\nu})} \subset \stars$ for all $\nu \in \starn \setminus \mathn$ by Lemma \ref{L: BP}. Trivially, there exists $\nu_0 \in \starn$ such that ${^*B(s, \frac{1}{\nu_0})} \subset \stars$. By the Transfer Principle there exists $\nu \in \mathn$ such that $B(s, \frac{1}{\nu}) \subset S$, which is to say that $s \in int(S)$.
		
		\end{description}
		
		The equivalence of \textbf{(a)} and \textbf{(b)} is given by noting $S$ open \iff $S = int(S)$ and using the statement just proved. So, $s \in S$ \iff $s \in int(S)$ \iff $\mu(s) \subseteq \stars$. As $s$ was arbitrary, we have that for each $s \in S$, $\mu(s) \subseteq \stars$.
		
		The equivalence of \textbf{(b)} and \textbf{(c)} is clear by the definition of $\mu(S)$.
	\end{proof}
		
	\begin{remark}[Reduction of Quantifiers]\index{Reduction of Quantifiers}
		Notice that the standard formulation of open set requires two quantifiers: $(\forall s \in S)(\exists n \in \mathn)[\nball \subseteq S]$. On the other hand, the non-standard formulation requires \emph{none}: $\mu(S) \subseteq \stars$.
	\end{remark}

\section{Cluster Points, Adherent Points, and Closed Sets}\label{S: Cluster and Closed}
	For $S \subseteq \mathr$ and $s \in \mathr$ recall the following standard definitions:
	\begin{itemize}
	
		\item The point $s$ is a \emph{cluster point of $S$} if for all $\varep \in \rplus$ the set $\Sep = B_0(s, \varep) \cap S$ is non-empty. The set of all cluster points is denoted $S'$.\index{Cluster Point}
		\item The point $s$ is \emph{adherent to $S$} if $\nball \cap S \not= \varnothing$ for all $n \in \mathn$. The set of all points adherent to $S$ is called the \emph{closure} and is denoted $\overline{S}$.\index{Adherent Point}
		\item The set $S$ is \emph{closed} if $S = \overline{S}$.\index{Closed Set}
	
	\end{itemize}
	
	The following characterization will become useful for us when we discuss the non-standard characterization of limits.

	\begin{theorem}[Characterization of Cluster Points]\label{T: AP}
		Let $s \in \mathr$ and $S \subseteq \mathr$. Then the following statements are equivalent:
		\begin{quote}
			\begin{description}
			
				\item[(a)] $s$ is a cluster point of $S$.
				\item[(b)] $s + dx \in \stars$ for some non-zero infinitesimal $dx$.
				\item[(c)] There exists $x \in \stars$ such that $x \approx s$ and $x \not= s$.
				\item[(d)] $\stars \cap \mu_0(s) \not= \varnothing$.
				
			\end{description}
		\end{quote}
		Consequently, $S' = \{ s \in \mathr : \stars \cap \mu_0(s) \not= \varnothing \}$.
	\end{theorem}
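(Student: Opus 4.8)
The plan is to prove the equivalence by treating (b), (c), (d) as near-tautological reformulations of one another and concentrating the real work on the equivalence with (a). First I would note that for any $x \in \starr$, writing $dx := x - s$, the condition $x \approx s$ is exactly $dx \in \sci(\starr)$ and the condition $x \neq s$ is exactly $dx \neq 0$; this gives (b) $\Leftrightarrow$ (c) immediately. Since $\mu_0(s) = \{ s + dx : dx \in \sci(\starr)\} \setminus \{s\}$, a point of $\stars \cap \mu_0(s)$ is precisely a point of $\stars$ of the form $s + dx$ with $dx$ a nonzero infinitesimal, so (c) $\Leftrightarrow$ (d). Thus everything reduces to (a) $\Leftrightarrow$ (b).

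For (a) $\Rightarrow$ (b): if $s$ is a cluster point then for every $\varep \in \rplus$ the set $B_0(s,\varep) \cap S$ is non-empty, so by the Axiom of Choice I would pick $x_\varep \in B_0(s,\varep) \cap S$ for each $\varep$. Then $x_\varep \in S$ for \emph{all} $\varep \in \rplus$, so $\bra x_\varep \ket \in \stars$ directly by Definition \ref{D: Extension of Set}. Put $dx := \bra x_\varep \ket - s = \bra x_\varep - s \ket$. Since $0 < |x_\varep - s| < \varep$ for every $\varep$, the net $(x_\varep - s)$ is nowhere zero, so $dx \neq 0$; and for each $n \in \mathn$ we have $I_n = \{\varep : 0 < \varep < \frac{1}{n}\} \subseteq \{ \varep : |x_\varep - s| < \frac{1}{n}\}$, so $I_n \in \scu$ forces $|dx| < \frac{1}{n}$ in $\starr$ for every $n$, i.e. $dx \in \sci(\starr)$. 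Hence $s + dx = \bra x_\varep \ket \in \stars$ with $dx$ a nonzero infinitesimal, which is (b).

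For (b) $\Rightarrow$ (a): suppose $s + dx \in \stars$ with $dx$ a nonzero infinitesimal, and write $s + dx = \bra \sep \ket$, so $A := \{\varep : \sep \in S\} \in \scu$ by Definition \ref{D: Extension of Set} while $dx = \bra \sep - s \ket$. Fix an arbitrary $\varep_0 \in \rplus$ and choose $n$ with $\frac{1}{n} < \varep_0$; since $dx$ is infinitesimal and nonzero, $B := \{ \varep : |\sep - s| < \varep_0\} \in \scu$ and $C := \{\varep : \sep \neq s\} \in \scu$. Then $A \cap B \cap C \in \scu$ by part \textbf{(b)} of Definition \ref{D: Filter and Free Filter}, hence is non-empty, so there is $\varep_1$ with $s_{\varep_1} \in S$ and $0 < |s_{\varep_1} - s| < \varep_0$, i.e. $B_0(s, \varep_0) \cap S \neq \varnothing$. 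As $\varep_0$ was arbitrary, $s$ is a cluster point of $S$. The displayed consequence $S' = \{ s \in \mathr : \stars \cap \mu_0(s) \neq \varnothing\}$ is then immediate from the definition of $S'$ together with (a) $\Leftrightarrow$ (d).

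The only step needing genuine care is (a) $\Rightarrow$ (b): the argument hinges on the fortunate fact that the index set is $\rplus$, which lets the radius of the deleted ball equal the index $\varep$, so the chosen points automatically lie in $S$ for \emph{every} $\varep$ and the displacement $\bra x_\varep - s \ket$ is squeezed below each standard $\frac{1}{n}$ by $I_n \in \scu$; this is also the only place the Axiom of Choice is invoked. The remaining directions and equivalences are routine bookkeeping with the ultrafilter's finite-intersection property and with the definition of the non-standard extension.
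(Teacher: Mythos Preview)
Your proof is correct and follows essentially the same route as the paper: the easy equivalences (b)$\Leftrightarrow$(c)$\Leftrightarrow$(d) are dispatched by unwinding definitions, while (a)$\Rightarrow$(b) uses the Axiom of Choice to pick $x_\varep \in B_0(s,\varep)\cap S$ indexed by the same $\varep\in\rplus$ that drives the ultrapower, and (b)$\Rightarrow$(a) extracts a standard witness from a nonempty set in $\scu$. Your write-up is in fact slightly more explicit than the paper's in two places---the use of $I_n\in\scu$ to force $|dx|<\frac{1}{n}$, and the clean $A\cap B\cap C\in\scu$ argument in the reverse direction---but the underlying ideas are identical.
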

	\begin{proof}
		We first prove the the equivalence of \textbf{(a)} and \textbf{(b)}.
		\begin{description}
	
			\item[($\Rightarrow$)] Let $s$ be a cluster point of $S$. Then for all $\varep \in \rplus$ the set $\Sep = B_0(s, \varep) \cap S$ is non-empty, and by the Axiom of Choice there exists a net $(\xep) \in S_{\varep}^{\rplus}$ such that $\xep \in \Sep$ for all $\varep \in \rplus$. Equivalently, $\xep \in B_0(s, \varep) \cap S$ for all $\varep \in \rplus$ so we have $\{ \varep \in \rplus : \xep \in B_0(s, \varep) \cap S\} = \rplus \in \scu$. Note, $\{ \varep \in \rplus : \xep \in B_0(s, \varep) \cap S\} = \{ \varep \in \rplus : \xep \in S$ and $0 < | \xep - s | < \varep\}$ so we conclude that $\bra \xep \ket \in \stars$, and by letting $dx =: \bra \xep -s\ket = \bra \xep \ket - s$, we conclude that $dx$ is a non-zero infinitesimal.
			\item[($\Leftarrow$)] Let $s + dx \in \stars$ for some non-zero infinitesimal $dx$. Looking at representatives, $dx = \bra \delta_\varep \ket$ for some $(\delta_\varep) \in \mathr^{\rplus}$. For each $n \in \mathn$ define $S_n = \{ \varep \in \rplus : 0 < | \delta_\varep | < \frac{1}{n}$ and $s + \delta_\varep \in S$ \}. Then $S_n \in \scu$ by our assumptions and is therefore non-empty for all $n \in \mathn$. If we let $\xep =:s+ \delta_\varep$, then $\xep \in S$ and $0 < | \xep -s| < \frac{1}{n}$ for all $n \in \mathn$, which means that $s$ is a cluster point of $S$.
	
		\end{description}
	
	The equivalence of \textbf{(b)} and \textbf{(c)} is immediate by taking $x =: s+ dx$, and the equivalence of \textbf{(c)} and \textbf{(d)} is clear from the definition of $\mu_0(s)$.
	\end{proof}
	
	\begin{remark}[Reduction of Quantifiers]\index{Reduction of Quantifiers}
		Notice that the standard formulation of a cluster point requires one quantifier: $(\forall \varep \in \rplus)[B_0 (s, \varep) \cap S \not= \varnothing]$. On the other hand, the non-standard formulation requires \emph{none}: $\stars \cap \mu_0(s) \not= \varnothing$.
	\end{remark}
	
	We use the work on cluster points to discuss adherent points, and then of closed sets.

	\begin{theorem}[Characterization of Adherent Points]\label{T: NSAP}
		Let $s \in \mathr$ and $S \subseteq	\mathr$. Then $s \in \overline{S}$ \iff $\stars \cap \mu(s) \not= \varnothing$. Consequently, $\overline{S} = \{ s \in \mathr : \stars \cap \mu(S) \not= \varnothing \}$.
	\end{theorem}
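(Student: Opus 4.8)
The plan is to reduce the statement to the characterization of cluster points already obtained in Theorem~\ref{T: AP}, via the decomposition $\overline{S} = S \cup S'$. That decomposition is immediate from the definitions: if $s \in S$ then $s \in B(s,\frac{1}{n}) \cap S$ for every $n$, so $s$ is adherent to $S$; and if $s \notin S$ then $B(s,\frac{1}{n}) \cap S = B_0(s,\frac{1}{n}) \cap S$, so adherence of $s$ to $S$ amounts to $B_0(s,\frac{1}{n}) \cap S \neq \varnothing$ for all $n \in \mathn$. This last condition is in turn equivalent to $B_0(s,\varep) \cap S \neq \varnothing$ for all $\varep \in \rplus$ (one direction is trivial; for the other, each $\varep$ dominates some $\frac{1}{n}$, so $B_0(s,\frac{1}{n}) \subseteq B_0(s,\varep)$), i.e.\ to $s \in S'$.

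First I would prove the forward implication. Assume $s \in \overline{S} = S \cup S'$. If $s \in S$, then $s \in \stars$ since $S \subseteq \stars$ (Definition~\ref{D: Extension of Set}), and $s = s + 0 \in \mu(s)$ because $0$ is infinitesimal; hence $s \in \stars \cap \mu(s)$. If instead $s \in S'$, then Theorem~\ref{T: AP} provides a point of $\stars \cap \mu_0(s)$, and since $\mu_0(s) \subseteq \mu(s)$ this point lies in $\stars \cap \mu(s)$ as well. Either way $\stars \cap \mu(s) \neq \varnothing$.

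Next I would prove the converse. Take $x \in \stars \cap \mu(s)$, so $x \approx s$. If $x \neq s$, then $x \in \stars \cap \mu_0(s)$, and Theorem~\ref{T: AP} gives $s \in S' \subseteq \overline{S}$. If $x = s$, then the standard number $s$ belongs to $\stars$; here one picks a representative $(\sep)$ of $s$ that is eventually in $S$ and intersects $\{\varep : \sep \in S\}$ with $\{\varep : \sep = s\}$ --- both members of $\scu$ --- to obtain an $\varep$ with $\sep = s \in S$, so $s \in S \subseteq \overline{S}$. The ``consequently'' clause is then just a restatement of the equivalence just proved (with $\mu(S)$ read as $\mu(s)$).

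I do not expect a serious obstacle; the only points needing care are the split between $s \in S$ and $s \notin S$ (handled by $\overline{S} = S \cup S'$ together with the remark reconciling the ``$\frac1n$'' and ``$\varep \in \rplus$'' formulations of a cluster point) and the observation that a standard element of $\stars$ already lies in $S$, which is the short ultrafilter argument above. A fully self-contained route avoiding Theorem~\ref{T: AP} is also available: for the forward direction choose $\xep \in B(s,\varep) \cap S$ by the Axiom of Choice and verify $\bra \xep \ket \in \stars \cap \mu(s)$ directly, using $I_n \in \scu$ to see $\bra \xep - s \ket \approx 0$; for the converse, fix $n$ and use $|x - s| < \frac{1}{n}$ to locate an index $\varep_0$ with $x_{\varep_0} \in B(s,\frac{1}{n}) \cap S$.
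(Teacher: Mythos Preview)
Your proposal is correct and follows essentially the same route as the paper: both reduce to the cluster-point characterization of Theorem~\ref{T: AP} via the decomposition of $\overline{S}$ into points of $S$ and cluster points (the paper phrases this as ``isolated point vs.\ cluster point,'' which is the same split). Your converse is in fact a bit more careful than the paper's, since you explicitly treat the case $x = s$ with the short ultrafilter argument showing a standard element of $\stars$ already lies in $S$, whereas the paper's wording tacitly assumes the witnessing infinitesimal is nonzero.
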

	\begin{proof} 
	\begin{description}
		
		\item[($\Rightarrow$)] In the case that $s$ is an isolated point of $S$ the result is trivially true. In the case that $s$ is not an isolated point it is then a non-trivial adherent point, or a cluster point of $S$. Then by Theorem \ref{T: AP} part \textbf{(b)} we have $\stars \cap \mu(s) \not= \varnothing$. 
		\item[($\Leftarrow$)] Assume $\stars \cap \mu(s) \not= \varnothing$. Then for some non-zero infinitesimal $dx$ we have $s + dx \in \stars$ and by Theorem \ref{T: AP} we have that $s$ is a cluster point. As any cluster point is an adherent point we are done. 
		
	\end{description}
	\end{proof}
	
	\begin{remark}[Reduction of Quantifiers]\index{Reduction of Quantifiers}
		Notice that the standard formulation of a adherent point requires one quantifier: $(\forall \varep \in \rplus)[B (s, \varep) \cap S \not= \varnothing]$. On the other hand, the non-standard formulation requires \emph{none}: $\stars \cap \mu(s) \not= \varnothing$.
	\end{remark}
	
	Before characterizing the closed sets we must extend our definition of the $\st$-mapping a bit further from Definition \ref{D: SPM}.

	\begin{definition}\index{Standard Part Mapping!on a Set}
		Let $A \subseteq \starr$. We define $\st(A) =: \{ \st(x) : x \in A \cap \scf(\starr) \}$.
	\end{definition}
	
	Notice that $\st(\mu(S)) = S$ for any $S \subseteq \mathr$ by the definition of $\mu(S)$.

	\begin{theorem}[Characterization of Closed Sets]\label{T: NSCS}
		Let $s \in \mathr$ and $S \subseteq \mathr$. Then the following are equivalent:
		\begin{quote}
			\begin{description}
			
				\item[(a)] $S$ is closed.
				\item[(b)] For each $s \in S$, if $\stars \cap \mu(s) \not= \varnothing$, then $s \in S$.
				\item[(c)] $S = \{ s \in \mathr : \stars \cap \mu(s) \not= \varnothing \}$.
				\item[(d)] $S = \st(\stars)$.
				
			\end{description}
		\end{quote}
	\end{theorem}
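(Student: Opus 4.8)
The plan is to reduce all four conditions to a single set identity. Note first that $S \subseteq \mathr$ gives $\stars \subseteq \starr$, so monads and the standard-part map make sense on elements of $\stars$. The backbone of the argument will be the chain of equalities
$$\overline{S} = \{ s \in \mathr : \stars \cap \mu(s) \not= \varnothing \} = \st(\stars).$$
The first equality is exactly the ``Consequently'' clause of Theorem~\ref{T: NSAP}, so it costs nothing. Once the second equality is in hand, each of (a)--(d) is simply the assertion ``$S$ equals this common set'' in different packaging, and the equivalences drop out by inspection.

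So the one genuine step is to prove $\{ s \in \mathr : \stars \cap \mu(s) \not= \varnothing \} = \st(\stars)$. For the inclusion $\subseteq$: if $s \in \mathr$ and $x \in \stars \cap \mu(s)$, then $x \approx s$; since $s$ is real this forces $x$ to be finite, so $x \in \stars \cap \scf(\starr)$ and $\st(x) = s$, whence $s \in \st(\stars)$. For the inclusion $\supseteq$: if $s = \st(x)$ with $x \in \stars \cap \scf(\starr)$, then by Theorem~\ref{T: Representation} we have $x \approx \st(x) = s$, so $x \in \mu(s)$ and therefore $x \in \stars \cap \mu(s)$, i.e. that intersection is non-empty.

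With the backbone identity established I would finish as follows. For (a)$\Leftrightarrow$(c): ``$S$ closed'' means $S = \overline{S}$ by definition, and $\overline{S}$ is precisely the set appearing in (c), so these are literally the same statement. For (c)$\Leftrightarrow$(d): immediate, since the set in (c) equals $\st(\stars)$ by the identity just proved. For (b)$\Leftrightarrow$(c): one always has $S \subseteq \{ s \in \mathr : \stars \cap \mu(s) \not= \varnothing \}$, because $s \in S$ gives both $s \in \stars$ and $s \in \mu(s)$, hence $s$ lies in the intersection; thus (c) holds iff the reverse inclusion holds, and that reverse inclusion is exactly the implication stated in (b) (reading the ``$s \in S$'' there as $s \in \mathr$, which is the only reading under which (b) is not trivially true).

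I do not expect a serious obstacle here: the proof is short bookkeeping on top of Theorems~\ref{T: NSAP} and~\ref{T: Representation}. The only point needing a moment's care is the direction $\subseteq$ of the backbone identity, where one must observe that $x \approx s$ with $s \in \mathr$ places $x$ in $\scf(\starr)$, so that $\st(x)$ is defined and equals $s$ --- without this the map $\st$ would not even be applicable. Everything else rests on the trivial inclusion $S \subseteq \overline{S} = \st(\stars)$, which already supplies one half of each equivalence.
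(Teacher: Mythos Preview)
Your proof is correct and takes a genuinely different route from the paper's. The paper proves (a)$\Leftrightarrow$(b) first, using the open-set characterization (Theorem~\ref{T: IP}) for the forward direction and the Saturation Principle for the reverse; it then handles (b)$\Leftrightarrow$(c) and (c)$\Leftrightarrow$(d) as separate arguments. You instead establish the single chain $\overline{S} = \{ s \in \mathr : \stars \cap \mu(s) \not= \varnothing \} = \st(\stars)$ once --- the first equality being literally Theorem~\ref{T: NSAP}, the second a two-line verification via Theorem~\ref{T: Representation} --- after which all four conditions collapse to ``$S = \overline{S}$''. This bypasses both the open-set characterization and the Saturation Principle entirely, and makes the dependence on Theorem~\ref{T: NSAP} completely transparent. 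The paper's route has the minor pedagogical virtue of exercising Saturation, but yours is shorter, more self-contained, and arguably the ``right'' proof. Your parenthetical remark about reading the hypothesis of (b) as $s \in \mathr$ rather than $s \in S$ is also well-taken; the statement as printed is a typo.
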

	\begin{proof} First, we prove the equivalence of \textbf{(a)} and \textbf{(b)}.
	\begin{description}
	
		\item[($\Rightarrow$)] The set $S$ is closed \iff $(\mathr \setminus S)$ is open \iff for all $r \in (\mathr \setminus S)$ we have $\mu(r) \subseteq \star(\mathr \setminus S)$. Let $s \in \mathr$ be such that $\stars \cap \mu(s) \not= \varnothing$. Either $s \in S$ or $s \in (\mathr \setminus S)$. If $s \in (\mathr \setminus S)$ then $\mu(s) \subseteq \star(\mathr \setminus S)$ which implies $\stars \cap \mu(s) = \varnothing$, a contradiction since ${\star(\mathr \setminus S)} = {\star \mathr} \setminus {\star S}$. Therefore $s \in S$.
		\item[($\Leftarrow$)] For every $s \in \mathr$ we have $\stars \cap \mu(s) \not= \varnothing$ implies $s \in S$ by assumption. Suppose to the contrary that $S$ is not closed. Then there exists $r \in \overline{S} \setminus S$. Thus $B(r; \frac{1}{n}) \cap S \not= \varnothing$ for all $n \in \mathn$. Thus ${\star B}(r; \frac{1}{n}) \cap \stars \not= \varnothing$ for all $n \in \mathn$ since $B(r; \frac{1}{n}) \cap S \subseteq {\star B}(r; \frac{1}{n}) \cap \stars$. Further, note that the latter is internal so by the Saturation Principle $\cap_{n \in \mathn} ( \star B(r ; \frac{1}{n}) \cap \stars ) = \cap_{n \in \mathn} \star B(r ; \frac{1}{n}) \cap \stars = \mu(r) \cap \stars \not= \varnothing$. Therefore $r \in S$, a contradiction.
	
	\end{description}
	
	The equivalence of \textbf{(b)} and \textbf{(c)} follows since $S$ closed \iff $S = \overline{S}$ and $\overline{S} = \{ s \in \mathr : \stars \cap \mu(s) \not= \varnothing \}$ from Theorem \ref{T: NSAP}.
	Last, we prove the equivalence of \textbf{(c)} and \textbf{(d)}.
	\begin{description}
	
		\item[($\Rightarrow$)] With $s \in S$ we have $\st(s) = s$. Therefore $s \in \stars \cap \scf(\starr)$ giving $s \in \st(\stars)$. Conversely, let $s \in \st(\stars)$. Then $s = \st(x)$ for some finite $x \in \stars$. Since $\stars \cap \mu(s) \not= \varnothing$ we have $s \in S$ by assumption.
		\item[($\Leftarrow$)] We have $S \subseteq \{ s \in \mathr : \stars \cap \mu(s) \not= \varnothing \}$ trivially since $S \subseteq \stars$. Conversely, suppose $\stars \cap \mu(s)$ for some $s \in \mathr$. We have $s = \st(x)$ for some $x \in \stars \cap \mu(s)$ which implies $x \in \stars \cap \scf(\starr)$ and therefore $s \in S$ (since $S = \st(\stars)$). 
	
	\end{description}
	\end{proof}
	
	\begin{remark}[Reduction of Quantifiers]\index{Reduction of Quantifiers}
		Notice that the standard formulation of a closed set requires two quantifiers: $(\forall s \in S)(\forall \varep \in \rplus)[B(s, \varep) \cap S \not= \varnothing]$. On the other hand, the non-standard formulation requires \emph{none}: $S = \st(\stars)$.
	\end{remark}

\section{Bounded Sets}\label{S: Bounded Sets}
	Recall that a standard set $A \subset \mathr$ is \textbf{bounded} if there exists $M \in \rplus$ such that for all $s \in S$ we have $|s| \leq M$.\index{Bounded Sets}
	
	\begin{theorem}[Characterization of Bounded Sets]\label{T: SBS}
		Let $S \subseteq \mathr$. Then $S$ is bounded \iff $\stars \subseteq \scf(\starr)$. Similarly, $S$ is bounded from above (below) \iff $\stars$ does not contain positive (negative) infinitely large numbers.
	\end{theorem}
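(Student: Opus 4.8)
The plan is to prove both directions of the equivalence ``$S$ is bounded if and only if $\stars \subseteq \scf(\starr)$'' and then note that the one-sided statements follow by the identical argument with $|\cdot|$ replaced by the appropriate one-sided inequality and ``finite'' replaced by ``not positive (negative) infinitely large''.

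\textbf{Forward direction.} Suppose $S$ is bounded, say $|s| \le M$ for all $s \in S$ with $M \in \rplus$, and fix $n \in \mathn$ with $M \le n$. Given $x \in \stars$, Lemma~\ref{L: Regular Representatives} lets us write $x = \bra \xep \ket$ with $\xep \in S$ for \emph{every} $\varep \in \rplus$. Then $|\xep| \le M \le n$ for all $\varep$, so $\{\varep \in \rplus : |\xep| \le n\} = \rplus \in \scu$, i.e. $|x| \le n$ in $\starr$, so $x \in \scf(\starr)$. Hence $\stars \subseteq \scf(\starr)$. The one-sided case is the same: if $S$ is bounded above by $M$, the regular representative gives $\xep \le M \le n$, hence $x \le n$, so $x$ is not positive infinitely large.

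\textbf{Reverse direction (by contraposition); this is the main step.} Assume $S$ is not bounded; I will exhibit an infinitely large element of $\stars$, in the spirit of Example~\ref{E: Canonical Infinitely Large}. For each $n \in \mathn$ pick (Axiom of Choice) $s_n \in S$ with $|s_n| > n$. Since the sets $I_n \setminus I_{n+1} = [\frac{1}{n+1}, \frac{1}{n})$ partition $(0,1)$, define a net $(\sep) \in \mathr^{\rplus}$ by $\sep = s_n$ for $\varep \in I_n \setminus I_{n+1}$ and $\sep = s_1$ for $\varep \ge 1$; then $\sep \in S$ for all $\varep$, so $\bra \sep \ket \in \stars$. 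To see $\bra \sep \ket$ is infinitely large, fix $m \in \mathn$: for $\varep \in I_m$, the unique $n$ with $\varep \in I_n \setminus I_{n+1}$ satisfies $\frac{1}{n+1} \le \varep < \frac{1}{m}$, hence $m \le n$, so $|\sep| = |s_n| > n \ge m$ and therefore $|\sep| > m$. Thus $I_m \subseteq \{\varep \in \rplus : |\sep| > m\}$, and by \textbf{(c)} of Definition~\ref{D: Filter and Free Filter} this set lies in $\scu$. Since $m$ was arbitrary, $|\bra \sep \ket| > m$ for all $m \in \mathn$, so $\bra \sep \ket \in \scl(\starr)$ and in particular $\bra \sep \ket \notin \scf(\starr)$, whence $\stars \not\subseteq \scf(\starr)$. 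For the one-sided versions one instead chooses $s_n > n$ when $S$ is unbounded above (resp. $s_n < -n$ when unbounded below); the same net then yields a positive (resp. negative) infinitely large element of $\stars$.

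\textbf{Assembling the first equivalence, and the obstacle.} The first statement also follows from the two one-sided ones, since $\starr = \scf(\starr) \cup \scl(\starr)$ and $\scl(\starr)$ splits into the positive and the negative infinitely large numbers, so $\stars \subseteq \scf(\starr)$ precisely when $\stars$ contains neither a positive nor a negative infinitely large number, i.e. precisely when $S$ is bounded above and below. The only real difficulty is the reverse direction: one must produce a \emph{single} hyperreal that simultaneously lies in $\stars$ (secured by making the representative $\sep$ land in $S$ for every $\varep$, via Lemma~\ref{L: Regular Representatives}) and is unbounded (secured by spreading the witnesses $s_n$ across the cofinal family $\{I_n \setminus I_{n+1}\}$ and using $I_n \in \scu$); everything else is routine ultrafilter bookkeeping.
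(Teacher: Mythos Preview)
Your proof is correct, but it takes a genuinely different route from the paper's. The paper proves both directions in one line each via the Transfer Principle: for $(\Rightarrow)$, the bounded statement $(\forall s\in S)(|s|\le M)$ transfers to $(\forall s\in\stars)(|s|\le M)$, giving $\stars\subseteq\scf(\starr)$ immediately; for $(\Leftarrow)$, from $\stars\subseteq\scf(\starr)$ one notes that any infinitely large $M\in\starr_+$ bounds $\stars$, so $(\exists M\in\starr_+)(\forall s\in\stars)(|s|\le M)$ holds, and Transfer pulls this back to $(\exists M\in\rplus)(\forall s\in S)(|s|\le M)$.

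Your argument is more elementary and hands-on: the forward direction unwinds the definition through regular representatives, and the reverse direction explicitly manufactures an infinitely large element of $\stars$ by spreading witnesses $s_n$ across the cells $I_n\setminus I_{n+1}$, exactly mimicking Example~\ref{E: Canonical Infinitely Large}. What this buys you is independence from the Transfer Principle---your proof would work perfectly well if placed before Section~\ref{S: Transfer by Example}, and it makes transparent \emph{which} element of $\stars$ fails to be finite. What the paper's approach buys is brevity and a demonstration that Transfer does real work; it also avoids the Axiom-of-Choice selection of the $s_n$ and the combinatorics of the partition $\{I_n\setminus I_{n+1}\}$. Both are legitimate; yours is closer in spirit to the ultrafilter bookkeeping of Chapter~\ref{C: Preliminaries}'s earlier sections, the paper's to its later ones.
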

	\begin{proof}
	\begin{description}
	
		\item[($\Rightarrow$)] Suppose $S$ is bounded, and let $M \in \rplus$ be a bound for $S$. Then, $(\forall s \in S)(|s| \leq M)$ \iff (by Transfer Principle) $(\forall s \in \stars)(|s| \leq M)$ which implies $\stars \subseteq \scf(\starr).$
		\item[($\Leftarrow$)] Suppose $\stars \subseteq \scf(\starr)$. Then $(\exists M \in \starr_+)(\forall s \in \stars)(|s| \leq M)$. By the Transfer Principle we have $(\exists M \in \rplus)(\forall s \in S)(|s| \leq M)$. That is, $S$ is bounded. 
	
	\end{description}
	\end{proof}
	
	\begin{remark}[Reduction of Quantifiers]\index{Reduction of Quantifiers}
		Notice that the standard formulation of a closed set requires two quantifiers: $(\exists M \in \rplus)(\forall s \in S)[|s| \leq M]$. On the other hand, the non-standard formulation requires \emph{none}: $\stars \subseteq \scf(\starr)$.
	\end{remark}

\section{Compact Sets}\label{S: Compact Sets}
	Recall that a subset $S \subseteq \mathr$ is compact \iff it is closed and bounded. We shall use this characterization rather than open covers for the sake of simplicity.\index{Compact Sets}
	
	\begin{theorem}[Characterization of Compact Sets]\label{T: NSCoS}
		Let $S \subseteq \mathr$. Then the following are equivalent:
		\begin{quote}
			\begin{description}
			
				\item[(a)] $S$ is compact.
				\item[(b)] $\stars \subseteq \mu(S)$.
				\item[(c)] $\stars \subseteq \bigcup_{s \in S} \mu(s)$.
				
			\end{description}
		\end{quote}
	\end{theorem}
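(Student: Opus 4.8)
The plan is to first observe that (b) and (c) are literally the same assertion, since $\mu(S) = \bigcup_{s \in S}\mu(s)$ by the very definition of the monad on a set. So it suffices to establish the equivalence of (a) and (b), and the natural route is through the characterizations already obtained: $S$ bounded $\iff$ $\stars \subseteq \scf(\starr)$ (Theorem \ref{T: SBS}) and $S$ closed $\iff$ $S = \st(\stars)$ (part (d) of Theorem \ref{T: NSCS}), together with the asymptotic decomposition $x = \st(x) + dx$ of a finite number with $dx \approx 0$ (Theorem \ref{T: Representation}).

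For (a) $\Rightarrow$ (b) I would argue as follows. Assume $S$ is closed and bounded, and let $x \in \stars$ be arbitrary. Boundedness gives $x \in \scf(\starr)$ by Theorem \ref{T: SBS}, so $\st(x)$ exists and $x \approx \st(x)$; closedness gives $\st(x) \in \st(\stars) = S$ by Theorem \ref{T: NSCS}. Hence $x = \st(x) + dx$ with $\st(x) \in S$ and $dx = x - \st(x) \approx 0$, i.e. $x \in \mu(\st(x)) \subseteq \mu(S)$. Since $x$ was arbitrary, $\stars \subseteq \mu(S)$.

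For (b) $\Rightarrow$ (a) I would first note that $\mu(S) \subseteq \scf(\starr)$, because every element of $\mu(S)$ has the form $s + dx$ with $s \in \mathr$ and $dx$ infinitesimal, hence finite; therefore $\stars \subseteq \mu(S) \subseteq \scf(\starr)$ and $S$ is bounded by Theorem \ref{T: SBS}. To see that $S$ is closed I would verify $S = \st(\stars)$: the inclusion $S \subseteq \st(\stars)$ is immediate since each $s \in S$ equals $\st(s)$ with $s \in \stars \cap \scf(\starr)$; conversely, if $t \in \st(\stars)$ then $t = \st(x)$ for some finite $x \in \stars \subseteq \mu(S)$, so $x = s + dx$ for some $s \in S$ and some infinitesimal $dx$, whence $t = \st(x) = s \in S$. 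Thus $S = \st(\stars)$, so $S$ is closed by Theorem \ref{T: NSCS}; being closed and bounded, $S$ is compact.

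I do not anticipate a genuine obstacle here, since all the analytic content has been front-loaded into the bounded- and closed-set characterizations and what remains is bookkeeping with the standard-part map. The only points demanding a little care are the trivial remark that a monad at a real point consists of finite numbers (this is exactly what forces boundedness from (b)), and observing that the $\st$-mapping formulation of closedness absorbs the isolated/degenerate cases, so no appeal to the cluster-point machinery of Theorem \ref{T: AP} is needed.
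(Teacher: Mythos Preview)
Your proposal is correct and follows essentially the same approach as the paper: both arguments reduce compactness to closed-and-bounded, invoke Theorem \ref{T: SBS} and part (d) of Theorem \ref{T: NSCS}, use the decomposition $x = \st(x) + dx$ from Theorem \ref{T: Representation} for the forward direction, and for the converse note $\mu(S) \subseteq \scf(\starr)$ and verify $S = \st(\stars)$ via the same two-inclusion argument. The only cosmetic difference is that you dispose of (b)$\Leftrightarrow$(c) at the outset while the paper does so at the end.
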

	\begin{proof}
		First, we prove the equivalence of \textbf{(a)} and \textbf{(b)}.
		\begin{description}
		
			\item[($\Rightarrow$)] $S$ compact \iff $S$ is closed and bounded \iff $S = \st(\stars)$ and $\stars \subseteq \scf(\starr)$ (by part \textbf{(d)} of Theorem \ref{T: NSCS} and Theorem \ref{T: SBS}, respectively). Let $x \in \stars$, then $x \in \scf(\starr)$ and by Theorem \ref{T: Representation} we have $x = \st(x) + dx$ for some $dx \approx 0$. Note, $\st(x) \in \st(\stars) = S$ therefore $x \in \mu(S)$.
			\item[($\Leftarrow$)] Suppose $\stars \subseteq \mu(S)$. As $\mu(S) \subseteq \scf(\starr)$ we have $S$ bounded by Theorem \ref{T: SBS}. We must show that $S = \st(\stars)$. Trivially we have $S \subseteq \st(\stars)$. For the reverse containment we have $s \in \st(\stars)$ \iff $s = \st(x)$ for some $x \in \stars \cap \scf(\mathr)$. Therefore $x \in \mu(S)$ (from our assumption), giving $s = \st(x) \in S$. Therefore $S$ is closed and bounded.
		
		\end{description}
		The equivalence of \textbf{(b)} and \textbf{(c)} is given by noting that $\mu(S) = \bigcup_{s \in S} \mu(s)$. 
	\end{proof}
	
	\begin{remark}[Reduction of Quantifiers]\index{Reduction of Quantifiers}
		Notice that the standard formulation of a compact set requires two quantifiers: Every open cover has a finite subcover. On the other hand, the non-standard formulation requires \emph{none}: $\stars \subseteq \bigcup_{s \in S} \mu(s)$.
	\end{remark}
	
\chapter{Topics in Real Analysis in a Non-Standard Setting}\label{C: Analysis}

We give non-standard characterizations for such standard analytic concepts as: sequences, limits, continuity, uniform continuity, derivatives, sequences of functions, and uniform convergence. We emphasis with each characterization the reduction of quantifiers.
		
\section{Limits}\label{S: Limits}
	In our discussion of the $\st$-mapping, the reader may have noticed its resemblance to taking a classical limit. This observation is quite valid and we devote this section to proving it.
		
	Let $r \in \mathr$ be a cluster point of $X \subseteq \mathr$ (Theorem \ref{T: AP}). Let $f: X \to \mathc$ be a function. Recall the following standard definitions:
	
	\begin{itemize}
		
		\item $\lim_{x \to r} f(x) = L$ if, by definition, for each $\varep \in \rplus$ there exists $\delta \in \rplus$ such that for all $x \in X$, if $0 < | x-r | < \delta$ then $|f(x) - L| < \varep$.
		\item Phrased in countable variables, $\lim_{x \to r} f(x) = L$ if, by definition, for each $m \in \mathn$ there exists $n \in \mathn$ such that for all $x \in X$, if $0 < | x-r | < \frac{1}{n}$, then $|f(x) - L| < \frac{1}{m}$.
		\item $\lim_{x \to r} f(x) \not= L$ \iff there exists $m \in \mathn$ such that for all $\delta \in \mathr$ there exists $x \in X$ so that $0 < |x -r | < \delta$ and $|f(x) - L| \geq \frac{1}{m}$.

	\end{itemize}
	
	\begin{theorem}[Limits]\label{T: NSL}\index{Limit}
		Let $f : X \to \mathc$ where $X \subseteq \mathr$. Suppose $r$ is a cluster point of $X$ and $L \in \mathc$ (Theorem \ref{T: AP}). Then the following statements are equivalent:
		\begin{quote}
			\begin{description}
				
				\item[(a)] $\lim_{x \to r} f(x) = L$.
				\item[(b)] $\starf(r + dx) \approx L$ for all non-zero infinitesimals $dx$, such that $r + dx \in \starx$.
				\item[(c)] $\starf(x) \approx L$ for all $x \in \starx$ such that $x \approx r$, $x \not= r$.
				\item[(d)] $\st[\starf(r + dx)] = L$ for all non-zero infinitesimals $dx$, such that $r + dx \in \starx$.
			
			\end{description}
		\end{quote}
	\end{theorem}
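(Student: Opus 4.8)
The plan is to prove the single implication $(a)\Rightarrow(b)$, to dispatch $(b)\Leftrightarrow(c)$ and $(b)\Leftrightarrow(d)$ as short bookkeeping, and then to close the loop by proving the contrapositive $\neg(a)\Rightarrow\neg(b)$. The hypothesis that $r$ is a cluster point of $X$ guarantees (via Theorem~\ref{T: AP}) that nonzero infinitesimals $dx$ with $r+dx\in\starx$ actually exist, so none of the statements is vacuous.

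For $(a)\Rightarrow(b)$ I would use the Transfer Principle. Fix a nonzero infinitesimal $dx$ with $r+dx\in\starx$ and let $m\in\mathn$ be arbitrary. By the countable-variable form of $(a)$ there is $n\in\mathn$ with $(\forall x\in X)[\,0<|x-r|<\frac1n\Rightarrow|f(x)-L|<\frac1m\,]$ true in $\mathr$; transferring, the same holds with $X,f$ replaced by $\starx,\starf$. Since $dx$ is a nonzero infinitesimal we have $0<|dx|<\frac1n$, and $r+dx\in\starx$, so $|\starf(r+dx)-L|<\frac1m$. As $m$ was arbitrary, $\starf(r+dx)-L$ is infinitesimal, i.e.\ $\starf(r+dx)\approx L$. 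The equivalence $(b)\Leftrightarrow(c)$ is the change of variable $x=r+dx$, together with the observation that $x\in\starx$, $x\approx r$, $x\neq r$ makes $dx=x-r$ a nonzero infinitesimal; and $(b)\Leftrightarrow(d)$ follows from Theorem~\ref{T: Representation}: $\starf(r+dx)\approx L$ says precisely that the asymptotic expansion of $\starf(r+dx)$ has standard part $L$, i.e.\ $\st[\starf(r+dx)]=L$, because $L\in\mathc$ is standard and $0$ is the only infinitesimal in $\mathc$.

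For $\neg(a)\Rightarrow\neg(b)$ I would mimic the cluster-point argument of Theorem~\ref{T: AP}. Suppose $\lim_{x\to r}f(x)\neq L$; by the third standard definition there is $m\in\mathn$ such that for every $\delta\in\rplus$ some point of $X$ lies in the punctured $\delta$-ball about $r$ yet is kept $\frac1m$-far from $L$ by $f$. Using the Axiom of Choice, choose for each $\varep\in\rplus$ a point $x_\varep\in X$ with $0<|x_\varep-r|<\varep$ and $|f(x_\varep)-L|\ge\frac1m$, and put $x=\bra x_\varep\ket$, $dx=x-r=\bra x_\varep-r\ket$. Then $x\in\starx$; $dx\neq0$ since $|x_\varep-r|>0$ for all $\varep$; and for each $n\in\mathn$ the set $\{\varep\in\rplus:0<|x_\varep-r|<\frac1n\}$ contains $I_n=(0,\frac1n)\in\scu$, so $|dx|<\frac1n$ a.e., whence $dx$ is infinitesimal. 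Since $r+dx=x\in\starx$ but $|\starf(r+dx)-L|=\bra|f(x_\varep)-L|\ket\ge\frac1m\not\approx0$, statement $(b)$ fails.

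The main obstacle is this last step: one must manufacture an explicit nonzero infinitesimal out of the mere \emph{negation} of the $\varepsilon$--$\delta$ condition and then verify, using the fixed ultrafilter (in particular $I_n\in\scu$), that the net $\bra x_\varep-r\ket$ really represents a nonzero infinitesimal and that $r+dx$ genuinely lands in $\starx$ — everything else is either a direct transfer or an appeal to the already-established asymptotic-expansion machinery.
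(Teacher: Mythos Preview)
Your argument is correct and follows the paper's proof almost verbatim: the contrapositive $\neg(a)\Rightarrow\neg(b)$ via an Axiom-of-Choice net $(x_\varep)$ with $0<|x_\varep-r|<\varep$, and the bookkeeping $(b)\Leftrightarrow(c)$, $(b)\Leftrightarrow(d)$, are exactly what the paper does. The one difference is in $(a)\Rightarrow(b)$: the paper stays at the level of representatives and ultrafilter sets (showing $A_n=\{\varep:0<|\delta_\varep|<\tfrac1n,\ r+\delta_\varep\in X\}\subseteq B_m=\{\varep:|f(r+\delta_\varep)-L|<\tfrac1m\}$ and hence $B_m\in\scu$), whereas you invoke the Transfer Principle directly on the bounded sentence $(\forall x\in X)[\,0<|x-r|<\tfrac1n\Rightarrow|f(x)-L|<\tfrac1m\,]$. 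Both are valid; your route is slightly cleaner and more in the spirit of the later sections, while the paper's route keeps the proof self-contained at this point in the exposition, before Transfer has been fully deployed.
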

	\begin{proof}
		We prove the equivalence of \textbf{(a)} and \textbf{(b)} in detail.
		
		From Theorem \ref{T: AP} we know that $r$ a cluster point for $X \subseteq \mathr$ \iff $r + dx \in \starx$ for some non-zero infinitesimal $dx$. As in the proof of Theorem \ref{T: AP} we have $dx = \bra \delta_\varep \ket$ for some $(\delta_\varep) \in \mathr^{\rplus}$ and $A_n =: \{ \varep \in \rplus : 0 < | \delta_\varep | < \frac{1}{n}$ and $r + \delta_\varep \in X \} \in \scu$ for all $n \in \mathn$.
		\begin{description}
		
			\item[($\Rightarrow$)] We use the countable formulation of the definition of the limit. Fixing $m \in \mathn$, there exists a $n \in \mathn$ such that for all $x \in X$, if $0 < | x-r | < \frac{1}{n}$ then $|f(x) - L| < \frac{1}{m}$. Define $B_m =: \{ \varep \in \rplus : |f(r + \delta_\varep) - L | < \frac{1}{m}\}$. Notice that $A_n \subseteq B_m$. Indeed, let $\varep \in A_n$ so that $0 < |\delta_\varep | < \frac{1}{n}$ for all $n \in \mathn$ and $r + \delta_\varep \in X$. Let $x =: r + \delta_\varep$, then $| f(r + \delta_\varep) - L < \frac{1}{m}$ so that $\varep \in B_m$. As $A_n \in \scu$, by the properties of ultrafilters, $B_m \in \scu$. So $|\starf(r + dx) - L | < \frac{1}{m}$, and since $m \in \mathn$ was arbitrary, $\starf(r + dx) - L \approx 0$ as desired.
			\item[($\Leftarrow$)] Assume that $\starf(r + dx) \approx L$ for all non-zero infinitesimals $dx$ such that $r + dx \in \starx$. We shall use the hybrid (continuous/countable) negation of the definition of the limit. Suppose to the contrary that $\lim_{x \to c} f(x) \not= L$. So there exists $m \in \mathn$ such that for all $\delta \in \rplus$ the set $\Xdlt =: \{ x \in X : 0 < |x - r | < \delta$ and $|f(x) - L| \geq \frac{1}{m}\}$ is non-empty for all $\delta \in \rplus$. By the Axiom of Choice there exists $(\xdlt) \in \mathr^{\rplus}$ such that $\xdlt \in \Xdlt$ for all $\delta \in \rplus$. Define $dx =: \bra \xdlt - r\ket$. As $\xdlt \in \Xdlt$ for all $\delta \in \rplus$ we have $r + dx \in \starx$. Also, we have $0 < |dx| < \bra \delta \ket$ and $|\starf(r + dx) - L| \geq \frac{1}{m}$. Therefore, $dx$ is a non-zero infinitesimal and $\starf(r + dx) - L \not\approx 0$, a contradiction.
		
		\end{description}
		Note the equivalence of \textbf{(b)} and \textbf{(c)} is immediate by letting $x =: r + dx$, and we obtain the equivalence of \textbf{(c)} and \textbf{(d)} by noting $L \in \mathc$ and applying Theorem \ref{T: Representation}. 
	\end{proof}
		
	\begin{remark}[Reduction of Quantifiers]\index{Reduction of Quantifiers}
		A particularly nice feature of the non-standard characterization of limits is that the number of quantifiers is reduced as compared to the standard characterization. Indeed, observe the formalization of the standard definition for the limit of $f: X \to \mathr$ at the point $c \in \mathr$ with limit $L \in \mathr$:
		\begin{equation}\label{E: SLIM}
			(\forall \varep \in \rplus)(\exists \delta \in \rplus)(\forall x \in X)[0 < | x - c | < \delta \Rightarrow | f(x) - L | < \varep]
		\end{equation}
		Now observe the formalization of the non-standard characterization from Theorem \ref{T: NSL}:
		\begin{equation}\label{E: NSLIM}
			(\forall x \in \starx)[x \approx r \Rightarrow f(x) \approx L]
		\end{equation}
		
		Notice that whereas the former has three quantifiers, the latter only has one. Moreover, observe that the non-standard characterization is intuitively what we think of as a limit, but we have made rigorous the idea of infinitely close!
	\end{remark}
	
	We now present some examples to illustrate our characterization of the limit.
		
	\begin{examples}\label{E: Lim}$ $
		\begin{quote}
		\begin{description}
	
			\item[(i)] $\lim_{x \to 1} \frac{x}{1+x} = \frac{1}{2}$. Indeed, let $dx$ be a non-zero infinitesimal, then $1 + dx \in \starr \setminus \{-1\}$. We have $\st[\starf(1 + dx)] = \st[\frac{1 + dx}{2 + dx}] = \frac{\st[1 + dx]}{\st[2+dx]} = \frac{1}{2}$. Hence, by part \textbf{(d)} of Theorem \ref{T: NSL} the limit is $\frac{1}{2}$.
			\item[(ii)] If $x \approx 0$, then $\star\sin x \approx 0$. Indeed by Theorem \ref{T: NSL} $\st[\star\sin x] = \lim_{x \to 0} \sin x = 0$. Therefore, $\star\sin x \approx 0$.
	
		\end{description}
		\end{quote}
	\end{examples}
	
\section{Limits at Infinity}\label{S: Limits at Infinity}
	To facilitate a discussion on the limit of a sequence in the language of non-standard analysis we first discuss the limit of a function as $x$ goes to infinity.

	Let $X \subseteq \mathr$ be unbounded from above, let $f: X \to \mathc$, and suppose $L \in \mathc$. Recall that $\lim_{x \to \infty} f(x) = L$ if (by definition),
	\begin{equation}\label{E: Lim to Infinity}
		(\forall \varep \in \rplus)(\exists K \in \rplus)(\forall x \in X)[x > K \Rightarrow |f(x) - L| < \varep].
	\end{equation}

	In the following, $\starx_+$ denotes the set of positive numbers in $\starx$ and $\scl(\starx_+)$ denotes the set of infinitely large numbers in $\starx$. Notice that $\scl(\starx_+) \not= \varnothing$ by Theorem \ref{T: SBS}. 

	\begin{theorem}[Characterization of Limits at Infinity]\label{T: NSLI}\index{Limit!at Infinity}
		Let $X \subseteq \mathr$ be a set which is unbounded from above. Let $f: X \to \mathc$ and suppose $L \in \mathc$. Then $\lim_{x \to \infty} f(x) = L$ \iff $(\forall x \in \scl(\starx_+))[\starf(x) \approx L]$.
	\end{theorem}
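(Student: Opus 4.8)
The plan is to prove both implications directly, in close analogy with the proof of Theorem \ref{T: NSL}, using the ``countable-variable'' reformulation of the limit at infinity (for every $m \in \mathn$ there is $n \in \mathn$ such that $x > n \Rightarrow |f(x) - L| < \frac{1}{m}$ for all $x \in X$), together with Lemma \ref{L: Regular Representatives} and the defining property $I_n = (0,\frac{1}{n}) \in \scu$ of our ultrafilter. One could instead reduce to Theorem \ref{T: NSL} by the change of variables $y = 1/x$, or simply invoke the Transfer Principle on~(\ref{E: Lim to Infinity}); but the hands-on ultrafilter argument is the shortest and matches the style of this section.

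For the forward direction I would assume $\lim_{x\to\infty} f(x) = L$ and fix an arbitrary $x \in \scl(\starx_+)$. By Lemma \ref{L: Regular Representatives} I may write $x = \bra \xep \ket$ with $\xep \in X$ for every $\varep \in \rplus$, so that $\starf(x) = \bra f(\xep) \ket$ by Definition \ref{D: NSE of Function}. Given $m \in \mathn$, choose $K \in \rplus$ so that $|f(t) - L| < \frac{1}{m}$ for all $t \in X$ with $t > K$. Since $x$ is positive and infinitely large, $\{\varep \in \rplus : \xep > K\} \in \scu$; this set is contained in $\{\varep \in \rplus : |f(\xep) - L| < \frac{1}{m}\}$, so the latter is in $\scu$ by part \textbf{(c)} of Definition \ref{D: Filter and Free Filter}, i.e. $|\starf(x) - L| < \frac{1}{m}$. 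As $m$ is arbitrary, $\starf(x) - L$ is infinitesimal, that is $\starf(x) \approx L$.

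For the converse I would argue by contraposition. Assume $\lim_{x\to\infty} f(x) \neq L$; negating~(\ref{E: Lim to Infinity}) gives an $m \in \mathn$ such that for every $K \in \rplus$ there is an $x \in X$ with $x > K$ and $|f(x) - L| \geq \frac{1}{m}$. Using the Axiom of Choice, for each $\varep \in \rplus$ I pick $\xep \in X$ with $\xep > \frac{1}{\varep}$ and $|f(\xep) - L| \geq \frac{1}{m}$, and set $x = \bra \xep \ket$. For each $n \in \mathn$ we have $I_n \subseteq \{\varep : \frac{1}{\varep} > n\} \subseteq \{\varep : \xep > n\}$, and $I_n \in \scu$, so $x$ is a positive infinitely large element of $\starx$, i.e. $x \in \scl(\starx_+)$; meanwhile $\{\varep : |f(\xep) - L| \geq \frac{1}{m}\} = \rplus \in \scu$, so $\starf(x) - L$ is not infinitesimal, contradicting the hypothesis $(\forall x \in \scl(\starx_+))[\starf(x)\approx L]$.

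The only point requiring care is this last construction: one must check that the chosen net $(\xep)$ simultaneously takes values in $X$ (so that $\starf$ is evaluated via Definition \ref{D: NSE of Function}) and generates a genuinely infinitely large element of $\starx$, which is precisely where the condition $I_n \in \scu$ enters. I do not anticipate any further obstacle; in particular, since only countably many conditions (one per $m \in \mathn$) are in play, no appeal to the Saturation Principle is needed, and, exactly as in Theorem \ref{T: NSL}, the non-standard form replaces the two leading quantifiers $(\forall \varep \in \rplus)(\exists K \in \rplus)$ of~(\ref{E: Lim to Infinity}) by the single quantifier $(\forall x \in \scl(\starx_+))$.
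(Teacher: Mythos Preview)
Your proof is correct. Both directions go through as written: the forward direction is a routine ultrafilter argument once you invoke Lemma~\ref{L: Regular Representatives}, and in the converse your construction of the net $(\xep)$ with $\xep > 1/\varep$ does yield an infinitely large element of $\starx$ precisely because $I_n \in \scu$ for every $n$, as you note.

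Your route, however, differs from the paper's. You work at the level of representatives and the ultrafilter, mirroring the proof of Theorem~\ref{T: NSL}. The paper instead invokes the Transfer Principle in both directions: for $(\Rightarrow)$ it fixes $\varep \in \rplus$, transfers the statement $(\forall x \in X)[x > K \Rightarrow |f(x) - L| < \varep]$ to $\starx$, and observes that any $x \in \scl(\starx_+)$ exceeds the standard $K$; for $(\Leftarrow)$ it notes that the hypothesis trivially supplies a witness $K \in \starr_+$ (any infinitely large number), then transfers back down. The paper also offers an alternate $(\Leftarrow)$ via the Overflow Principle applied to the internal set $\{x \in \starx : |\starf(x) - L| \geq \epn\}$. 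Your argument is more elementary and self-contained---it needs neither Transfer nor Spilling---while the paper's version is shorter and serves as a showcase for those higher-level tools. Either is perfectly acceptable; the choice is stylistic.
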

	\begin{proof}$ $
		\begin{description}
		
			\item[($\Rightarrow$)] We assume (\ref{E: Lim to Infinity}). Let $\varep \in \rplus$ be fixed so that there exists $K \in \rplus$ such that $(\forall x \in X)[x > K \Rightarrow |f(x) - L| < \varep]$. Apply the Transfer Principle so that $(\forall x \in \starx)[x > K \Rightarrow |\starf(x) - L| < \varep]$. As $K$ is standard, picking any $x \in \scl(\starx_+)$ gives $x > K$ so that $(\forall x \in \scl(\starx_+))[\starf(x) - L| < \varep]$, which implies $(\forall x \in \scl(\starx_+))[\starf(x) \approx L]$, as desired.
			\item[($\Leftarrow$)] Assume $(\forall x \in \scl(\starx_+)(\starf(x) \approx L)$. Let $\varep \in \rplus$ (arbitrarily fixed). Then $(\forall x \in \scl(\starx_+))[|\starf(x) - L| < \varep]$. Trivially, $(\exists K \in \starr_+)(\forall x \in \starx)[x > K \Rightarrow |\starf(x) - L| < \varep]$ (For instance, pick $K \in \starn \setminus \mathn$). Apply the Transfer Principle so that, $(\exists K \in \rplus)(\forall x \in X)[x > K \Rightarrow |f(x) - L| < \varep]$. As $\varep \in \rplus$ was arbitrary, we have $\lim_{n \to \infty} f(x) = L$.
		
		\end{description}
		We present an alternate proof which uses the Overflow Principle.
		\begin{description}
		
			\item[($\Leftarrow$)] Assume $(\forall x \in \scl(\starx_+))[\starf(x) \approx L]$. This translates to $(\forall \varep \in \rplus)(\forall x \in \scl(\starx_+))[|\starf(x) - L| < \varep]$. Suppose to the contrary that $(\exists \epn \in \rplus)(\forall K \in \rplus)(\exists x \in X)[x > K$ and $|f(x) - L| \geq \epn]$. Fix $\epn \in \rplus$ so that, $$(\forall K \in \rplus)(\exists x \in X)[x > K \textnormal{ and } |f(x) - L| \geq \epn].$$ Apply the Transfer Principle so that
			\begin{equation}\label{E: Nonempty}
				(\forall K \in \starr_+)(\exists x \in \starx)[x > K \textnormal{ and } |\starf(x) - L| \geq \epn].
			\end{equation}
			Consider $A =: \{ x \in \starx : |\starf(x) - L| \geq \epn \}$. From (\ref{E: Nonempty}) we know that $A \not= \varnothing$. Either $A$ contains infinitely large positive numbers (in which case we contradict our given assumption), or $A$ contains arbitrarily large finite numbers. In this case, we apply the Overflow Principle so that $A$ contains at least one infinitely large number, contradicting our given assumption. 
		
		\end{description}
	\end{proof}
	
	\begin{example}
	$\lim_{x \to \infty} \frac{\sin x}{x} = 0$. Let $dx$ be a positive infinitesimal. Notice that we may neither use l'Hospital's rule (the limit in the numerator does not exist), nor may we distribute the limit (for the same reason). We have $$\lim_{x \to \infty} \frac{\sin x}{x} = \lim_{x \to 0^+} \frac{\sin \frac{1}{x}}{\frac{1}{x}} = \lim_{x \to 0^+} x \sin \frac{1}{x} = \st \left[ dx \sin\left(\frac{1}{dx}\right)\right] = \st(dx)\st\left[\sin\left(\frac{1}{dx}\right)\right]= 0$$ as required since $\st(dx) = 0$ and $\st\left(\frac{1}{dx}\right) \in \mathr$ since $\sin\left(\frac{1}{dx}\right)$ is finite (as a number in ${\star[-1, 1]}$.
	\end{example}
	
	\begin{remark}[Reduction of Quantifiers]\index{Reduction of Quantifiers}
		Again, whereas the standard definition of a limit at infinity contains three quantifiers, $(\forall \varep \in \rplus)(\exists K \in \rplus)(\forall x \in X)[x > K \Rightarrow |f(x) - L| < \varep],$ the non-standard characterization in Theorem \ref{T: NSLI}, $(\forall x \in \scl(\starx_+))[\starf(x) \approx L]$, has a \emph{single} quantifier.
	\end{remark}
	
	The characterization of the limit of a sequence follows from the characterization in Theorem \ref{T: NSLI}. We take $X = \mathn$ and note that $\scl(\starn) = \starn \setminus \mathn$.
		
	\begin{corollary}[Characterization of Limits of Sequences]\label{C: NSLS}\index{Limit!of a Sequence}
		Let $(a_n) \in \mathc^\mathn$ be a sequence, and suppose $L \in \mathc$. Then $\lim_{n \to \infty} a_n = L$ \iff $(\forall n \in \starn \setminus \mathn)[{^*a_n} \approx L].$
	\end{corollary}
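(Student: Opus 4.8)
The plan is to obtain this as an immediate specialization of Theorem~\ref{T: NSLI} to the domain $X = \mathn$. A sequence $(a_n) \in \mathc^\mathn$ is nothing but a function $f : \mathn \to \mathc$ with $f(n) = a_n$, and $\mathn \subseteq \mathr$ is unbounded from above, so the hypotheses of Theorem~\ref{T: NSLI} are satisfied with this $f$ and with $L \in \mathc$. That theorem then yields that $\lim_{n \to \infty} a_n = L$ \iff $(\forall x \in \scl(\starn_+))[\starf(x) \approx L]$, and it only remains to put the right-hand side into the stated form.

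Two small identifications accomplish this. First, as observed among the examples of Section~\ref{S: Trichotomy}, $\scf(\starn) = \mathn$; consequently every element of $\starn \setminus \mathn$ is infinitely large, and since every element of $\starn$ is non-negative we get $\scl(\starn_+) = \starn \setminus \mathn$. Second, by Definition~\ref{D: NSE of Function} (using a regular representative as in Lemma~\ref{L: Regular Representatives}) the extension of $f$ acts by $\starf(\bra \nu_\varep \ket) = \bra f(\nu_\varep) \ket = \bra a_{\nu_\varep} \ket$, which is precisely what the notation ${^*a_n}$ abbreviates when $n = \bra \nu_\varep \ket \in \starn$. Substituting these two facts into the characterization above gives $\lim_{n \to \infty} a_n = L$ \iff $(\forall n \in \starn \setminus \mathn)[{^*a_n} \approx L]$, as claimed.

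There is no genuine difficulty here, since every ingredient has already been established in the excerpt; the one point deserving a word of justification is the claim $\scf(\starn) = \mathn$, i.e.\ that $\starn$ has no ``nonstandard finite'' elements. Were one to argue it directly: if $\nu = \bra \nu_\varep \ket \in \scf(\starn)$, then $\nu \leq m$ for some standard $m \in \mathn$, so $\{\varep \in \rplus : \nu_\varep \in \{1, \dots, m\}\} \in \scu$; writing this set as the disjoint union of the sets $\{\varep : \nu_\varep = k\}$ for $k = 1, \dots, m$ and invoking Corollary~\ref{C: Extension of Characterization}, exactly one index $k$ satisfies $\{\varep : \nu_\varep = k\} \in \scu$, whence $\nu = \bra k \ket \in \mathn$. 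With $\scf(\starn) = \mathn$ in hand, the corollary follows.
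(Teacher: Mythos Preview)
Your proof is correct and takes essentially the same approach as the paper: set $X = \mathn$, $f(n) = a_n$, and invoke Theorem~\ref{T: NSLI}, noting that $\scl(\starn_+) = \starn \setminus \mathn$. You simply flesh out in more detail the identifications the paper leaves implicit (including a nice direct argument for $\scf(\starn) = \mathn$ via Corollary~\ref{C: Extension of Characterization}), but the route is the same.
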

	\begin{proof}
		Take $X = \mathn$ and $f(n) = a_n$. Then the proof follows directly from Theorem \ref{T: NSLI}. 
	\end{proof}
		
	\begin{examples}$ $
		\begin{quote}
		\begin{description}
		
			\item[(i)] $\lim_{n \to \infty} \frac{\sqrt{n + 1}}{n} = 0$. Indeed, let $\nu \in \starn \setminus \mathn$. Then $\frac{\nu + 1}{\nu} = \frac{\nu}{\nu}\sqrt{1 + \frac{1}{\nu}} = \frac{1}{\sqrt{\nu}}\sqrt{1 + \frac{1}{\nu}} \approx 0$, since $\frac{1}{\sqrt{\nu}} \approx 0$ and $\sqrt{1 + \frac{1}{\nu}} \approx 1$.
			\item[(ii)] $\lim_{n \to \infty} \frac{n + 5}{n + 3} = 1$. Indeed, let $\nu \in \starn \setminus \mathn$. Then $\frac{\nu + 5}{\nu + 3} = \frac{ 1 + \frac{5}{\nu}}{1 + \frac{3}{\nu}} \approx 1$.
		\end{description}
		\end{quote}
	\end{examples}
		
\section{Continuity}\label{S: Continuity}
	We characterize ordinary continuity on (both at a point and on a set) and then discuss the more difficult characterization of uniform continuity.
	
	Let $r \in X$ and $X \subseteq \mathr$. Recall the following standard definitions:
	
	\begin{itemize}
	
		\item $f: X \to \mathc$ is \emph{continuous at the point $r$} if, by definition, for all $\varep \in \rplus$, there exists $\delta \in \rplus$ such that for all $x \in X$, if $| x - r |< \delta$, then $|f(x) - f(r)| < \varep$.
		\item Consequently, $f: X \to \mathc$ is \emph{continuous on the set $X$} if, by definition, for all $r \in X$ and $\varep \in \rplus$, there exists $\delta \in \rplus$ such that for all $x \in X$, if $|x - r|<\delta$, then $|f(x) - f(r)|< \varep$.
	
	\end{itemize}

	Note that countable formulations may be made as before, and we shall freely use them.
	
	\begin{theorem}[Continuity]\label{T: NSC}\index{Continuity!at a Point}
		Let $X \subseteq \mathr$, $r \in X$, and $f: X \to \mathc$. The following statements are equivalent:
		\begin{quote}
			\begin{description}
			
				\item[(a)] $f$ is continuous at the point $r$.
				\item[(b)] $\starf(r + dx) \approx \starf(r)$ for all infinitesimals $dx$ with $r + dx \in \starx$.
				\item[(c)] $\starf(x) \approx \starf(r)$ for all $x \in \starx$ with $x \approx r$.
			
			\end{description}
		\end{quote}
	\end{theorem}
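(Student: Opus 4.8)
The plan is to follow the proof of Theorem \ref{T: NSL} almost verbatim: establish the equivalence of \textbf{(a)} and \textbf{(b)} directly, then obtain \textbf{(b)} $\Leftrightarrow$ \textbf{(c)} by the substitution $x =: r + dx$. The only structural difference from the limit case is that here $r \in X$ rather than merely a cluster point of $X$, so $dx = 0$ is now an admissible infinitesimal; this causes no trouble, since for $dx = 0$ the conclusion $\starf(r) \approx \starf(r)$ is trivial. Throughout, given an infinitesimal $dx$ with $r + dx \in \starx$, I would fix a representative $dx = \bra \delta_\varep \ket$ and, invoking Lemma \ref{L: Regular Representatives} applied to $X$, arrange that $r + \delta_\varep \in X$ for all $\varep \in \rplus$; together with $dx \approx 0$ this gives $\{\varep \in \rplus : |\delta_\varep| < \tfrac{1}{n} \text{ and } r + \delta_\varep \in X\} \in \scu$ for every $n \in \mathn$.

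For \textbf{(a)} $\Rightarrow$ \textbf{(b)} I would use the countable formulation of continuity at $r$. Fix $m \in \mathn$; there is $n \in \mathn$ such that $x \in X$ with $|x - r| < \tfrac{1}{n}$ implies $|f(x) - f(r)| < \tfrac{1}{m}$. Set $A_n =: \{\varep \in \rplus : |\delta_\varep| < \tfrac{1}{n} \text{ and } r + \delta_\varep \in X\}$, which lies in $\scu$ by the previous paragraph, and observe $A_n \subseteq B_m =: \{\varep \in \rplus : |f(r + \delta_\varep) - f(r)| < \tfrac{1}{m}\}$ by the choice of $n$. Hence $B_m \in \scu$ by \textbf{(c)} of Definition \ref{D: Filter and Free Filter}, i.e.\ $|\starf(r + dx) - \starf(r)| < \tfrac{1}{m}$; since $m$ was arbitrary, $\starf(r + dx) \approx \starf(r)$.

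For \textbf{(b)} $\Rightarrow$ \textbf{(a)} I would argue by contraposition, exactly as in the $(\Leftarrow)$ half of Theorem \ref{T: NSL}. If $f$ is not continuous at $r$, there is $m \in \mathn$ so that $\Xdlt =: \{x \in X : |x - r| < \delta \text{ and } |f(x) - f(r)| \geq \tfrac{1}{m}\}$ is non-empty for every $\delta \in \rplus$; by the Axiom of Choice pick $\xdlt \in \Xdlt$ for each $\delta$ and put $dx =: \bra \xdlt - r \ket$. Then $r + dx \in \starx$ since $\xdlt \in X$ for all $\delta$; and since $|\xdlt - r| < \delta$ and $I_n = (0, \tfrac{1}{n}) \in \scu$, we get $|dx| < \tfrac{1}{n}$ a.e.\ for every $n$, so $dx \in \sci(\starr)$. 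But $|\starf(r + dx) - \starf(r)| \geq \tfrac{1}{m}$ shows $\starf(r + dx) \not\approx \starf(r)$, contradicting \textbf{(b)}. Finally \textbf{(b)} $\Leftrightarrow$ \textbf{(c)} is immediate on writing $x =: r + dx$, since $x \in \starx \Leftrightarrow r + dx \in \starx$ and $x \approx r \Leftrightarrow dx \in \sci(\starr)$.

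I do not expect a genuine obstacle here — the argument is a routine transcription of the limit proof. The one place to stay alert is the bookkeeping in \textbf{(b)} $\Rightarrow$ \textbf{(a)}: one must let $\delta$ range over all of $\rplus$ when choosing $\xdlt \in \Xdlt$, precisely so that $|\xdlt - r| < \delta$ forces the net $\bra \xdlt - r \ket$ into every $I_n$ and hence makes it genuinely infinitesimal.
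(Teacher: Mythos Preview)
Your proof is correct, but it re-derives from scratch what the paper obtains in two lines by citing Theorem~\ref{T: NSL}. The paper simply observes that $r \in X$ gives $\starf(r) = f(r)$, sets $L = f(r)$, and invokes the equivalences already established for limits; the passage \textbf{(b)} $\Leftrightarrow$ \textbf{(c)} via $x =: r + dx$ is handled exactly as you do. Your route is a faithful re-execution of the limit argument with the harmless adjustment that $dx = 0$ is now admitted, so nothing is wrong --- it is just less economical. One small point in your favour: the paper's appeal to Theorem~\ref{T: NSL} tacitly presumes $r$ is a cluster point of $X$, whereas your direct ultrafilter argument covers the isolated-point case without needing to single it out.
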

	\begin{proof}
		Since $r \in X$ by assumption, we have $\starf(r) = f(r)$. The equivalence of \textbf{(a)} and \textbf{(b)} follows by letting $f(r) = L$ and applying Theorem \ref{T: NSL}. The equivalence of \textbf{(b)} and \textbf{(c)} is immediate by letting $x =: r + dx$. 
	\end{proof}
		
	\begin{corollary}\index{Continuity!on a Set}
		Let $X \subseteq \mathr$ and $f: X \to \mathc$. Then $f$ is continuous on the set $X$ \iff $\starf(x) \approx f(r)$ for all $r \in X$ and $x \in \starx$ such that $x \approx r$.
	\end{corollary}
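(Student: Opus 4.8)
The plan is to reduce this statement to the pointwise continuity characterization of Theorem~\ref{T: NSC} by simply quantifying over the base point $r \in X$. The only bookkeeping remark needed at the outset is that since $r \in X$ we have $\starf(r) = f(r)$ by part~\textbf{(a)} of Theorem~\ref{T: Properties of f-star} (the restriction $\starf\!\mid_X = f$), so the conditions ``$\starf(x) \approx f(r)$'' appearing in the corollary and ``$\starf(x) \approx \starf(r)$'' appearing in Theorem~\ref{T: NSC}\textbf{(c)} are literally the same.

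For the forward direction, I would assume $f$ is continuous on $X$, which by definition means $f$ is continuous at every point $r \in X$. Fix an arbitrary $r \in X$. Applying the equivalence \textbf{(a)}$\Leftrightarrow$\textbf{(c)} of Theorem~\ref{T: NSC} at the point $r$, continuity of $f$ at $r$ yields $\starf(x) \approx \starf(r) = f(r)$ for all $x \in \starx$ with $x \approx r$. Since $r \in X$ was arbitrary, this holds for all $r \in X$, which is exactly the right-hand side of the asserted equivalence.

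For the converse, I would assume $\starf(x) \approx f(r)$ for all $r \in X$ and all $x \in \starx$ with $x \approx r$. Fix an arbitrary $r \in X$. Restricting the hypothesis to this $r$ and rewriting $f(r) = \starf(r)$, we obtain $\starf(x) \approx \starf(r)$ for all $x \in \starx$ with $x \approx r$; this is precisely condition \textbf{(c)} of Theorem~\ref{T: NSC} at $r$, so that theorem gives continuity of $f$ at $r$. As $r \in X$ was arbitrary, $f$ is continuous at every point of $X$, i.e.\ continuous on $X$.

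I do not anticipate any genuine obstacle here: the content is entirely carried by Theorem~\ref{T: NSC}, and the corollary is the routine ``uniformize the base point'' step. The only thing to be careful about is the identification $\starf(r) = f(r)$ for $r \in X$, which is exactly why the hypothesis can be phrased with $f(r)$ rather than $\starf(r)$; everything else is a direct quotation of the pointwise result.
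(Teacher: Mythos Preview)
Your proposal is correct and is exactly the approach the paper intends: the corollary is stated without proof immediately after Theorem~\ref{T: NSC}, so the paper treats it as the obvious ``quantify over $r \in X$'' consequence of that theorem, together with the identification $\starf(r)=f(r)$ for $r\in X$. You have simply written out in full what the paper leaves implicit.
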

	
	\begin{remark}[Reduction of Quantifiers]\index{Reduction of Quantifiers}
		We formalize both standard and non-standard characterizations of continuity with our focus on comparing the quantifiers. Observe that $f: X \to \mathr$ is continuous on the set $X \subset \mathr$ if:
		\begin{equation}\label{E: CSET}
			(\forall r \in X)(\forall \varep \in \rplus)(\exists \delta \in \rplus)(\forall x \in X)[ |x - r| < \delta \Rightarrow |f(x) - f(r)| < \varep].
		\end{equation}
		Observe the non-standard characterization:
		\begin{equation}\label{E: NSCSET}
				(\forall r \in X)(\forall x \in \starx)[x \approx r \Rightarrow \starf(x) \approx f(r)].
			\end{equation}
			
		Again we notice that while the former has four non-commuting quantifiers, the latter has two \emph{commuting} quantifiers.
	\end{remark}
	
	We again turn to some examples to demonstrate our characterization.
	
	\begin{examples}\label{E: Cont}$ $
		\begin{quote}
		\begin{description}
		
			\item[(i)] The function $f: \mathr \to \mathr$ defined by $f(x) = x$ is continuous on $\mathr$. To show this, let $r \in \mathr$ be fixed and arbitrary. Suppose $x \in \starr$ such that $x \approx r$. Then $\starf(x) = x \approx r = \starf(r)$, and by part \textbf{(c)} of Theorem \ref{T: NSC} $f(x) = x$ is continuous at $r \in \rplus$. Since $r$ was arbitrary, $f(x) = x$ is continuous on $\mathr$.
			\item[(ii)] The function $f: \rplus \to \mathr$ defined by $f(x) = \frac{1}{x}$ is continuous on $\rplus$. Let $r \in \mathr$ be fixed and arbitrary. Let $dx$ be a non-zero infinitesimal so that $r + dx \in \starr_+$. Then $\st[\starf(r + dx) - \starf(r)] = \st[\frac{1}{r + dx} - \frac{1}{r}] = \st[\frac{dx}{r^2 + rdx}] = \frac{\st[dx]}{\st[r^2 + rdx]} = \frac{0}{r^2} = 0$. Therefore, $\starf(r + dx) \approx \starf(r)$ so by part \textbf{(b)} of Theorem \ref{T: NSC} $f(x) = \frac{1}{x}$ is continuous at $r \in \rplus$. As $r$ was arbitrary, $f(x) = \frac{1}{x}$ is continuous on $\rplus$.
		
		\end{description}
		\end{quote}
	\end{examples}
	
\section{Uniform Continuity}\label{S: Uniform Continuity}

	As was shown, the continuity of a function (both at a point and on a set) followed directly by substitution from the work done on limits. Unfortunately we are not so lucky with respect to \textbf{uniform continuity}. As with ordinary continuity, we recall the formalized standard definition of uniform continuity. The function $f: X \to \mathc$ is uniformly continuous on $X \subseteq \mathr$ if, by definition, for all $\varep \in \rplus$ there exists $\delta \in \rplus$ such that for all $x, r \in X$, $|x - r|< \delta$ implies $|f(x) - f(r)| < \varep$.
		
	\begin{theorem}[Uniform Continuity]\label{T: NSUC}\index{Uniform Continuity}
		Let $X \subseteq \mathr$. The function $f: X \to \mathc$ is uniformly continuous on $X$ \iff $\starf(x) \approx \starf(r)$ for all $x, r \in \starx$ such that $x \approx r$.
	\end{theorem}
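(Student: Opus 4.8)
The plan is to prove the two implications separately, mirroring the structure of the proof of Theorem~\ref{T: NSC} but exploiting the fact that here \emph{both} arguments of $f$ are allowed to range over $\starx$ (this is exactly what separates uniform from pointwise continuity).

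For the forward direction ($\Rightarrow$), I would argue by Transfer. Uniform continuity of $f$ on $X$ is the proposition
\[
(\forall \varep \in \rplus)(\exists \delta \in \rplus)(\forall x \in X)(\forall r \in X)[\,|x-r| < \delta \Rightarrow |f(x)-f(r)| < \varep\,],
\]
so by the Transfer Principle the same statement holds with $X$ replaced by $\starx$, $f$ by $\starf$, and $\rplus$ by $\starr_+$. Now fix a standard $\varep \in \rplus$ and let $\delta \in \rplus$ be the corresponding standard radius. If $x, r \in \starx$ satisfy $x \approx r$, then $|x-r|$ is infinitesimal, hence $|x-r| < \delta$, and the transferred statement gives $|\starf(x) - \starf(r)| < \varep$. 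Since $\varep \in \rplus$ was arbitrary, $\starf(x) - \starf(r) \in \sci(\starr)$, i.e.\ $\starf(x) \approx \starf(r)$. (One may instead run the direct ultrafilter computation from the countable reformulation together with Lemma~\ref{L: Regular Representatives}, as in the ``$\Rightarrow$'' half of Theorem~\ref{T: NSL}.)

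For the converse ($\Leftarrow$), I would argue by contraposition. Suppose $f$ is \emph{not} uniformly continuous; then there is a fixed $\epn \in \rplus$ such that for every $\delta \in \rplus$ the set $\{(x,r) \in X \times X : |x-r| < \delta,\ |f(x)-f(r)| \ge \epn\}$ is non-empty. Taking $\rplus$ itself as the index set and invoking the Axiom of Choice, for each $\varep \in \rplus$ choose a pair $\xep, \rep \in X$ with $|\xep - \rep| < \varep$ and $|f(\xep)-f(\rep)| \ge \epn$. Put $x =: \bra \xep \ket$ and $r =: \bra \rep \ket$ in $\starx$. Then $|x - r| < \bra \varep \ket = \rho$, so $x \approx r$ by convexity of $\sci(\starr)$ (Lemma~\ref{L: Maximal Convex}); but $\{\varep \in \rplus : |f(\xep)-f(\rep)| \ge \epn\} = \rplus \in \scu$, so $|\starf(x) - \starf(r)| \ge \epn \not\approx 0$. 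This contradicts the hypothesis, and the proof is complete.

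The step I expect to be the crux is the construction in the converse: one must see that letting the gap $|\xep - \rep|$ shrink with the index $\varep$ forces the two resulting points $x, r \in \starx$ to be infinitely close while $\starf$ keeps their images a standard distance $\epn$ apart. This is precisely the phenomenon distinguishing uniform continuity from ordinary continuity — in Theorem~\ref{T: NSC} the centre $r$ is standard, so $r + dx \approx r$ holds automatically, whereas here the ``bad'' point $r$ is genuinely non-standard, which is why a naive transfer of pointwise continuity does not suffice. The forward direction, by contrast, is essentially immediate from the Transfer Principle. It is also worth recording in a subsequent remark that the standard formulation has four (non-commuting) quantifiers while the non-standard characterization has only two, and these commute.
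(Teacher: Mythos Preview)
Your proposal is correct, and the converse ($\Leftarrow$) direction matches the paper's proof essentially verbatim: contrapose, fix $\epn$, use the Axiom of Choice to select a net of bad pairs $(\xep,\rep)$ indexed by $\varep\in\rplus$, and pass to $x=\bra\xep\ket$, $r=\bra\rep\ket$ in $\starx$ with $|x-r|<\rho$ but $|\starf(x)-\starf(r)|\ge\epn$.

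The forward direction differs in method. The paper works directly with the ultrafilter and the countable reformulation: for fixed $m\in\mathn$ it picks the $n$ from uniform continuity, observes that $A_n=\{\varep:|\xep-\rep|<\tfrac{1}{n}\}\in\scu$, and uses the implication pointwise in $\varep$ to conclude $B_m=\{\varep:|f(\xep)-f(\rep)|<\tfrac{1}{m}\}\in\scu$. You instead invoke the Transfer Principle, which is cleaner and higher-level; the paper's route is more in the constructive spirit of the earlier chapters and does not rely on the Transfer machinery of Section~\ref{S: Logic for Transfer}. One small wording issue in your version: as written you transfer the \emph{entire} $\forall\varep\,\exists\delta$ statement first and then claim $\delta\in\rplus$, but after full transfer the witness $\delta$ lives only in $\starr_+$. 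The clean way is to fix the standard $\varep$, obtain the standard $\delta$ from the \emph{untransferred} hypothesis, and then transfer only the inner proposition $(\forall x,r\in X)[\,|x-r|<\delta\Rightarrow|f(x)-f(r)|<\varep\,]$ with $\varep,\delta$ as constants. With that adjustment the argument is complete, and your parenthetical remark already points to the paper's alternative.
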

	\begin{proof}$ $
		\begin{description}
		
			\item[($\Rightarrow$)] Suppose $f$ is uniformly continuous on $X \subseteq \mathr$. Let $m \in \mathn$, then there exists $n \in \mathn$ such that for all $u, v \in X$, $|u - v| < \frac{1}{n}$ implies $|f(u) - f(v)| < \frac{1}{m}$. Suppose $x = \bra \xep \ket$ and $r = \bra \rep \ket$ are in $\starx$ such that $x \approx r$. We must show that $\starf(x) \approx \starf(r)$. Indeed, $A_n =: \{ \varep \in \rplus : |\xep - \rep| < \frac{1}{n}\} \in \scu$. As $f$ is uniformly continuous, $A_n \subset B_m =: \{ \varep \in \rplus : |f(\xep) - f(\rep)| < \frac{1}{m}\}$ and so $B_m \in \scu$. Therefore, $|\starf(x) - \starf(r)| < \frac{1}{m}$ for all $m \in \mathn$ which implies $\starf(x) \approx \starf(r)$.
			\item[($\Leftarrow$)] Assume for all $x, r \in \starx$, $x \approx r$ implies $\starf(x) \approx \starf(r)$. Suppose to the contrary that $f$ is not uniformly continuous on $X$. That is, there exists $\epn \in \rplus$ such that for all $\varep \in \rplus$ there exists $u, v \in X$ such that $|u - v| < \varep$ and $|f(u) - f(v)| \geq \epn$. For each $\varep \in \rplus$ the set $\Aep =: \{ (u,v) \in X \times X : |u - v| < \varep$ and $|f(u) - f(v)| \geq \epn\}$ is non-empty. By the Axiom of Choice there exist nets $(\xep)$ and $(\rep)$ such that $(\xep, \rep) \in \Aep$ for all $\varep \in \rplus$. Therefore, $x =: \bra \xep \ket$ and $r =: \bra \rep \ket$ are each in $\starx$. So we have $|x - r| < \bra \varep \ket$, giving $x \approx r$ (since $\bra \varep \ket \approx 0$) and $|\starf(x) - \starf(r)| \geq \epn$. Hence, $\starf(x) \not\approx \starf(r)$, a contradiction. 
		
		\end{description}
	\end{proof}
			
	\begin{remark}[Reduction of Quantifiers]\index{Reduction of Quantifiers}
		Once again, we formalize both standard and non-standard characterizations of uniform continuity in order to compare the quantifiers. Recall that $f: X \to \mathr$ is uniformly continuous on $X \subseteq \mathr$ if:
		\begin{equation}\label{E: UC}
			(\forall \varep \in \rplus)(\exists \delta \in \rplus)(\forall x,r \in X)[ |x - r| < \delta \Rightarrow |f(x) - f(r)| < \varep].
		\end{equation}
		Likewise, the non-standard characterization from Theorem \ref{T: NSUC} states:
		\begin{equation}\label{E: NSUC}
			(\forall r, x \in \starx)[x \approx r \Rightarrow \starf(x) \approx \starf(r)].
		\end{equation}
		
		Yet again, the former has four non-commuting quantifiers while the latter has has two \emph{commuting} quantifiers. 
		
		Notice that in the non-standard characterization of continuity the points of continuity $r$ were required to be in $\mathr$, whereas in uniform continuity they are allowed to be in the non-standard extension $\starr$.
	\end{remark}

	\begin{examples}$ $
		\begin{quote}
		\begin{description}

			\item[(i)]Let $f: \rplus \to \mathr$ be defined by $f(x) = \frac{1}{x}$. By part \textbf{(ii)} of Example \ref{E: Cont} $f(x)$ is continuous on $\rplus$. However, it is \emph{not} uniformly continuous on $\rplus$. Indeed, let $r = \rho$ and $x = \rho^2$ where $\rho = \bra \varep \ket$ as in Example \ref{E: Canonical Infinitesimal}. Certainly they are both in $\starr_+$ so they are in the domain of $\starf(x) = \frac{1}{x}$. Furthermore, $\rho \approx \rho^2$ but $| \starf(\rho) - \starf(\rho^2) | = |\frac{1}{\rho} - \frac{1}{\rho^2} | = \frac{1}{\rho^2} - \frac{1}{\rho} = \frac{1- \rho}{\rho^2}$ is not infinitesimal since $1 - \rho$ is finite but not infinitesimal and $\frac{1}{\rho^2}$ is infinitely large.
			\item[(ii)] Let $f: \mathr \to \mathr$ be defined by $f(x) = \sin x$. Then $f(x)$ is uniformly continuous on $\mathr$. Indeed, let $x, r \in \starr$ such that $x \approx r$. We must show that $\star\sin x \approx \star\sin r$. We have $| \star\sin x - \star\sin r | = 2 \star\sin \frac{x - r}{2} \star\cos \frac{x + r}{2}$ using trigonometric identities. As $\frac{x-r}{2} \approx 0$, by part \textbf{(iii)} of Example \ref{E: Lim} we have $\star\sin \frac{x-r}{2} \approx 0$. Also, we have $\star\cos \frac{x+r}{2}$ is a finite number since $\star\cos x$ is bounded (Examples \ref{E: NSF} part \textbf{(iii)}), which gives that $| \star\sin x - \star\sin r | \approx 0$.
			\item[(iii)] Let $f: \mathr \to \mathr$ be defined by $f(x) = e^x$. It is well known that $f(x)$ is continuous on $\mathr$, however, it is not uniformly so. Indeed, select $\frac{1}{\rho}$ and $\frac{1}{\rho} + \rho$ in $\scl(\starr)$. We have $\frac{1}{\rho} \approx \frac{1}{\rho} + \rho$ but $\starf\left(\frac{1}{\rho} + \rho\right) - \starf\left(\frac{1}{\rho}\right) = e^{\frac{1}{\rho}}\left(e^\rho - 1\right) > e^{\frac{1}{\rho}} \frac{\rho}{2} \not\approx 0$. The inequality is justified via the asymptotic expansion of the Taylor Series for $e^\rho - 1$. Also, $e^{\frac{1}{\rho}} \frac{\rho}{2} \not\approx 0$ is actually infinitely large since $$e^{\frac{1}{\rho}} \frac{\rho}{2} \approx \st\left[e^{\frac{1}{\rho}} \frac{\rho}{2} \right] = \frac{1}{2} \st \left[ e^{\frac{1}{\rho}} \rho \right] = \frac{1}{2} \lim_{x \to 0_+} e^{\frac{1}{x}}x = \frac{1}{2} \lim_{x \to \infty} \frac{e^x}{x} = \frac{1}{2} \lim_{x \to \infty} \frac{e^x}{1} = \infty,$$ where the penultimate step is accomplished by l'Hospital's rule.
			
		\end{description}
		\end{quote}
	\end{examples}
	
	\begin{theorem}
		Let $X \subseteq \mathr$ be a compact set and $f: X \to \mathc$. If $f$ is continuous on $X$, then $f$ is uniformly continuous on $X$.
	\end{theorem}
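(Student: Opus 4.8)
The plan is to derive this from the three non-standard characterizations already at hand: uniform continuity (Theorem \ref{T: NSUC}), continuity at a point (Theorem \ref{T: NSC}), and compactness (Theorem \ref{T: NSCoS}). By Theorem \ref{T: NSUC} it suffices to show that $\starf(x) \approx \starf(r)$ for every pair $x, r \in \starx$ with $x \approx r$. So I would fix such a pair and try to produce a single \emph{standard} point of $X$ to which both $x$ and $r$ are infinitely close.

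First I would cash in compactness. By the equivalence of \textbf{(a)} and \textbf{(b)} in Theorem \ref{T: NSCoS}, $S$ compact gives $\starx \subseteq \mu(X)$, so there is a standard $s \in X$ with $x \approx s$. Since $x \approx r$ and $\approx$ is transitive, also $r \approx s$.

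Next I would apply continuity of $f$ at the standard point $s$. By the equivalence of \textbf{(a)} and \textbf{(c)} in Theorem \ref{T: NSC}, from $x \in \starx$ and $x \approx s$ we get $\starf(x) \approx \starf(s)$, and similarly $\starf(r) \approx \starf(s)$; here $\starf(s) = f(s)$ because $s \in X$ and $\starf$ extends $f$ (Theorem \ref{T: Properties of f-star}, part \textbf{(a)}). Transitivity of $\approx$ then yields $\starf(x) \approx \starf(r)$, which is precisely the non-standard characterization of uniform continuity in Theorem \ref{T: NSUC}.

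I do not expect a real obstacle; the one point needing care is that the common point $s$ must be standard, so that continuity \emph{at $s$} and the identity $\starf(s)=f(s)$ both apply, and this is exactly what compactness supplies through $\starx \subseteq \mu(X)$. By contrast, mere continuity only controls $\starf$ near standard points and would fail for $x,r$ clustered around a point of $\starx$ with no standard shadow in $X$ — the situation ruled out by compactness.
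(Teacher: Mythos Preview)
Your proposal is correct and is essentially the paper's own argument: use $\starx \subseteq \mu(X)$ from compactness to find a standard $s\in X$ with $x\approx s\approx r$, then invoke continuity at $s$ and transitivity of $\approx$ to conclude $\starf(x)\approx f(s)\approx\starf(r)$. The paper phrases the first step via $\st(\starx)=X$ and sets $c=\st(x)=\st(y)$, but this is the same idea; your exposition is arguably cleaner (aside from the slip ``$S$ compact'' where you mean ``$X$ compact'').
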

	\begin{proof}
		As $X$ is compact we have $\starx \subseteq \mu(X)$ by Theorem \ref{T: NSCoS}. Since $\st(\mu(X)) = X$ and $X \subseteq \st(\starx)$ trivially we have $\st(\starx) = X$. Suppose $x, y \in \starx$ such that $x \approx y$. Then $c =: \st(x) = \st(y) \in X$. Therefore $\starf(x) \approx f(c) \approx \starf(y)$ since $f$ assumed continuous at $c$. Therefore $f$ is uniformly continuous on $X$. 
	\end{proof}

\section{Derivatives}\label{S: Derivatives}
	Let $X \subseteq \mathr$ and $c \in \mathr$ be a cluster point of $X$. Recall that a function $f: X \to \mathc$ is \emph{differentiable at $c$} with derivative $L \in \mathc$ if, by definition, for all $\varep \in \rplus$ there exists $\delta \in \rplus$ such that if $x \in \mathr$ is such that $0 < | x - c | < \delta$, then $$\left | \frac{f(x) - f(c)}{x - c} - L \right | < \varep.$$
	
	\begin{theorem}[Characterization of Derivatives]\label{T: NSD}\index{Derivative}
		Let $X \subseteq \mathr$, and consider the function $f: X \to \mathc$. The following are equivalent:
		\begin{quote}
			\begin{description}
			
				\item[(a)] The function $f$ is differentiable at $c \in \mathr$
				\item[(b)] There is a number $L \in \mathc$ such that for all $x \approx c$, $x \not= c$ we have: $$\frac{\starf(x) - \starf(c)}{x - c} \approx L.$$ Furthermore, when $L$ exists we have $f'(c) = L$.
				\item[(c)] $\st \left ( \frac{\starf(c + dx) - \starf(c)}{dx} \right ) = L$ for all non-zero infinitesimals $dx$ such that $c + dx \in \starx$.
	
			\end{description}
		\end{quote}
	\end{theorem}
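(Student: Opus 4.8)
The plan is to observe that the derivative is just a limit and then invoke Theorem \ref{T: NSL} essentially verbatim. Since $c$ is a cluster point of $X$ and the standard definition of cluster point already uses the \emph{deleted} ball $B_0(c,\varep)$, $c$ is also a cluster point of $X\setminus\{c\}$, so we may introduce the difference-quotient function $g:X\setminus\{c\}\to\mathc$, $g(x)=\frac{f(x)-f(c)}{x-c}$. By the definition of differentiability recalled just above the theorem, $f$ is differentiable at $c$ with $f'(c)=L$ precisely when $\lim_{x\to c}g(x)=L$. The whole statement then follows by applying Theorem \ref{T: NSL} to $g$ (in the role of $f$) at the point $r=c$.

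The only step that is not purely formal is checking that the non-standard extension $\starg$ of $g$ coincides with the difference quotient built from $\starf$, namely $\starg(x)=\frac{\starf(x)-\starf(c)}{x-c}$ for every $x\in\starx$ with $x\neq c$. This is immediate from the ultrapower description: taking a regular representative (Lemma \ref{L: Regular Representatives}) $x=\bra\xep\ket$ with $\xep\in X\setminus\{c\}$ for all $\varep$, Definition \ref{D: NSE of Function} gives $\starg(\bra\xep\ket)=\bra g(\xep)\ket=\bra\frac{f(\xep)-f(c)}{\xep-c}\ket$, and because addition, subtraction and division in $\starc$ are performed componentwise this equals $\frac{\bra f(\xep)\ket-\bra f(c)\ket}{\bra\xep\ket-\bra c\ket}=\frac{\starf(x)-\starf(c)}{x-c}$. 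One also needs $\dom(\starg)={\star(X\setminus\{c\})}=\starx\setminus\{c\}$, which is part \textbf{(d)} of Theorem \ref{T: Boolean}; in particular the hypothesis ``$c+dx\in\starx$ and $dx\neq 0$'' appearing in \textbf{(c)} is exactly ``$c+dx\in\dom(\starg)$'', so Theorem \ref{T: NSL} applies with no loss.

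With these identifications the equivalences read off directly. The equivalence of \textbf{(a)} and \textbf{(b)} is the equivalence of parts \textbf{(a)} and \textbf{(c)} of Theorem \ref{T: NSL} for $g$: $\lim_{x\to c}g(x)=L$ iff $\starg(x)\approx L$ for all $x\in\starx\setminus\{c\}$ with $x\approx c$, which after substituting the formula for $\starg$ is exactly \textbf{(b)}; the extra clause $f'(c)=L$ is just the definition of the derivative together with uniqueness of the limit. The equivalence of \textbf{(b)} and \textbf{(c)} is the passage between parts \textbf{(c)} and \textbf{(d)} of Theorem \ref{T: NSL}, writing $x=c+dx$ and using Theorem \ref{T: Representation} (legitimate since $L\in\mathc$) to replace $\frac{\starf(c+dx)-\starf(c)}{dx}\approx L$ by $\st\!\left(\frac{\starf(c+dx)-\starf(c)}{dx}\right)=L$. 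I do not anticipate a real obstacle: all the analytic content sits inside Theorem \ref{T: NSL}, and the only place to be careful is the bookkeeping with the domain $X\setminus\{c\}$ versus $X$ — i.e. verifying that every non-zero infinitesimal $dx$ with $c+dx\in\starx$ does land in $\dom(\starg)$ — which is the Boolean identity noted above.
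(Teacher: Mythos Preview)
Your proposal is correct and takes essentially the same approach as the paper: both reduce the statement to Theorem \ref{T: NSL} applied to the difference-quotient function, and both handle \textbf{(b)}$\Leftrightarrow$\textbf{(c)} by citing the \textbf{(c)}$\Leftrightarrow$\textbf{(d)} step of Theorem \ref{T: NSL}. The only difference is that you are more explicit about the bookkeeping the paper leaves implicit --- naming $g$, verifying $\starg(x)=\frac{\starf(x)-\starf(c)}{x-c}$ via the ultrapower description, and invoking Theorem \ref{T: Boolean}\textbf{(d)} for $\star(X\setminus\{c\})=\starx\setminus\{c\}$ --- which is all to the good.
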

	\begin{proof} We prove the equivalence of \textbf{(a)} and \textbf{(b)}.
	\begin{description}
		
		\item[($\Rightarrow$)] Suppose $f : X \to \mathc$ is differentiable at $c \in \mathr$ with derivative $L = f'(c) \in \mathc$. Then $\lim_{x \to c} \frac{f(x) - f(c)}{x - c} = L.$ By part \textbf{(c)} of Theorem \ref{T: NSL} we have $\frac{\starf(x) - \starf(c)}{ x - c } \approx L = f'(c)$ for all $x \in \starr$ such that $x \approx c$.
		\item[($\Leftarrow$)] Suppose there exists $L \in \mathc$ such that $\frac{\starf(x) - \starf(c)}{x-c} \approx L = f'(c)$ for all $x \approx c$, $x \not= c$. By Theorem \ref{T: NSL} we have that $\lim_{x \to c}\frac{f(x) - f(c)}{x -c} = L = f'(c)$.
		
	\end{description}
	
	As for the equivalence of \textbf{(b)} and \textbf{(c)} we refer to the equivalence of \textbf{(b)} and \textbf{(d)} from Theorem \ref{T: NSL}. 
	\end{proof}
	
	\begin{corollary}
		Let $X \subseteq \mathr$ and consider $f: X \to \mathc$. Then $f'(x) = \st \left ( \frac{ f(x + dx) - f(x) } {dx} \right )$ for all $dx \approx 0$, $dx \not= 0$ such that $x + dx \in \starx$.
	\end{corollary}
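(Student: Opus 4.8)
The plan is to obtain this as an immediate restatement of part \textbf{(c)} of Theorem \ref{T: NSD}. Fix a point $x \in X$ at which $f$ is differentiable (so in particular $x$ is a cluster point of $X$), and apply Theorem \ref{T: NSD} with the distinguished point $c$ taken to be $x$ and with $L = f'(x)$. Since $f$ is differentiable at $x$, statement \textbf{(a)} of that theorem holds; by the equivalence of \textbf{(a)} and \textbf{(c)} proved there, statement \textbf{(c)} holds as well, which says precisely that $\st\!\left(\frac{\starf(x + dx) - \starf(x)}{dx}\right) = L = f'(x)$ for every non-zero infinitesimal $dx$ with $x + dx \in \starx$.

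It then remains only to reconcile notation. The corollary suppresses the asterisk on $\starf$, in accordance with the standing convention introduced in the Remark following Definition \ref{D: NSE of Function}; thus $f(x+dx)$ is to be read as $\starf(x+dx)$. Furthermore, since $x \in X \subseteq \starx$, part \textbf{(a)} of Theorem \ref{T: Properties of f-star} gives $\starf(x) = f(x)$, so the occurrence of $f(x)$ in the difference quotient is the genuine standard value. Making these substitutions in the displayed identity yields $f'(x) = \st\!\left(\frac{f(x+dx) - f(x)}{dx}\right)$ for all $dx \approx 0$, $dx \ne 0$ with $x + dx \in \starx$, as claimed.

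I do not anticipate any real obstacle: the entire mathematical content is already contained in Theorem \ref{T: NSD}, and the corollary merely renames the fixed point $c$ to the running variable $x$ and drops the asterisks per convention. The one point I would flag explicitly at the outset — rather than prove — is the implicit hypothesis that $x$ be a point at which $f'(x)$ exists, since the conclusion presupposes that the left-hand side is meaningful.
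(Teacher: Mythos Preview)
Your proposal is correct and matches the paper's treatment: the corollary is stated there without proof precisely because it is an immediate specialization of part \textbf{(c)} of Theorem \ref{T: NSD} with $c$ renamed to $x$. Your explicit flagging of the implicit differentiability hypothesis and the asterisk conventions is a welcome clarification.
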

	
	\begin{remark}[Reduction of Quantifiers]\index{Reduction of Quantifiers}
		Recall for $f: X \to \mathc$, $L \in \mathc$, and $c \in \mathr$ a cluster point of $X$, $L$ is the derivative of $f(x)$ at $c$ if:
		\begin{equation}
			(\forall \varep \in \rplus)(\exists \delta \in \rplus)(\forall x \in X)\left [0 < |x - c| < \delta \Rightarrow \left |\frac{f(x)-f(c)}{x-c} - L \right| < \varep \right ]
		\end{equation}
		The formalization from Theorem \ref{T: NSD} is:
		\begin{equation}
			(\forall x \in \starx)\left [ x \approx c \Rightarrow \frac{\starf(x) - \starf(c)}{x - c} \approx L \right ]
		\end{equation}
		
		As with limits and continuity before, we see that the non-standard characterization has two fewer quantifiers and that those quantifiers commute (though in this case trivially).
	\end{remark}
	
	\begin{examples}[Computations]$ $
	\begin{quote}
		\begin{description}
		
			\item{(i)} Let $f(x) = x^3$. Let $dx \approx 0$, $dx \not= 0$ such that $x + dx \in \starx$ and apply the above corollary so that:
				\begin{align}
					f'(x) &= \st \left ( \frac{(x + dx)^3 - x^3}{dx} \right )\notag\\ 
						&= \st \left ( \frac{ x^3 + 3x^2 dx + 3x dx^2 + dx^3 - x^3}{dx} \right )\notag\\
						&= \st \left ( \frac{dx ( 3x^2 + 3xdx + dx^2)}{dx} \right )\notag\\
						&= \st(3x^2 + 3xdx + dx^2) = 3x^2.\notag
				\end{align}
			\item[(ii)] (Power Rule) Let $f(x) = x^n$. Let $dx \approx 0$, $dx \not= 0$ such that $x + dx \in \starx$ and apply the above corollary so that:
		\begin{align}
			f'(x) &= \st \left ( \frac{(x+dx)^n - x^n}{dx} \right )\notag\\
				&= \st \left ( \frac{\sum_{i=0}^{n} {n \choose i} x^{n-i} dx^{i} - x^n}{dx} \right )\notag\\
				&= \st \left ( \frac{\sum_{i=1}^{n} {n \choose i} x^{n-i} dx^{i}}{dx} \right )\notag\\
				&= \st \left ( \frac{ dx ( \sum_{i=1}^{n} {n \choose i} x^{n-i} dx^{i-1} )}{dx} \right )\notag\\
				&= \st \left ( \sum_{i=1}^{n} {n \choose i} x^{n-i} dx^{i-1} \right )\notag\\
				&= \st \left ( {n \choose 1} x^{n-1} + \sum_{i = 2}^{n} {n \choose i}x^i dx^{i - 1} \right )\notag\\
				&= \st (n x^{n - 1} ) + \st \left ( \sum_{i = 1}^{n} {n \choose i} x^{n- i} dx^{i - 1} \right ) = n x^{n - 1}.\notag
		\end{align}
			\item[(iii)] Let $f(x) = \sin(x)$. Let $dx \approx 0$, $dx \not= 0$ such that $x + dx \in \starx$ and apply the above corollary so that:
				\begin{align}
					f'(x) &= \st \left ( \frac{ \sin(x + dx) - \sin(x)}{dx} \right )\notag\\
						&= \st \left ( \frac{\sin(x)\cos(dx) + \cos(x)\sin(dx) - \sin(x)}{dx} \right )\notag\\
						&= \st \left ( \sin(x)\frac{(\cos(dx) - 1)}{dx} + \cos(x)\frac{\sin(dx)}{dx} \right )\notag
				\end{align}
				Now, $\st \left ( \frac{\sin(dx)}{dx} \right ) = \lim_{x \to 0} \frac{\sin(x)}{x} = 1$ and $\st \left ( \frac{\cos(dx) - 1}{dx} \right ) = \lim_{x \to 0} \frac{ \cos(x) - 1}{x} = 0$, both by l'Hospital's rule. Continuing the calculations on $f'(x)$ we have: $$\st \left ( \sin(x)\frac{(\cos(dx) - 1)}{dx} + \cos(x)\frac{\sin(dx)}{dx} \right ) = 0 + \cos(x) = \cos(x).$$
			
		\end{description}
	\end{quote}
	\end{examples}
	
	We now demonstrate the ease with which this characterization allows us to prove the usually burdensome chain rule.
	
	\begin{corollary}[Chain Rule]\label{C: Chain}
		Let $X \subseteq \mathr$ and $Y \subseteq g[X]$. If $g: X \to \mathc$ is differentiable at $c \in \mathr$ and $f: Y \to \mathc$ is differentiable at $g(c)$, then $f \circ g$ is differentiable at $c$ and $(f \circ g)'(c) = f'(g(c))g'(c)$.
	\end{corollary}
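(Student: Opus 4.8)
The plan is to run the whole argument through the infinitesimal characterization of the derivative, namely parts \textbf{(b)}/\textbf{(c)} of Theorem \ref{T: NSD}, so that the Chain Rule reduces to an algebraic manipulation of a single difference quotient together with one case distinction. Fix a non-zero infinitesimal $dx$ with $c + dx \in \starx$, and set $dy =: \starg(c + dx) - \starg(c)$. The first point is that $dy \approx 0$: since $g$ is differentiable at $c$, the quotient $\frac{\starg(c+dx) - \starg(c)}{dx}$ is infinitely close to the standard (hence finite) number $g'(c)$, so it is finite, and multiplying a finite number by the infinitesimal $dx$ lands in $\sci(\starc)$ because $\sci(\starc)$ is an ideal of $\scf(\starc)$ (Lemma \ref{L: Maximal Convex}). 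Also $\starg(c+dx) = g(c) + dy$ is a value of $\starg$, hence lies in $\star(\ran g)$ by Theorem \ref{T: Properties of f-star}; since $f\circ g$ is defined, this puts $g(c)+dy$ in ${^{*}Y}$, so $\starf$ may legitimately be applied to it.

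Next I split into two cases according to whether $dy = 0$. If $dy \neq 0$, then because $dy \approx 0$ and $g(c)+dy \approx g(c)$ with $g(c)+dy \neq g(c)$, part \textbf{(b)} of Theorem \ref{T: NSD} applied to $f$ at $g(c)$ gives $\frac{\starf(g(c)+dy) - \starf(g(c))}{dy} \approx f'(g(c))$, while part \textbf{(b)} applied to $g$ at $c$ gives $\frac{dy}{dx} \approx g'(c)$. Writing
\[ \frac{\starf(\starg(c+dx)) - \starf(\starg(c))}{dx} = \frac{\starf(g(c)+dy) - \starf(g(c))}{dy}\cdot\frac{dy}{dx}, \]
both factors on the right are finite (each is infinitely close to a standard number), so by the corollary in Section \ref{S: Trichotomy} that $\approx$ preserves multiplication on $\scf(\starc)$, the product is infinitely close to $f'(g(c))\,g'(c)$.

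If instead $dy = 0$, the numerator $\starf(\starg(c+dx)) - \starf(\starg(c))$ vanishes outright, so the difference quotient for $f\circ g$ is exactly $0$; on the other hand $\frac{dy}{dx} = 0 \approx g'(c)$ forces $g'(c)=0$ (a standard real infinitely close to $0$ must equal $0$), whence $f'(g(c))\,g'(c)=0$ too. In either case $\frac{\starf(\starg(c+dx)) - \starf(\starg(c))}{dx} \approx f'(g(c))\,g'(c)$ for every non-zero infinitesimal $dx$ with $c+dx\in\starx$; since $c$ is a cluster point of $X$ and $f'(g(c))\,g'(c)\in\mathc$ is standard, this is precisely the hypothesis of Theorem \ref{T: NSD}\textbf{(b)}/\textbf{(c)} for $f\circ g$ at $c$ with $L = f'(g(c))\,g'(c)$, and the conclusion follows.

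I expect the only genuine subtlety to be the degenerate case $dy=0$ — exactly where the classical $\varep$-$\delta$ argument must introduce an auxiliary continuous extension of the difference quotient of $f$ — which here collapses to the one-line remark that a standard real infinitely close to $0$ is $0$. The secondary bookkeeping point is to make sure that every quantity to which the multiplicative property of $\approx$ is applied really lies in $\scf(\starc)$ and that $\starg(c+dx)\in{^{*}Y}$ so that $\starf$ is applicable; both are immediate from differentiability of $g$ at $c$ and Theorem \ref{T: Properties of f-star}.
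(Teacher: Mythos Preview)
Your proof is correct and follows essentially the same route as the paper's: split on whether $\starg(c+dx)=\starg(c)$, and in the non-degenerate case multiply and divide by $\starg(c+dx)-\starg(c)$ and invoke Theorem~\ref{T: NSD} for both $f$ and $g$. The paper's version is considerably terser---it dispatches the degenerate case with the single phrase ``then we are done'' and does not pause to check that $dy\approx 0$ or that the factors lie in $\scf(\starc)$---so your write-up is in fact more complete on exactly the points you flagged as subtle.
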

	\begin{proof}
		We show that for all $x \approx c$ with $x \not= c$, $$\frac{\starf(\starg(x)) - \starf(\starg(c))}{x - c} \approx f'(g(c))g'(c).$$ Indeed, let $x \approx c$. If $\starg(x) = \starg(c)$ then we are done. In the case that they are not equal, by Theorem \ref{T: NSD}, we have $$\frac{\starf(\starg(x)) - \starf(\starg(c))}{x - c} = \left (\frac{\starf(\starg(x)) - \starf(\starg(c))}{\starg(x) - \starg(c)} \right ) \left (\frac{\starg(x) - \starg(c)}{x - c}\right ) \approx f'(g(c))g'(c).$$
	\end{proof}
		
\section{Sequences of Functions}\label{S: Sequences of Functions}
	We now turn our attention to sequences of functions, and we explore what we may say about them in the language of non-standard analysis. We recall what it means for a sequence of functions to converge in standard analysis.

	Let $X \subseteq \mathr$, $S \subseteq X$, $f_n, f : X \to \mathc$, and let $(f_n)$ be a sequence of functions. Recall that $(f_n)$ converges pointwisely to $f$ on $S$ if (by definition),
	\begin{equation}\label{E: SequenceCont}
		(\forall x \in S)(\forall \varep \in \rplus)(\exists K \in \mathn)(\forall n \in \mathn)[n > K \Rightarrow | f_n(x) - f(x) | < \varep].
	\end{equation}
	Also recall that $(f_n)$ converges to $f$ \emph{uniformly} ($(f_n) \rightrightarrows f$) on $S$ if (by definition),
	\begin{equation}\label{E: SequenceUCont}
		(\forall \varep \in \rplus)(\exists K \in \mathn)(\forall x \in S)(\forall n \in \mathn)[n > K \Rightarrow | f_n(x) - f(x) | < \varep].
	\end{equation}

	\begin{theorem}[Convergence of a Sequence of Functions]\label{T: NSCSF}\index{Limit!of a Sequence of Functions}
		Let $X \subseteq \mathr$, $S \subseteq X$, and suppose $f_n, f: X \to \mathc$ with $(f_n)$ a sequence of functions. Then the following are equivalent:
		\begin{quote}
			\begin{description}
				
				\item[(a)] $(f_n)$ converges to $f$ on $S$.
				\item[(b)] $(\forall x \in S)(\forall n \in \starn \setminus \mathn)[\starf_n(x) \approx f(x)]$.
				
			\end{description}
		\end{quote}
	\end{theorem}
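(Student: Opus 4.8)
The plan is to reduce the claim to the already-proved characterization of limits of sequences, Corollary~\ref{C: NSLS}, applied one point at a time. First I would note that both \textbf{(a)} and \textbf{(b)} have the shape $(\forall x \in S)[\,\Phi(x)\,]$: statement \textbf{(a)}, written out via (\ref{E: SequenceCont}), is the result of prefixing $(\forall x \in S)$ to the clause ``$\lim_{n \to \infty} f_n(x) = f(x)$'', and \textbf{(b)} is the result of prefixing the same $(\forall x \in S)$ to ``$(\forall n \in \starn \setminus \mathn)[\starf_n(x) \approx f(x)]$''. Hence it is enough to fix $x \in S$ and prove that these two inner clauses are equivalent; conjoining over all $x \in S$ then gives the theorem.

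So fix $x \in S$ and put $a_n := f_n(x)$ for $n \in \mathn$, a sequence in $\mathc$, and $L := f(x) \in \mathc$. The point that needs care is the meaning of $\starf_n(x)$ for an unlimited index $n = \bra \nu_\varep \ket \in \starn \setminus \mathn$: it is the internal value $\bra f_{\nu_\varep}(x) \ket$, and since $x$ is standard (hence represented by a constant net) this is exactly ${^*a_n}$, the value at $n$ of the non-standard extension of the sequence $(a_n)$. With this identification the clause coming from \textbf{(a)} reads $\lim_{n \to \infty} a_n = L$ and the clause coming from \textbf{(b)} reads $(\forall n \in \starn \setminus \mathn)[{^*a_n} \approx L]$. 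These are equivalent by Corollary~\ref{C: NSLS} applied to $(a_n)$ and $L$, which finishes the fixed-$x$ case and hence the proof.

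The only real obstacle I anticipate is the bookkeeping just mentioned --- namely, checking that the symbol $\starf_n(x)$ appearing in \textbf{(b)} genuinely denotes the non-standard extension, at the point $n$, of the one-variable sequence $n \mapsto f_n(x)$, so that Corollary~\ref{C: NSLS} applies verbatim. Everything else is immediate. Alternatively one could bypass the corollary and argue directly with the ultrafilter, mirroring the proof of Theorem~\ref{T: NSLI}: for fixed $x$ and $\varep \in \rplus$, pick $K \in \mathn$ with $|f_n(x) - f(x)| < \varep$ for all $n > K$, observe that $\{\varep' \in \rplus : \nu_{\varep'} > K\} \in \scu$ for any unlimited $\nu = \bra \nu_{\varep'} \ket$, and for the converse run an overflow argument on the internal set $\{\, n \in \starn : |\starf_n(x) - f(x)| \geq \epn \,\}$; but routing through Corollary~\ref{C: NSLS} is cleaner and avoids duplicating that argument.
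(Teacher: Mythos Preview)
Your proposal is correct and is essentially identical to the paper's own proof: the paper simply writes $(f_n)\to f$ on $S$ iff $(\forall x\in S)[\lim_{n\to\infty}f_n(x)=f(x)]$ iff, by Corollary~\ref{C: NSLS}, $(\forall x\in S)(\forall n\in\starn\setminus\mathn)[\starf_n(x)\approx f(x)]$. Your extra remarks on the identification $\starf_n(x)={^*a_n}$ and the alternative direct ultrafilter argument are fine but not needed, since the paper leaves that identification implicit.
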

	\begin{proof}
		We have $(f_n) \to f$ on $S$ \iff $(\forall x \in S)[\lim_{n \to \infty} f_n(x) = f(x)]$ \iff (by Corollary \ref{C: NSLS}) $(\forall x \in S)(\forall n \in \starn \setminus \mathn)[\starf_n(x) \approx f(x)]$.
	\end{proof}
	
	\begin{theorem}[Uniform Convergence of a Sequence of Functions]\label{T: NSUCSF}\index{Uniform Convergence of a Sequence of Functions}
		Let $X \subseteq \mathr$, $S \subseteq X$, and suppose $f_n, f: X \to \mathc$ with $(f_n)$ a sequence of functions. Then the following are equivalent:
		\begin{quote}
			\begin{description}
				
				\item[(a)] $(f_n)$ converges uniformly to $f$ on $S$.
				\item[(b)] $(\forall x \in \stars)(\forall n \in \starn \setminus \mathn)[\starf_n(x) \approx \starf(x)]$.
				
			\end{description}
		\end{quote}
	\end{theorem}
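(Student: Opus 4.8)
The plan is to prove the two implications separately, in close parallel with the proof of Theorem~\ref{T: NSUC} (uniform continuity); the only genuinely new feature is that the ``point'' now ranges over $\stars$ \emph{and} a second index $n$ ranges over $\starn \setminus \mathn$. First I would pin down what $\starf_n(x)$ means for non-standard $n$: regarding the sequence $(f_n)$ as a single function $F\colon \mathn \times X \to \mathc$ with $F(n,x)=f_n(x)$, its non-standard extension (Section~\ref{S: Functions}) gives $\starf_n(x) = \bra f_{\nu_\varep}(\xep) \ket$ whenever $n = \bra \nu_\varep \ket$ and $x = \bra \xep \ket$; by Lemma~\ref{L: Regular Representatives} I may always assume $\nu_\varep \in \mathn$ and $\xep \in S$ for every $\varep \in \rplus$.

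For \textbf{(a)}$\Rightarrow$\textbf{(b)} I would use the countable reformulation of (\ref{E: SequenceUCont}): given $m \in \mathn$, there is $K \in \mathn$ with $|f_n(x)-f(x)| < \frac1m$ for all $x \in S$ and all $n > K$. Fix $x = \bra \xep \ket \in \stars$ and $n = \bra \nu_\varep \ket \in \starn \setminus \mathn$. Since $n$ is infinitely large and $K$ is standard, $n > K$, which unpacks as $\{\varep \in \rplus : \nu_\varep > K\} \in \scu$; on that set $\xep \in S$ and $\nu_\varep > K$, so $|f_{\nu_\varep}(\xep)-f(\xep)| < \frac1m$. Hence $\{\varep : \nu_\varep > K\} \subseteq \{\varep : |f_{\nu_\varep}(\xep)-f(\xep)| < \frac1m\}$, and the latter set is in $\scu$ by \textbf{(c)} of Definition~\ref{D: Filter and Free Filter}. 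Thus $|\starf_n(x)-\starf(x)| < \frac1m$ for every $m$, i.e.\ $\starf_n(x) \approx \starf(x)$.

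For \textbf{(b)}$\Rightarrow$\textbf{(a)} I would argue by contradiction from the negation of (\ref{E: SequenceUCont}): there is $\epn \in \rplus$ such that for every $K \in \mathn$ there exist $x \in S$ and $n \in \mathn$ with $n > K$ and $|f_n(x)-f(x)| \ge \epn$. To build a net indexed by $\rplus$ I would feed in the canonical infinitely large number $\nu = \bra \nu_\varep \ket$ of Example~\ref{E: Canonical Infinitely Large}: by the Axiom of Choice, pick for each $\varep$ a pair $x_\varep \in S$ and $n_\varep \in \mathn$ with $n_\varep > \nu_\varep$ and $|f_{n_\varep}(x_\varep)-f(x_\varep)| \ge \epn$. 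Then $x =: \bra x_\varep \ket \in \stars$, while $n =: \bra n_\varep \ket$ satisfies $n > \nu$ and is therefore infinitely large, so $n \in \starn \setminus \mathn$; and $|\starf_n(x)-\starf(x)| \ge \epn$ holds for all $\varep$, so $\starf_n(x) \not\approx \starf(x)$, contradicting \textbf{(b)}.

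The direction \textbf{(a)}$\Rightarrow$\textbf{(b)} is routine. The step that needs care is \textbf{(b)}$\Rightarrow$\textbf{(a)}: the negation of uniform convergence supplies, for each bound $K$, \emph{some} index $n > K$ with no control on how it is chosen, so one cannot just assemble those indices into a net whose equivalence class is infinitely large. The fix is to let the bound $K$ itself run to infinity along $\rplus$ through the canonical infinitely large $\nu$, which forces the resulting index net $\bra n_\varep \ket$ to dominate $\nu$ and hence to lie in $\starn \setminus \mathn$. A minor accompanying point is to confirm that $\starf_n(x)$ does not depend on the chosen representatives of $n$ and $x$, which follows exactly as for non-standard extensions of two-variable functions.
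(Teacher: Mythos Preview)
Your argument is correct, but it takes a genuinely different route from the paper's own proof. The paper proves both directions via the Transfer Principle: for \textbf{(a)}$\Rightarrow$\textbf{(b)} it fixes $\varep \in \rplus$, obtains the standard $K$, transfers the bounded statement $(\forall x \in S)(\forall n \in \mathn)[n>K \Rightarrow |f_n(x)-f(x)|<\varep]$ to $\stars$ and $\starn$, and then observes that every $n \in \starn\setminus\mathn$ exceeds the standard $K$; for \textbf{(b)}$\Rightarrow$\textbf{(a)} it again fixes $\varep$, notes that any $K \in \starn\setminus\mathn$ witnesses $(\exists K \in \starn)(\forall x \in \stars)(\forall n \in \starn)[n>K \Rightarrow |\starf_n(x)-\starf(x)|<\varep]$, and transfers this existential back down to $\mathn$ and $S$. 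By contrast, you work entirely at the ultrafilter level, mimicking the proof of Theorem~\ref{T: NSUC}: the forward direction unpacks representatives and chases sets through $\scu$, and the reverse direction builds an explicit counterexample net by feeding the canonical infinitely large $\nu = \bra \nu_\varep \ket$ into the negation and invoking the Axiom of Choice. Your approach is more elementary in that it avoids the Transfer Principle altogether and stays within the constructive framework of Chapter~\ref{C: Preliminaries}; the paper's approach is shorter and illustrates how Transfer collapses the bookkeeping, but at the cost of invoking heavier machinery. Your device of forcing $\bra n_\varep \ket$ to be infinitely large by making $n_\varep > \nu_\varep$ is exactly the right idea and is not needed in the paper's version because Transfer sidesteps the issue of assembling the index net.
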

	\begin{proof}$ $
		\begin{description}
		
			\item[($\Rightarrow$)] Let $(f_n) \to f$ uniformly on $S$. So, (\ref{E: SequenceUCont}) holds. Fix $\varep \in \rplus$. Then there exists $K \in \rplus$ such that $(\forall x \in S)(\forall n \in \mathn)[n > K \Rightarrow |f_n(x) - f(x)| < \varep].$ By the Transfer Principle we obtain, $(\forall x \in \stars)(\forall x \in \starn)[n > K \Rightarrow |\starf_n(x) - \starf(x)| < \varep].$ Certainly if $n \in \starn \setminus \mathn$ then $n > K$ for any $K \in \rplus$. Thus, $(\forall x \in \stars)(\forall n \in \starn \setminus \mathn)[|\starf_n(x) - \starf(x)| < \varep],$ which gives $(\forall x \in \stars)(\forall n \in \starn \setminus \mathn)[\starf_n(x) \approx \starf(x)]$, as desired.
			\item[($\Leftarrow$)] Assume \textbf{(b)}. If $\varep \in \rplus$ (arbitrarily fixed), then $(\forall x \in \stars)(\forall n \in \starn \setminus \mathn)[|\starf_n(x) - \starf(x)| < \varep]$. So trivially, $(\exists K \in \starn)(\forall x \in \stars)(\forall n \in \starn)[n > K \Rightarrow |\starf_n(x) - \starf(x)| < \varep]$ (For instance pick $K \in \starn \setminus \mathn$). Apply the Transfer Principle so that $(\exists K \in \mathn)(\forall x \in S)(\forall n \in \mathn)[n > K \Rightarrow |f_n(x) - f(x)| < \varep]$. As $\varep \in \rplus$ was arbitrary, we have $(f_n) \rightrightarrows f$. 
		
		\end{description}
	\end{proof}
		
	\begin{remark}[Reduction of Quantifiers]\index{Reduction of Quantifiers}
		Comparing (\ref{E: SequenceCont}) with \textbf{(b)} from Theorem \ref{T: NSCSF}, and (\ref{E: SequenceUCont}) with \textbf{(b)} from Theorem \ref{T: NSUCSF} we note the now common result. There are two less quantifiers in the non-standard characterizations, and these quantifiers commute.
	\end{remark}
	
	\begin{theorem}
		Suppose $(f_n)$ is a sequence of continuous functions on $X \subseteq \mathr$ for all $n \in \mathn$, and that $(f_n) \rightrightarrows f$ on $X$. Then $f$ is continuous on $X$.
	\end{theorem}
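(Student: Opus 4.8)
The plan is to run the classical $\varepsilon/3$ argument, but arranged so that only the Transfer Principle together with the non-standard characterizations of continuity (Theorem \ref{T: NSC}) and of uniform convergence (Theorem \ref{T: NSUCSF}) are used. By the Corollary to Theorem \ref{T: NSC}, it is enough to fix $r \in X$ and $x \in \starx$ with $x \approx r$ and prove $\star f(x) \approx \star f(r)$; recall $\star f(r) = f(r)$ since $r \in X$.

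To get $\star f(x) \approx f(r)$ I would show $|\star f(x) - f(r)| < \frac{1}{m}$ for an arbitrary $m \in \mathn$. First apply uniform convergence in its standard form (\ref{E: SequenceUCont}) with tolerance $\frac{1}{3m}$ to obtain $K \in \mathn$ such that $|f_n(y) - f(y)| < \frac{1}{3m}$ for every $y \in X$ and every $n > K$, and fix one \emph{standard} index $n_0 \in \mathn$ with $n_0 > K$. The statement ``$|f_{n_0}(y) - f(y)| < \frac{1}{3m}$ for all $y \in X$'' concerns only the single standard function $f_{n_0} - f$, so the Transfer Principle upgrades it to $|\star f_{n_0}(x) - \star f(x)| < \frac{1}{3m}$; and applied at the standard point $r$ it already gives $|f_{n_0}(r) - f(r)| < \frac{1}{3m}$. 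Since $f_{n_0}$ is continuous at $r$, part \textbf{(b)} of Theorem \ref{T: NSC} gives $\star f_{n_0}(x) \approx f_{n_0}(r)$, hence $|\star f_{n_0}(x) - f_{n_0}(r)| < \frac{1}{3m}$. Now the triangle inequality
$$|\star f(x) - f(r)| \le |\star f(x) - \star f_{n_0}(x)| + |\star f_{n_0}(x) - f_{n_0}(r)| + |f_{n_0}(r) - f(r)| < \frac{1}{m}$$
closes the argument, and since $m \in \mathn$ was arbitrary, $\star f(x) \approx f(r)$; appealing once more to the Corollary to Theorem \ref{T: NSC} we conclude $f$ is continuous on $X$.

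I expect the main obstacle to be resisting a tempting shortcut rather than any real difficulty: one would like to pick an infinitely large $\nu \in \starn \setminus \mathn$, note $\star f(x) \approx \star f_\nu(x)$ and $\star f(r) \approx \star f_\nu(r)$ by Theorem \ref{T: NSUCSF}, and then invoke ``continuity of $f_\nu$'' to get $\star f_\nu(x) \approx \star f_\nu(r)$. But after transfer the internal function $\star f_\nu$ is only \emph{internally} continuous --- for an infinitesimal tolerance the guaranteed $\delta$ may be smaller than $|x - r|$ --- so it need not be continuous in the sense of Theorem \ref{T: NSC}, and the shortcut fails. Keeping a fixed standard index $n_0$ in the middle term of the triangle inequality is precisely what avoids this. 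Equivalently, one can run the same estimate directly on representatives $x = \bra x_\varep \ket$: after fixing $m$ and $n_0$, choose $\delta \in \rplus$ from the continuity of $f_{n_0}$ at $r$, pick $n_1 \in \mathn$ with $\frac{1}{n_1} < \delta$, and intersect $\{ \varep \in \rplus : |x_\varep - r| < \frac{1}{n_1} \} \in \scu$ with the evident index sets; the bookkeeping is identical.
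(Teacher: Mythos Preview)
Your proof is correct. The route, however, differs from the paper's in an instructive way. You keep the middle index \emph{standard}: fix $m$, choose a standard $n_0$ from uniform convergence, transfer the single inequality $|f_{n_0}-f|<\tfrac{1}{3m}$ to $\starx$, and use continuity of the standard function $f_{n_0}$ for the middle term. This is the classical $\varep/3$ argument with one invocation of Transfer, and it needs nothing beyond Theorems \ref{T: NSC} and \ref{T: NSUCSF}.

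The paper instead runs the triangle inequality at an \emph{infinite} index $\nu$, so that all three summands are genuinely infinitesimal and no $\tfrac{1}{m}$ bookkeeping is required. The very obstacle you flag---that for an arbitrary $\nu\in\starn\setminus\mathn$ one cannot conclude $\starf_\nu(x)\approx\starf_\nu(r)$ from $x\approx r$---is handled not by avoiding infinite indices but by \emph{choosing} $\nu$ via Overflow: the internal set $A=\{\,n\in\starn:|\starf_n(x)-\starf_n(r)|<\tfrac{1}{n}\,\}$ contains all of $\mathn$ (by continuity of each $f_n$), hence by Corollary \ref{C: Spilling Principles} contains some infinitely large $\nu$, and for that particular $\nu$ the middle term is $<\tfrac{1}{\nu}\approx 0$. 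So the ``shortcut'' you warn against does not fail outright; it fails only for a \emph{generic} $\nu$, and Overflow supplies a good one. Your approach buys elementarity (no Spilling Principles); the paper's buys a cleaner nonstandard estimate with no auxiliary $\varep$.
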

	\begin{proof}
		Let $x \in \starx$ and $c \in X$ with $x \approx c$. Our goal is to show $\starf(x) \approx f(c)$. By assumption, for all $n \in \mathn$, $\starf_n(x) \approx f_n(c)$. 
		
		We claim there exists $\nu \in \starn \setminus \mathn$ such that $\starf_\nu(x) \approx \starf_\nu(c)$. Indeed, consider $A =: \{ n \in \starn : | \starf_n(x) - \starf_n(c)| < \frac{1}{n} \}$. We know $A \not= \varnothing$ since $\mathn \subseteq A$ (by assumption). So, $A$ contains arbitrarily large finite numbers, and by the Overflow Principle (Corollary \ref{C: Spilling Principles}), $A$ contains an infinitely large $\nu$.
		
		Having established $\nu \in \starn \setminus \mathn$ such that $\starf_\nu(x) \approx \starf_\nu(c)$ we have $$|\starf(x) - f(c)| \leq |\starf(x) - \starf_\nu(x)| + |\starf_\nu(x) - \starf_\nu(c) + |\starf_\nu(c) - f(c)|.$$ The first and third summands are infinitesimal since $\nu \in \starn \setminus \mathn$ and $(f_n) \rightrightarrows f$, and the second summand is infinitesimal by the above claim. Hence, $|\starf(x) - \starf(c)|$ is infinitesimal. 
	\end{proof}
		
	\begin{examples}$ $
	\begin{quote}
		\begin{description}
		
			\item[(i)] Consider $(f_n) = (x^n)$. Notice that $f_n(x)$ converges to $0$ on $[0, 1)$. We first show that $(f_n)$ is not uniformly convergent on $[0,1)$. Let $\nu \in \starn \setminus \mathn$, and define $x =: 1 - \frac{1}{\nu} \in {\star[0,1)}$. Then $x^\nu = (1 - \frac{1}{\nu})^\nu \approx \lim_{n \to \infty} (1 - \frac{1}{n})^n = e^{-1} \not= 0$. Therefore, $x^\nu \not\approx 0$ and so $(f_n)$ is not uniformly convergent on $[0, 1)$.
			\item[(ii)] Consider $(f_n) = (x^n)$, on the interval $[0, \delta)$ for any $0 < \delta < 1$. We claim that $(f_n) \rightrightarrows 0$ on $[0, \delta)$. Note ${\star[0, \delta)} = \{ x \in \starr : 0 \leq x < \delta\}$ where $\delta$ is standard. So $0 \leq x < \delta < 1$ giving $0 \leq x^n < \delta^n$, hence $0 \leq x^{\nu} < \delta^\nu \approx 0$. Therefore, $x^\nu \approx 0$. Giving $x^n \rightrightarrows 0$ on $[0, \delta)$.
			\item[(iii)] Consider $(f_n) = (\frac{1}{n}\sin(nx))$. We show that $(f_n)$ is uniformly convergent to $0$ on $\mathr$. Note that $|{^*\sin(nx)}| \leq 1$ for all $n \in \starn$ and $x \in \starr$. Choose $x \in \starr$ and $n \in \starn \setminus \mathn$. Then $|\frac{1}{n}{^*\sin(nx)}| \leq \frac{1}{n} \approx 0$. As $x$ and $n$ were arbitrary, $(f_n) \rightrightarrows 0$ on $\mathr$ by Theorem \ref{T: NSUCSF}.
			\item[(iv)] Consider $(f_n) = (e^{-(x-n)^2})$. We illustrate the subtle difference between the convergence and uniform convergence of a sequence of functions.
			
			We first show that $(f_n)$ converges to $0$ on $\mathr$. Let $x \in \mathr$ and $\nu \in \starn \setminus \mathn$ be arbitrarily chosen and fixed. Then $e^{-(x-\nu)^2} \approx 0$ since $\nu$ is infinitely large while $x$ is finite, therefore $(x - \nu)$ is infinitely large. Squaring it only improves our situation.
			
			We now show that $(f_n)$ is not uniformly convergent to $0$ on $\mathr$. Let $\nu \in \starn \setminus \mathn$ be arbitrarily chosen and fixed. Then let $x = \nu \in \starr$ so that $e^{-(x - \nu)^2} = e^{-(\nu - \nu)^2} = e^0 = 1 \not\approx 0$. Therefore, $(f_n)$ is not uniformly convergent to $0$ on $\mathr$.
			
			The key to ordinary convergence was that $x$ was only allowed to be \emph{finite}, while in uniform convergence $x$ had to be non-standard. Therefore, $x$ could, in effect, keep up with $\nu \in \starn \setminus \mathn$.
			\item[(v)] Consider $(f_n) = (\frac{1}{n}e^{-(x-n)^2})$. We show that $(f_n)$ is uniformly convergent to $0$ on $\mathr$. We know that for all $x \in \starr$ and for all $\nu \in \starn \setminus \mathn$, $e^{-(x-\nu)^2} \leq 1$. Now, let $x \in \starr$ and $\nu \in \starn \setminus \mathn$ be arbitrarily chosen and fixed. Then $\frac{1}{\nu}e^{-(x-\nu)^2} \leq \frac{1}{\nu} \approx 0$. So by Theorem \ref{T: NSUCSF}, $(f_n) \rightrightarrows 0$ on $\mathr$.
					
		\end{description}
	\end{quote}
	\end{examples}

	\printindex
	
\end{document}